\documentclass[10pt]{amsart}
\usepackage{amsmath, amsrefs, accents, amsfonts, amscd, amssymb, amsthm,
  verbatim, psfrag, dsfont,enumerate,url, color, pdfsync, hyperref} 
\usepackage[normalem]{ulem} 
\usepackage{marginnote}
\usepackage{bbding}
\newtheorem{conjecture}{Conjecture}

\newtheorem{lemma}{Lemma}[section]
\newtheorem{proposition}[lemma]{Proposition}
\newtheorem{theorem}{Theorem}

\newtheorem{definition}[lemma]{Definition}

\newtheorem{question}[lemma]{Question}

\begin{document}
\title[Almost Existence from the Feral Perspective]{Almost Existence from the Feral Perspective and Some Questions}
\author[J.W.~Fish]{Joel W.~Fish}
\thanks{The first author's research in development of this manuscript was
supported in part by NSF-DMS Research Grant Award
1610452}

\author[H. ~Hofer]{Helmut H. W. Hofer}
\date{\today}
\address{
	Joel W.~Fish\\
	Department of Mathematics\\
        University of Massachusetts Boston
	}

\email{joel.fish@umb.edu}
\address{
	Helmut Hofer\\
        School of Mathematics,
	Institute for Advanced Study
	}

\email{hofer@math.ias.edu}

\keywords{feral curves, almost existence, adiabatic} 
\maketitle


\tableofcontents

\newcounter{CurrentSection}
\newcounter{CounterSectionAdiabatic}
\newcounter{CounterPropositionAdiabatic}
\newcounter{CurrentLemma}
\newcounter{CounterLemmaNAME}
\newcounter{CurrentProposition}

\newcounter{CurrentTheorem}
\newcounter{CounterTheoremMain}
\newcounter{CounterSectionMain}

\newcounter{CounterTheoremIntertwine}
\newcounter{CounterSectionIntertwine}

\section{Introduction and Historical Background}\label{SEC_intro}
This paper is concerned with the ``\textit{almost existence}" phenomenon
  for periodic orbits of Hamiltonian dynamical systems.  
We shall describe some of the background of this  phenomenon,
  and we relate it to the new feral curve theory, \cite{FH2}, which was
  recently initiated by the authors.
The new approach to the  ``\textit{almost existence}" phenomenon suggests
  a larger context which also features questions around the
  \(C^{\infty}\)-closing lemma in the following sense. 
Consider a compact symplectic manifold \((W,\Omega)\) equipped
  with a smooth Hamiltonian \(H\colon W\rightarrow {\mathbb R}\).
Given a regular energy surface $H^{-1}(E)$, one can ask the question:
  Is it the case that  after a small smooth perturbation of $H$ the new
  $H'^{-1}(E)$ has the property that the set of periodic orbits are dense?
This is a classical question, and the $C^1$-closing lemma shows that the
  above assertion is true for a $C^2$-small perturbation of the Hamiltonian,
  i.e. for a $C^1$-close Hamiltonian vector field.  
Whereas the $C^1$-closing lemma holds in general, it is known that the
  $C^{\infty}$-closing lemma does not; see \cite{Herman:1991}.
We believe that the analysis below contributes to a growing body of
  evidence which suggests that the validity of the Hamiltonian
  \(C^\infty\)-closing lemma is intimately connected to the existence of a
  sufficiently rich Gromov-Witten theory of the ambient space.
We touch on this speculation in Section \ref{SEC_speculation}.

The main goal of this paper is to set up a body of results to utilize the
  Feral Curve theory to study questions around almost existence and the
  closing lemma.
Our results show that given a compact pile of Hamiltonian energy surfaces,
  a sufficient supply of pseudoholomorphic curves associated to this pile
  implies ``almost existence"; see Theorem \ref{THM_existence_and_almost_existence}.
This is always attainable provided the pile of energy surfaces can be
  viewed as lying in a compact symplectic manifold with a sufficient supply
  of pseudoholomorphic curves.
As an exercise the reader might enjoy using our more local results to
  reprove the almost existence result for regular compact energy surfaces in
  ${\mathbb R}^{2n}$ by  using the large supply of pseudoholomorphic curves
  obtained as a deformation of holomorphic curves in ${\mathbb C\mathbb
  P}^n$.
Building on previous work in \cite{FH2}, we also show that under suitable
  topological constraints, the almost existence result can be improved to
  the following: Every energy level contains a non-trivial closed invariant
  subset, and for almost all of these energy levels this set is a periodic
  orbit;  see Theorem \ref{THM_existence_and_almost_existence}.
For the convenience of the reader we recall the necessary background.

\subsection{The Weinstein Conjecture}
As a starting point for our discussion, we consider two seminal papers by P.
  Rabinowitz, namely \cite{Rab:78} and \cite{Rab}, which are concerned
  with the existence of a periodic orbit on a given compact regular
  energy surface 
  \begin{align*}                                                          
    M=H^{-1}(E) 
    \end{align*}
  for an autonomous Hamiltonian system defined on the standard
  phase space ${\mathbb R}^{2n}$.
Rabinowitz showed  the existence of periodic orbits whenever suitable
  geometric conditions are met.
We refer the reader to the introduction of \cite{Abbas-Hofer} for the
  interesting broader historical perspective of Rabinowitz' results.
 
A. Weinstein analyzed these results, particularly \cite{Rab},  and
  proposed the far-reaching Weinstein Conjecture in \cite{Weinstein:79},
  which we shall describe momentarily.
First though, we provide some definitions.  
Consider an odd-dimensional smooth manifold \(M^{2n+1}\),
  with a one-form \(\lambda\) for which \(\lambda\wedge (d\lambda)^n\) is
  a volume form.
In this case, we call \(\lambda\) a contact form for \(M\), which then
  uniquely determines a vector field \(X\) by the following equations:
  \begin{align*}                                                          
    \lambda(X)\equiv 1\qquad\text{and}\qquad i_X d\lambda \equiv 0.
    \end{align*}
In this case, we call \(X\) the \textit{Reeb vector field} associated to the
  contact form \(\lambda\).
In a modified form, the Weinstein Conjecture states the following.\\

\noindent{\textbf{Weinstein Conjecture (1978):}} \\
\noindent \emph{
Let \(M\) be a smooth closed odd-dimensional manifold equipped with a
  contact form and an associated Reeb vector field \(X\).
Then the dynamical system given by
  \begin{align*}                                                          
    \dot{x}=X(x),
    \end{align*}
  has a nontrivial periodic orbit.\\
}

The first breakthrough concerning the Weinstein Conjecture was Viterbo's
  celebrated result in \cite{Viterbo:1987}, which showed that a regular
  compact contact-type energy surface of a Hamiltonian system in ${\mathbb
  R}^{2n}$ carries a nontrivial periodic orbit.
In 1993 Hofer, in \cite{Hofer:1993}, showed that for a contact form
  $\lambda$ on a closed three-manifold $M$ the Weinstein conjecture is true
  provided at least one of the following holds.
Either $M=S^3$, or $\pi_2(M)\neq 0$, or $\lambda$ is overtwisted.
In 2007 Taubes proved in \cite{Taubes:2007} that the Weinstein conjecture in
  dimension three is true for all $(M,\lambda)$.
  
A natural question then becomes whether or not a generalization of this
  result holds for more general compact energy surfaces in
  \(\mathbb{R}^{2n}\).
However, if $n\geq 3$ results in \cite{Ginzburg:1995} and \cite{Her0} show
  that periodic orbits might not exist on a given energy surface.
There is also a result for $n=2$, see V. Ginzburg and B. G{\"u}rel,
  \cite{GG}, and thus it becomes interesting to study, in some sense, how
  often and how the generalization fails.
This is the content of Section \ref{SEC_almost_existence}.

\subsection{Almost Existence}\label{SEC_almost_existence}
By analyzing Viterbo's paper, the second author and E. Zehnder established
  in \cite{HZ:87} that many compact Hamiltonian energy levels contain
  periodic orbits.
Indeed, after some refinement by Rabinowitz in \cite{Rab:87} and Struwe in
  \cite{Struwe:90}, this phenomenon became referred to as ``\textit{almost
  existence}''.
Working in a context in which the almost existence phenomenon holds, the
  actual existence question
  for periodic orbits of Hamiltonian systems quite often can be phrased as
  ``a priori estimates imply existence", see \cite{BHR}.
Finally the phenomenon was explained in \cite{HZ} in terms of
  differentiability properties of the so-called Hofer-Zehnder capacity, see also
  \cite{HZ:1990}.

For our explicit purpose of connecting this topic to feral curve theory,
  we approach the subject in a particular way.
In a first definition we give an abstraction of a regular, smooth and
  compact Hamiltonian energy surface in a symplectic manifold, which we call
  a ``\textit{framed Hamiltonian manifold}''.
By forgetting some of the information carried by a framed Hamiltonian
  manifold we obtain what is called an ``\textit{odd-symplectic manifold}''.

\begin{definition}[framed Hamiltonian manifold and odd-symplectic]
  \label{DEF_framed}
  \hfill\\
A framed Hamiltonian manifold $(M,\lambda,\omega)$ consists of a
  smooth closed odd-dimen\-sional manifold $M=M^{2n+1}$, a two-form $\omega$
  and a one-form $\lambda$ such that $\lambda\wedge \omega^{n}$ is a volume
  form.
When such a \(\lambda\) exists but we only specify \((M, \omega)\), we
  call the pair an odd-symplectic manifold.
\end{definition}
%
A framed Hamiltonian manifold  $(M,\lambda,\omega)$ defines a dynamical
  system.
Namely there exists a non-singular vector field $X$ on $M$ uniquely
  characterized by the equations
\[
i_X\lambda\equiv 1\ \ \textrm{and}\ \ \ i_X\omega\equiv 0.
\]
As in the more special contact case we shall refer to $X$ as the
  \textit{Reeb vector field} (associated to $(M,\lambda,\omega)$).

Assume that we are given a symplectic manifold $(W,\Omega)$ without
  boundary and  consider a compact, smooth, regular and co-oriented
  hypersurface $M$ in \(W\).
Denoting by $\iota:M\rightarrow W$ the inclusion, we abbreviate $\omega=
  \iota^{\ast}\Omega$ and obtain the odd-symplectic manifold $(M,\omega)$.
If  $H:W\rightarrow {\mathbb R}$ is a smooth Hamiltonian and $H^{-1}(E)=M$
  for some number $E\in {\mathbb R}$ and $dH(m)\neq 0$ for $m\in M$ we can
  take a one-form $\lambda$ on $M$ such that
  $\lambda(X_H(m))=1$ for $m\in M$.  
Then $\lambda\wedge\omega^{n}$ is a volume form on $M$.
Hence we obtain a framed Hamiltonian manifold $(M,\lambda,\omega)$. 
One easily verifies that the Reeb vector field $X$ satisfies 
\[
X_H(m)=X(m)\ \ \textrm{for all}\ m\in M,
\]
  where $X_H$ is the Hamiltonian vector field associated to $H$ and
  defined by $i_{X_H}\Omega=-dH$.

In order to study the almost existence phenomenon we also need to consider
  neighborhoods of a smooth, regular and compact energy surface.
Given a co-orientable, compact, smooth and regular hypersurface $M$
  contained in $W$, where $(W,\widetilde{\Omega})$ is a symplectic
  manifold, we obtain an odd-symplectic manifold $(M,\omega)$ as previously
  described. 
Namely we take the inclusion $i:M\rightarrow W$ and define
  $\omega:=i^{\ast}\widetilde{\Omega}$.

We fix a one-form $\lambda$ such that $\lambda\wedge \omega^n$ is a volume
  form on $M$.
We can define on ${\mathbb R}\times M$ with coordinates $(t,m)$ a 2-form
  $\Omega$ by
\[
\Omega =d(t\lambda)+\omega.
\]
It is a trivial exercise that there exists an open neighborhood $U$ of
  $M\equiv \{0\}\times M$ such that $\Omega|U$ is a symplectic form.
Moreover, if we take $U$ small enough we find an embedding
  $\psi:U\rightarrow W$ onto an open neighborhood of $M\subset W$
  such that $\psi^{\ast}\widetilde{\Omega}= \Omega|U$ and in addition
  $\psi(0,m)=m$ for all $m\in M$.
Note that this also implies that given $\lambda_1$ and $\lambda_2$ so that
  $\lambda_i\wedge \omega^n$ are volume forms we find for the corresponding
  $\Omega_i$ defined by 
\[
\Omega_i =d(t\lambda_i)+\omega,
\]
  open neighborhoods $U_1$ and $U_2$ of $M\subset {\mathbb R}\times M$
  such that $\Omega_i|U_i$ are symplectic and there exists a symplectic
  diffeomorphism $\psi:U_1\rightarrow U_2$, which is the identity on
  $M=\{0\}\times M$.
Depending on the case, whether or not $\lambda_1\wedge \omega^n$ or
  $\lambda_2\wedge \omega^n$ define the same orientation on $M$, we must
  have that $\psi$ maps $(s,m)$ for some $s>0$ to some $(s',m')$ with $\pm
  s'>0$.

From the previous discussion it follows that instead of working with a
  suitable open neighborhood $U$ of $M$ we may assume that $U={\mathbb
  R}\times M$ equipped with a symplectic form $\Omega$ such that
  $\omega=\iota^{\ast}\Omega$, where $\iota(m)=(0,m)$ for $m\in M$.
\begin{definition}[compatible $\Omega$]
  \label{DEF_compatible_Omega}
  \hfill\\
Given an odd-symplectic $(M,\omega)$ we call a symplectic form $\Omega$ on
  ${\mathbb R}\times M$ \textit{compatible} provided
  $\iota^{\ast}\Omega=\omega$, where $\iota:M\rightarrow {\mathbb R}\times
  M$ is defined by $\iota(m)=(0,m)$ for $m\in M$.
\end{definition}
%

The notion of \textit{``almost existence''} will be associated to the
  behavior of small neighborhoods of $M=\{0\}\times M$ in ${\mathbb R}\times
  M$ for a symplectic form which restricts to $\omega$.

\begin{definition}[almost existence property - odd symplectic]
  \label{DEF_almost_existence_odd_symplectic}
  \hfill\\
Consider a smooth, compact odd-symplectic manifold $(M,\omega)$, denote by
  $\Omega$ a compatible symplectic form  on ${\mathbb R}\times M$ and
  identify $M\equiv \{0\}\times M$.
We say that $(M,\omega)$ has the \textit{almost existence property}
  provided there exists an open neighborhood $U$ of $M$ with the following
  property.
Given any  proper, smooth and surjective Hamiltonian $H:V\rightarrow
  (-\varepsilon,\varepsilon)$ for some $\varepsilon>0$,  where $V$ is an
  open neighborhood of $M$ contained in $U$  such that $H^{-1}(0)=\iota(M)$
  and $dH(s,m)\neq 0$ for $m\in M$ and $s\in (-\varepsilon,\varepsilon)$
  define the  set $\Sigma_H$ by
\[
\Sigma_H=\{E\in (-\varepsilon,\varepsilon)\ \vert\ \dot{x}=X_H(x)\ 
\textrm{has a periodic orbit with}\ H(x)=E\}.
\]
We say that $(M,\omega)$ has the \textit{almost existence property}
  provided for suitable $U$ it holds for all $H$ as described above that
  $\textrm{measure}(\Sigma_H)=2\cdot\varepsilon$.
\end{definition}
%

Finally we can introduce a special class of symplectic manifolds.

\begin{definition}[almost existence property -- symplectic manifold]
  \label{DEF_almost_existence_property_symplectic}
  \hfill\\
A symplectic manifold $(W,\Omega)$ without boundary has the almost
  existence property provided for every regular, compact, and co-oriented
  hypersurface $M$ the pair $(M,\iota^{\ast}\Omega)$ has the almost existence
  property, where  $\iota:M\rightarrow W$ is the inclusion.
\end{definition}
%

We know that the standard symplectic vector space ${\mathbb R}^{2n}$ has
  the almost existence property, see \cite{HZ:1990}.
We also know that $T^4$ can be be equipped with a symplectic form so that
  $(T^4,\Omega)$ does not have the almost existence property, see also
  \cite{HZ:1990}.
In other words, the almost existence property is nontrivial.
We state the following  theorem for the convenience of the reader. 
It is based on some known facts which we identify as a local property.

\begin{theorem}[local almost existence property]
  \label{THM_SOMETHING}\label{th1}
  \hfill\\
Every symplectic manifold $(W,\Omega)$ without boundary has the following
  property.
Given a point $w\in W$ there exists an open neighborhood $U(w)$ so that
  for every closed regular hypersurface  $M\subset U$ the pair
  $(M,i^{\ast}\Omega)$ has the almost existence property.
\end{theorem}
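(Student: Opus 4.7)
The plan is to reduce to the known case of $(\mathbb{R}^{2n},\omega_0)$ via Darboux's theorem and transport the almost existence property through a symplectomorphism.

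First, given $w\in W$, apply Darboux's theorem to obtain an open neighborhood $U(w)\subset W$ of $w$ and a symplectomorphism $\psi\colon U(w)\to\psi(U(w))\subset\mathbb{R}^{2n}$, where $\mathbb{R}^{2n}$ carries its standard symplectic form $\omega_0$. Shrinking $U(w)$ if necessary, we may assume $\psi(U(w))$ is a bounded open subset.

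Next, let $M\subset U(w)$ be any closed regular co-oriented hypersurface. Then $\widetilde{M}:=\psi(M)$ is a closed regular co-oriented hypersurface in $\mathbb{R}^{2n}$, and $\psi$ restricts to a diffeomorphism $M\to\widetilde{M}$ which intertwines the odd-symplectic forms, i.e.\ $(\psi|_M)^{\ast}(\widetilde{\iota}^{\ast}\omega_0)=\iota^{\ast}\Omega$, where $\widetilde{\iota}\colon\widetilde{M}\hookrightarrow\mathbb{R}^{2n}$. Since the discussion preceding Definition \ref{DEF_compatible_Omega} shows that any compatible $\Omega$ on $\mathbb{R}\times M$ is unique up to symplectomorphism fixing $M$, the almost existence property of $(M,\iota^{\ast}\Omega)$ depends only on the odd-symplectic data. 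Therefore it suffices to verify the almost existence property for $(\widetilde{M},\widetilde{\iota}^{\ast}\omega_0)$ using the ambient symplectic form of $\mathbb{R}^{2n}$ itself as the compatible $\Omega$.

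The final step invokes the Hofer-Zehnder result \cite{HZ:1990} that $(\mathbb{R}^{2n},\omega_0)$ has the almost existence property in the sense of Definition \ref{DEF_almost_existence_property_symplectic}. Applied to $\widetilde{M}$, this yields an open neighborhood $\widetilde{V}$ of $\widetilde{M}$ in $\mathbb{R}^{2n}$ such that every proper surjective Hamiltonian $\widetilde{H}\colon\widetilde{V}'\to(-\varepsilon,\varepsilon)$ with $\widetilde{V}'\subset\widetilde{V}$, $\widetilde{H}^{-1}(0)=\widetilde{M}$, and $d\widetilde{H}$ non-vanishing, has periodic orbits on an energy set of full measure $2\varepsilon$. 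Pulling back any such Hamiltonian $H$ on a neighborhood of $M$ in $W$ via $\psi$, the correspondence $X_{H\circ\psi^{-1}}=\psi_{\ast}X_H$ gives a bijection between periodic orbits preserving energy levels, so the full-measure conclusion transfers back to $M$ in $W$.

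The argument involves no real obstacle; the only points worth care are (i) checking that the neighborhood $U(w)$ in which $M$ lives determines, via $\psi$, the same compatible $\Omega$ up to symplectomorphism as the model neighborhood of $\widetilde{M}$ in $\mathbb{R}^{2n}$, and (ii) confirming that the class of admissible Hamiltonians (proper, surjective onto $(-\varepsilon,\varepsilon)$, with $M$ as regular zero set) is preserved under pullback by $\psi$. Both are formal consequences of $\psi$ being a diffeomorphism, and the theorem follows.
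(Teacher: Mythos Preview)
Your proof is correct and is precisely the argument the paper has in mind: the authors do not give a proof of Theorem \ref{th1} at all, but simply state it ``for the convenience of the reader'' as ``based on some known facts which we identify as a local property,'' the known fact being the almost existence result for $(\mathbb{R}^{2n},\omega_0)$ from \cite{HZ:1990}. Your Darboux-then-transport argument is the canonical way to localize that global result, and your care with points (i) and (ii) is appropriate; there is nothing to add.
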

%
In other words, every symplectic manifold without boundary has the "local
  almost existence property". 
It then becomes an interesting question what kind of more global compact,
  regular hypersurfaces in $(W,\Omega)$ have the almost existence property.
We formulate a more precise question next.

\begin{question}
  \label{Q_SOMETHING}
Assume that $(W,\Omega)$ is a symplectic manifold without boundary. 
Suppose that $M\subset W$ is a smooth, compact, regular  hypersurface
  without boundary so that the inclusion $\iota: M\rightarrow W$ is isotopic
  to a small hypersurface, i.e. contained in some $U(w)$, see Theorem
  \ref{th1}.
Does then $M$ have the almost existence property? 
If this is not always true, then for which class of symplectic manifolds
  (other than \(\mathbb{R}^{2n}\)) is it true?
\end{question}
%

We note that the literature suggests that compact symplectic manifolds
  with a sufficiently rich Gromov-Witten theory have the almost existence
  property.
The first paper,  predating Gromov-Witten theory, where such an idea  is
  used is \cite{HOFV}. 
It shows that having a suitable moduli space of pseudoholomorphic curves
  associated to a symplectic manifold implies that the Weinstein
  conjecture holds for suitable energy surfaces. 
The method in \cite{HOFV} was then used  
  in \cite{LiuT} and combined with Gromov-Witten
  theory for a more convenient packaging of properties of moduli spaces.
This Gromov-Witten style approach has also been used to prove almost
  existence results in certain cases; see for example \cite{Lu}, Theorem 1.10.

\subsection{Statement of the Main Result}
Here we state the main result of the article.  
The terms used below are standard, however they are provided explicitly in
  Section \ref{SEC_background} below.
For example, the notion of an \(\Omega\)-tame almost complex structure is
  provided in Definition \ref{DEF_almost_complex_structures}; the notion of
  a pseudoholomorphic map is provided in Definition
  \ref{DEF_pseudoholomorphic_map}; and the notion of the genus of such a map
  is provided in Definition \ref{DEF_genus}.

\setcounter{CounterSectionMain}{\value{section}}
\setcounter{CounterTheoremMain}{\value{theorem}}
\begin{theorem}[Main Result]
  \label{THM_main_maybe}
  \hfill\\
Let \((W, \Omega)\) be a symplectic manifold without boundary, and let
  \(H\colon W\to \mathbb{R}\) be a smooth proper\footnote{By proper here, we
  mean that for each compact set \(\mathcal{K}\subset \mathbb{R}\), the set
  \(H^{-1}(\mathcal{K})\) is compact.} Hamiltonian.
Fix \(E_-, E_+ \in H(W) \subset \mathbb{R}\) with \(E_- < E_+\), as well
  as positive constants, \(C_g>0\), and \(C_\Omega>0\).
Suppose that for each \(\Omega\)-tame almost complex structure \(J\) on
  \(W\) there exists a proper pseudoholomorphic map
  \begin{align*}                                                          
    u:(S,j)\to \{p\in W : E_- < H(p) < E_+ \} 
    \end{align*}
  without boundary, which also satisfies the following conditions:
  \vspace{5pt}
\begin{enumerate}                                                         
  \item \emph{({genus and area bounds})}
  The following inequalities hold:
  \begin{align*}                                                          
    {\rm Genus}(S) \leq C_g\qquad\text{and}\qquad \int_{S} u^*\Omega
    \leq C_\Omega.
    \end{align*}
  \item \emph{({energy surjectivity})}
  The map \(H\circ u : S\to (E_- , E_+)\) is
  surjective.\vspace{5pt}
  \end{enumerate}
Then there is a periodic Hamiltonian orbit on almost every energy level in
  range \((E_-, E_+)\).
That is, if we let \(\mathcal{I}\subset (E_-, E_+)\) denote the energy
  levels of \(H\) which contain a Hamiltonian periodic orbit, then
  \(\mathcal{I}\) has full measure:
  \begin{align*}                                                          
    \mu(\mathcal{I}) = \mu\big((E_-, E_+)\big) = E_+ - E_-.
    \end{align*}
\end{theorem}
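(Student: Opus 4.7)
The plan is to reduce the theorem to a uniform upper bound on the Hofer--Zehnder capacity of small neighborhoods of the energy levels $M_E := H^{-1}(E)$ for $E \in (E_-, E_+)$. Once such a bound is in place --- depending only on $C_\Omega$, $C_g$, and the local symplectic geometry --- the classical differentiability argument of Hofer--Zehnder (see \cite{HZ:1990}) yields almost existence on each compact subinterval of $(E_-, E_+)$ and hence on the whole interval. So the core task is: for each $E_0 \in (E_-, E_+)$, exhibit a neighborhood $U_{E_0}$ of $M_{E_0}$ and a constant $C_0$ such that any Hofer--Zehnder admissible Hamiltonian $F \colon U_{E_0} \to [0,\infty)$ with $\max F > C_0$ must carry a non-constant periodic orbit of period at most $1$.

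To produce such an orbit from the hypothesized pseudoholomorphic curves, I would use a neck-stretching strategy along $M_{E_0}$. Using the framed Hamiltonian structure described in the lead-up to Definition \ref{DEF_compatible_Omega}, trivialize $U_{E_0} \cong (-\delta, \delta) \times M_{E_0}$ with compatible symplectic form, and build a one-parameter family of $\Omega$-tame almost complex structures $\{J_T\}_{T \ge 0}$ on $W$ that equal a fixed $J_0$ off $U_{E_0}$ and, inside $U_{E_0}$, elongate the collar direction by a factor $T$ in a manner adapted to the framed Hamiltonian. For each $T$ the hypothesis of the theorem supplies a pseudoholomorphic curve
\begin{equation*}
u_T \colon (S_T, j_T) \to \{E_- < H < E_+\}
\end{equation*}
with $\textrm{Genus}(S_T) \le C_g$, $\int_{S_T} u_T^* \Omega \le C_\Omega$, and $H \circ u_T$ surjective onto $(E_-, E_+)$. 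The surjectivity forces $u_T$ to meet $M_{E_0}$, so a definite amount of $u_T$ is trapped inside the stretched collar.

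Next I would pass to the limit $T \to \infty$ using the compactness theory for feral curves of \cite{FH2}. The bounded genus and area, together with the forced intersection with the elongating collar, produce a non-trivial limit building whose neck components live in a symplectization-type model on $\mathbb{R} \times M_{E_0}$. Coupling the neck-stretching to the graph of $F$ --- concretely, replacing $H$ by $H - F$ so that the relevant hypersurface through the neck has Reeb flow given by $X_F$ --- the asymptotic analysis for feral limits detects a closed $X_F$-trajectory whose period is controlled linearly by the $\Omega$-area, and hence by $C_\Omega$. Taking $C_0$ sufficiently large relative to $C_\Omega$, $C_g$ and the local geometry of $U_{E_0}$, any admissible $F$ with $\max F > C_0$ then yields a periodic orbit of period $\le 1$, contradicting admissibility and bounding the capacity.

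The principal obstacle is the compactness step in the non-contact framed Hamiltonian setting. Standard SFT compactness does not apply directly, since $J_T$ need not be cylindrical on an honest symplectization, and a feral limit may a priori be asymptotic to an entire closed invariant subset rather than to a single periodic orbit. The crux of the proof is therefore to exploit the area bound $C_\Omega$ together with the structure of the perturbation $F$ so that the feral-compactness output is upgraded from ``closed invariant subset'' to a genuine closed $X_F$-orbit of controlled period. With that bridge in place, the passage from the capacity bound to almost existence is classical, and yields $\mu(\mathcal{I}) = E_+ - E_-$ as claimed.
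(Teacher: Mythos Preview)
Your proposal follows the Liu--Tian/Lu paradigm: bound the Hofer--Zehnder capacity of a neighborhood of each regular level by stretching the neck along a single hypersurface and reading off a fast $X_F$-orbit from the limit building, then invoke the classical differentiability argument. The paper explicitly avoids this route. Its proof uses no capacities at all: it performs an \emph{adiabatic degeneration}, i.e.\ it stretches the neck along the entire continuum of $H$-levels in $(-\epsilon,\epsilon)\times M$ simultaneously, then uses the coarea formula and Fatou's lemma applied to the slice functions $F_m(t)=\int_{(s\circ u_{k_m})^{-1}(t)} u_{k_m}^*\hat\lambda$ to isolate a full-measure set of levels at which the local $\hat\omega$-area tends to zero. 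At such a level, target-local Gromov compactness produces a nontrivial limit curve whose image is contained in a single trajectory cylinder, and a maximum-principle argument forces that trajectory to be periodic. The measure-zero exceptional set is produced by the Fatou step, not by any capacity argument.

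The gap in your approach is exactly the step you flag as ``the crux'': upgrading the feral limit from a closed invariant subset to a genuine periodic orbit with controlled period. When you stretch along a \emph{single} framed-Hamiltonian (non-contact) hypersurface, SFT compactness does not apply, and the feral compactness of \cite{FH2} only yields closed invariant subsets in general; there is no known mechanism to extract a periodic orbit from the area bound alone at a prescribed level. Indeed, if such an upgrade held level-by-level one would be proving existence rather than almost existence. There is also a second issue: the hypothesis gives energy-surjectivity for $H$, not for your auxiliary Hamiltonian $F$, so it is not clear that the curves $u_T$ are forced to cross the particular $F$-level along which you stretch; your ``replace $H$ by $H-F$'' sentence does not address this, since the characteristic flow on a hypersurface is determined by the hypersurface and $\Omega$, not by which function cuts it out. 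The paper sidesteps both problems by never fixing a single level: stretching along \emph{all} levels at once lets the area bound, distributed via coarea and Fatou, pick out the good levels automatically, and at those levels the vanishing of the local $\hat\omega$-area is precisely what forces the limit curve into a single orbit cylinder.
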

%
At this point there are a few points worth making.
The first is that the pseudoholomorphic maps in question here need not be
  compact -- in fact, a careful inspection of the requirements reveals
  that they cannot be compact.

A second point is that the pseudoholomorphic maps allowed by the above
  hypotheses may have domains \((S, j)\) which are diffeomorphic to an open   annulus, but sometimes the domains will be much worse.
For example, our assumptions allow for the possibility that a domain of a
  pseudoholomorphic map may be an infinitely punctured open disk with
  infinitely many closed disks removed.
Or worse: the open disk with the Cantor set removed.
Or any closed Riemann surface with any closed set removed.
To be clear, such assumptions are highly unusual in the standard theory of
  pseudoholomorphic curves, however they are standard in feral curve theory.
This is because feral curves in general are much wilder.
We illustrate this with a plausible example.

Suppose \((W,\Omega)\) is a closed symplectic manifold, with an
  \(\Omega\)-compatible almost complex structure \(J\), a smooth Hamiltonian
  \(H\colon W\to \mathbb{R}\) for which \(0\) is a regular value, and a
  pseudoholomorphic map \(\bar{u}\colon \overline{S}\to W \) where
  \(\overline{S}\) is a closed Riemann surface such as a sphere or torus,
  etc.
For \(\epsilon>0\), what structure does the following set have?
  \begin{align}\label{EQ_example_S}                                       
    S := \big\{z\in \overline{S} : -\epsilon < H\big(\bar{u}(z)\big) <
    \epsilon\big\}
    \end{align}
As it need not be the case that \(\pm \epsilon\) are regular values of
  \(H\circ u\), there is no reason that \(S\) should necessarily admit a
  smooth compactification to a compact Riemann surface with smooth
  boundary.
Indeed, all one can really say is that it has the structure of a closed
  Riemann surface with some closed set removed, which is exactly the sort of
  domain surface that Theorem \ref{THM_main_maybe} allows.
Conceptually, it may be easier to think of the maps we allow as arising
  from restrictions like \(u:=\bar{u}\big|_{S}\), with \(S\) as in equation
  (\ref{EQ_example_S}), however it is worth noting that the existence of
  such an extension (or lack thereof) plays no role in our proof.

With such unusual freedom allowed for the domains of our pseudoholomorphic
  maps, the attentive reader may be concerned about the precise notion of
  genus.
This is made rigorous in Definition \ref{DEF_genus} below, although
  it amounts to exhausting \(S\) by compact two-dimensional manifolds with
  boundary and taking the limit of associated the genera.

A third point is that the hypothesis in Theorem \ref{THM_main_maybe}
  regarding the existence of a suitable pseudoholomorphic map for
  \emph{each} \(\Omega\)-tame almost complex structure seems very
  restrictive, however in practice this is not the case.
For example, Gromov-Witten invariants are invariants of a closed
  symplectic manifold obtained by algebraically counting pseudoholomorphic
  maps of specified genus, homology class, and incidence conditions.
In particular, Gromov-Witten invariants are independent of the choice of
  almost complex structure (provided that it is \(\Omega\)-tame), and thus
  a sufficiently rich Gromov-Witten theory for a closed symplectic
  manifold \((W, \Omega)\) is sufficient to guarantee the hypotheses of
  Theorem \ref{THM_main_maybe} are satisfied in many cases.

We will contrast the feral curve techniques used below with the methods
  used in \cite{LiuT} and \cite{Lu}  momentarily, however at present we
  point out that the the proof of Theorem \ref{THM_main_maybe} will show
  that its conclusion holds under weaker assumptions regarding the
  pseudoholomorphic curves.
Specifically, one only needs the existence of pseudoholomorphic curves for
  a particular sequence of adiabatically degenerating almost complex
  structures.
The notion of this adiabatic degeneration is rather technical and is made
  precise in Definition
  \ref{DEF_adiabatically_degenerating_almost_complex_structures}, however
  the idea is to degenerate the almost complex structure so as to
  geometrically ``stretch the neck'' along a continuum of energy levels.
With such an concept internalized, we direct the reader's attention to
  Theorem \ref{THM_localized_almost_existence} in Section
  \ref{SEC_main_proof} below, which is a localized version of Theorem
  \ref{THM_main_maybe} with the assumptions stripped to
  the absolute essentials.

With the exception of Struwe's results in \cite{Struwe}, which predate
  modern symplectic methods, all proofs of almost existence follow a similar
  pattern: prove the Hofer-Zehnder capacity of a domain containing the
  energy surface is finite, and the desired result follows from \cite{HZ}.
For example, in \cite{Lu} Lu takes cues from \cite{LiuT} to use the
  Gromov-Witten invariants to define a \emph{pseudo-}capacity which is
  finite and bounds the Hofer-Zehnder capacity; the almost existence result
  is then immediate.
In contrast, the proof of Theorem \ref{THM_main_maybe} makes no use of
  capacities at all, and only makes use of pseudhoholomorphic curves --
  specifically feral curve theory.
The idea is to stretch the neck along a continuum of energy levels, and
  analyze some basic properties in the limit.
For those familiar with methods from Symplectic Field Theory, it is worth
  pointing out that the key obstacle to overcome is that there are no Hofer
  energy bounds in this case, and hence infinite energy pseudoholomorphic
  curves (i.e feral curves) can appear in the limit ``building.''
The picture that emerges from this analysis is rather interesting.
It seems that as one takes this adiabatic degeneration, curves which cross
  the region of degeneration are inexorably drawn to collapse onto
  families of periodic orbits.
Or more precisely, such a collapse to families of orbits occurs
  \emph{almost} everywhere, and on the complementary measure zero set the
  curves are allowed to jump between such families, or even jump to a more
  general closed invariant subset.
Further analysis of such limiting curves seems well poised to illuminate
  additional dynamical features.

We close this introductory section with an application which appears to be
  inaccessible to methods relying on the finiteness of the Hofer-Zehnder
  capacity.
We state this as Theorem \ref{THM_existence_and_almost_existence} below,
  but first provide a definition.

\begin{definition}[positive contact type]
  \label{DEF_positive_contact_type_boundary}
  \hfill\\
Let \((W, \Omega)\) be a compact symplectic manifold with boundary.  
Let \(M^+\) be a union of connected components of \(\partial W\).
We say \(M^+ \subset \partial W\) is  of \emph{positive contact type} provided
  there exists an outward pointing nowhere vanishing vector field \(Y\)
  defined in a neighborhood of \(M^+\) in \(W\) for which \(\mathcal{L}_Y
  \Omega = \Omega\); here \(\mathcal{L}\) denotes the Lie derivative.
In this case \(\lambda := i_Y \Omega \big|_{M^+}\) is a contact form on
  \(M^+\).
\end{definition}
%

\setcounter{CounterSectionIntertwine}{\value{section}}
\setcounter{CounterTheoremIntertwine}{\value{theorem}}
\begin{theorem}[intertwining existence and almost existence]
  \label{THM_existence_and_almost_existence}
  \hfill\\
Let \((W, \Omega)\) be a four-dimensional compact connected exact symplectic
  manifold with boundary \(\partial W = M^+ \cup M^-\).
Suppose \(M^+\) is positive contact type in the sense of Definition
  \ref{DEF_positive_contact_type_boundary}, and suppose that one of the
  following three conditions holds:
\begin{enumerate}                                                         
  \item 
  \(M^+\) has a connected component diffeomorphic to \(S^3\),
  \item 
  there exists an embedded \(S^2\subset M^+\) which is homotopically
  nontrivial in \(W\),
  \item 
  \((M^+, \lambda)\) has a connected component which is overtwisted.
  \end{enumerate}
Then for each Hamiltonian \(H\in C^\infty(W,{\mathbb R})\) for which 
  \(H^{-1}(\pm 1) = M^\pm\), the following is true.
For each \(s\in [-1,1]\) the energy level \(H^{-1}(s)\) contains a closed
  non-empty set other than the energy level \(H^{-1}(s)\) itself which is
  invariant under the Hamiltonian flow of \(X_H\); moreover for almost every
  \(s\in [-1, 1]\) this closed invariant subset is a periodic orbit.
\end{theorem}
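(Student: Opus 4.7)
The overall plan is twofold. The \emph{almost every} assertion will follow by verifying the hypotheses of Theorem \ref{THM_main_maybe} (or of its sharper localized version, Theorem \ref{THM_localized_almost_existence}) for the region $\{-1 < H < 1\} \subset W$; the statement that \emph{every} energy level contains a proper closed nonempty invariant subset will then be extracted from the feral-curve compactness analysis of \cite{FH2} applied to the same family of curves used to verify those hypotheses.

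First I would symplectically complete $(W,\Omega)$ at $M^+$ in the standard way, attaching a positive symplectization end along $M^+$ using the outward Liouville vector field $Y$ and extending $H$ smoothly so that the extended Hamiltonian $\hat H$ satisfies $\hat H > 1$ on the attached end and agrees with $H$ on $W$; the negative boundary $M^-$ may be dealt with by extending $H$ arbitrarily outside a small collar, since the curves of interest live in $\{-1 < \hat H < 1\}$. I then need to produce, for the adiabatically degenerating sequence of almost complex structures demanded by Theorem \ref{THM_localized_almost_existence}, a proper pseudoholomorphic map into $\{-1 < \hat H < 1\}$ of bounded genus and area whose composition with $\hat H$ surjects onto $(-1,1)$. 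Under each of the three alternative hypotheses, Hofer's method from \cite{Hofer:1993} produces in the completion a finite-energy pseudoholomorphic plane $\tilde u\colon \mathbb{C} \to \hat W$ asymptotic at infinity to a periodic Reeb orbit on $M^+$: via a Bishop family started from the standard disk in an $S^3$ component in case (1), via bubbling off a pseudoholomorphic sphere representing the essential embedded $S^2$ in case (2), or via a Bishop family started by an overtwisted disk in case (3).

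Next I would couple this construction with neck-stretching along a continuum of regular hypersurfaces $H^{-1}(s)$, $s \in (-1,1)$, producing from each Hofer plane a building whose lower piece is a proper pseudoholomorphic curve into $\{-1 < \hat H < 1\}$ whose positive asymptotics lie on the level being stretched. Varying the stretching level through $(-1,1)$ yields the energy surjectivity required by Theorem \ref{THM_localized_almost_existence}, while the genus of $\tilde u$ (namely zero) and its $\Omega$-area (controlled by the $d\lambda$-period of the asymptotic Reeb orbit on $M^+$) supply uniform genus and area bounds. The almost-existence conclusion on $(-1,1)$ — and hence on $[-1,1]$ up to a measure-zero set — then follows from the main theorem; at $s=1$ Hofer's theorem itself provides the periodic Reeb orbit on $M^+$, and at $s=-1$ the negative asymptotics of the lower piece (forced by $W$ being compact and the curve being proper) supply the required invariant subset of $M^-$.

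The main obstacle is the upgrade from \emph{almost every} to \emph{every} $s \in [-1,1]$. For this I would invoke the feral-curve compactness of \cite{FH2}: as the neck is stretched across a fixed level $s_0 \in [-1,1]$, the relevant sequence of curves, whose SFT-energies are \emph{not} uniformly bounded, converges in a suitably weak sense to a feral limit whose image accumulates onto a closed $X_H$-invariant subset of $H^{-1}(s_0)$. The delicate step is to show that this invariant set is both nonempty and a proper subset of $H^{-1}(s_0)$. Nonemptiness follows from the energy surjectivity of $\hat H \circ u$ together with the properness of the map, while properness of the invariant set relies on the four-dimensionality of $W$ in an essential way: were the invariant subset to fill the entire three-manifold $H^{-1}(s_0)$, the curve's area or genus would have to blow up, contradicting the uniform bounds inherited from Hofer's plane. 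Verifying this dichotomy carefully on the measure-zero exceptional set of "bad" levels, where the feral limit fails to collapse onto a single periodic orbit but must still produce a proper invariant subset, is the heart of the argument.
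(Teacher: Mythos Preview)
Your overall strategy---produce curves satisfying the hypotheses of Theorem~\ref{THM_main_maybe} and invoke \cite{FH2} for the non-minimality on every level---matches the paper's, but the curve construction you propose is different from the paper's and contains a real gap.

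The paper does not attach a symplectization end and use Hofer's finite-energy planes. In the tight $S^3$ case it instead \emph{caps off} the $S^3$-component by gluing $\mathbb{C}P^2\setminus\mathcal{O}$ onto $W$, obtaining a symplectic manifold $(\widetilde W,\widetilde\Omega)$, and works with the connected moduli component of closed $\widetilde J$-spheres containing the standard line $\mathbb{C}P^1\subset\mathbb{C}P^2\setminus\mathcal{O}$. Automatic transversality, positivity of intersections, and the exactness of $\Omega$ on $W$ (which forbids bubbling inside $W$) force this family to sweep out all of $\widetilde W$. In particular some sphere passes through a prescribed point with $H$-value below $E_-$, and since every sphere in the family also meets $\mathbb{C}P^1$ in the cap (where $H>1$), connectedness gives a single closed curve whose restriction to $\{E_-<H<E_+\}$ is energy-surjective, with genus zero and $\widetilde\Omega$-area exactly $\int_{\mathbb{C}P^1}\widetilde\Omega$, uniformly in the choice of tame $J$. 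The overtwisted and $\pi_2$ cases use different curve-generating mechanisms but the same sweep-out logic; the paper defers those details to \cite{FH2}.

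Your energy-surjectivity argument does not work as stated. A single Hofer plane $\tilde u\colon\mathbb{C}\to\hat W$ is asymptotic at its one puncture to a Reeb orbit on $M^+$, but nothing in your outline controls how far its image descends into $W$; in the overtwisted case, for instance, the Bishop family and the bubbled plane can live entirely in the attached end and a thin collar of $M^+$, never reaching levels $H^{-1}(s)$ with $s$ near $-1$. The sentence ``varying the stretching level through $(-1,1)$ yields the energy surjectivity'' conflates the hypothesis of Theorem~\ref{THM_localized_almost_existence}---that \emph{each} $u_k$ individually satisfy $s\circ u_k(S_k)=\mathcal{I}_\epsilon$---with some notion of collective coverage. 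You cannot assemble surjectivity of one map from pieces of many, and stretching the neck at level $s$ changes $J$ rather than forcing the original plane across that level. The paper's capping approach solves exactly this problem via the foliation property of the family of lines; your plane approach, as written, does not.

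Two smaller points. First, the ``every level'' assertion is not the heart of the argument here: it is precisely Theorem~2 of \cite{FH2}, and the paper simply cites it for regular values (the properness of the limit invariant set is established there by a substantial feral-curve analysis, not by the area/genus heuristic you sketch). Second, you omit the trivial observation that at a \emph{critical} value $s_0$ the level $H^{-1}(s_0)$ contains a critical point of $H$, and the constant trajectory through it is already a periodic orbit.
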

%
Note that the image of $H$ is not required to lie in $[-1,1]$.

The proof of Theorem \ref{THM_existence_and_almost_existence} is
  provided in Section \ref{SEC_main_proof} below.

\subsection{Speculation on the
  \texorpdfstring{$C^\infty$}{C-infinity}-Closing Property} 
  \label{SEC_speculation}

Before moving on to the proofs of Theorem \ref{THM_main_maybe} and Theorem
  \ref{THM_existence_and_almost_existence}, we wish to draw some
  connections to the \(C^\infty\)-closing lemma.
We also aim to pose some speculative questions which we believe the
  feral curve techniques below seem well poised to eventually answer.

We begin with an odd-symplectic manifold $(M,\omega)$, and take a compatible
  symplectic form $\Omega$ on ${\mathbb R}\times M$.
We consider the Fr\'echet space $C^{\infty}(M,{\mathbb R})$ and observe
  that every element $f$ defines a hypersurface in ${\mathbb R}\times M$ by
  setting
  \[
  M_f=\{(f(m),m)\in \mathbb{R}\times M :\ m\in M\}.
  \] 
Considering the hypersurface $M_f\subset ({\mathbb R}\times M,\Omega)$ we
  obtain the distinguished line bundle ${\mathcal L}_{f}\rightarrow M_f$,
  given by
  \[
   {\mathcal L}_f:= \ker(\omega_f)\subset TM_f,
   \]
  where $\omega_f$ is the pull-back of $\Omega$ by the inclusion
  $M_f\rightarrow {\mathbb R}\times M$.
Since ${\mathcal L}_f\subset TM_f$ is a dimension one sub-bundle, it is an
  integrable distribution and we are interested in the closed leaves.
We denote by ${\mathcal C}_f$ the union of all points in $M_f$ which lie
  on a closed leaf.
We say that the periodic orbits are dense on $M_f$ provided
  $\textrm{cl}({\mathcal C}_f)=M_f$.
  
\begin{definition}[$C^\infty$-closing property]
  \label{DEF_closing_property_framed_hamiltonian}
  \hfill\\
We say that the odd-symplectic manifold $(M,\omega)$ has the
  $C^{\infty}$-\textit{closing property} provided that there exists a
  compatible $\Omega$ on ${\mathbb R}\times M$ so that for a Baire subset
  $\Sigma$ of $C^{\infty}(M,{\mathbb R})$  the following holds:
  \[
  \textrm{\emph{cl}}({\mathcal C}_f) = M_f\ \ \textrm{for all}\ f\in\Sigma.
  \]
  \end{definition}
%
Again one can use the closing property to define a particular class of
  symplectic manifolds.

\begin{definition}[$C^\infty$-closing property -- symplectic
  manifolds]
  \label{DEF_closing_property_symplectic}
  \hfill\\
We say a symplectic manifold $(W,\Omega)$ has the
  $C^{\infty}$-\textit{closing property} provided for every regular compact
  co-oriented hypersurface $M$ in $W$ the induced $(M,\omega)$ has the
  $C^{\infty}$-closing property.
\end{definition}
%

One can play around with the above definition by allowing only
  hypersurfaces isotopic to small ones or those carrying a suitable
  topology.
Alternatively, one might choose to only allow contact-type hypersurfaces. 
We leave it to the reader to explore these ideas and we only mention the
  following conjecture.

\begin{conjecture}[local $C^{\infty}$-closing property]
  \label{CON_closing}
  \hfill\\
The standard symplectic vector space $({\mathbb
  R}^{2n},\Omega_{\textrm{standard}})$, $n\geq 2$,  has the
  $C^{\infty}$-closing property.
In particular all symplectic manifolds without boundary have the local
  $C^{\infty}$-closing property.
\end{conjecture}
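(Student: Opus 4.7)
The plan is to leverage the rich Gromov–Witten theory of \(\mathbb{CP}^n\), together with a point-constrained refinement of the adiabatic feral-curve analysis underlying Theorem \ref{THM_main_maybe}, to upgrade the almost-existence conclusion to the density-of-periodic-orbits statement that defines the \(C^\infty\)-closing property. Since every regular compact hypersurface \(M \subset \mathbb{R}^{2n}\) sits inside a large Darboux ball embedded in \(\mathbb{CP}^n\), the second half of the conjecture should reduce to the first via the \(U(w)\)-locality provided by Theorem \ref{th1}, so I focus on \((\mathbb{R}^{2n},\Omega_{\mathrm{standard}})\) with \(n\geq 2\).

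For a fixed \(f\in C^{\infty}(M,\mathbb{R})\) I realize \(M_f\subset \mathbb{R}\times M\) under a compatible \(\Omega\) and, given any pair \((q,\varepsilon)\) with \(q\in M_f\) and \(\varepsilon>0\), I use the line-class Gromov–Witten invariant of \(\mathbb{CP}^n\) with one point constraint placed at (the image of) \(q\) to produce, for every \(\Omega\)-tame \(J\), a proper pseudoholomorphic map whose image passes within distance \(\varepsilon\) of \(q\) and satisfies the genus and area bounds required by Theorem \ref{THM_main_maybe}. I then apply the adiabatic neck-stretching of Section \ref{SEC_main_proof} to a sequence \(J_k\) of such tame almost complex structures that degenerate along a continuum of energy levels straddling \(M_f\). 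The resulting feral limit building must contain a component whose image meets every neighborhood of \(q\); by the ``collapse-onto-orbits'' phenomenon for feral limits described in the introduction, this component accumulates either on a closed leaf of \(\mathcal{L}_f\) passing through an \(\varepsilon\)-ball of \(q\), or on a more general closed invariant subset of the characteristic foliation on \(M_f\).

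To pass from ``closed invariant subset'' to ``closed leaf'', I perform a further \(C^\infty\)-small perturbation of \(f\), supported in a small neighborhood of \(q\), designed to destroy the non-periodic minimal invariant sets that might absorb the accumulation while preserving the point-constrained pseudoholomorphic input. Iterating this construction over a countable \(C^\infty\)-dense family of pairs \((q,\varepsilon)\) and invoking Baire category then delivers the desired residual \(\Sigma \subset C^\infty(M,\mathbb{R})\) along which \(\mathrm{cl}(\mathcal{C}_f)=M_f\).

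The principal obstacle, and the reason this statement remains a conjecture rather than a theorem, is precisely the last perturbation step: ruling out that the limiting feral-curve component collapses onto a non-periodic minimal set (for instance a KAM-type invariant torus, or a Denjoy-type exception) in a way that cannot be broken by a \(C^\infty\)-small modification of \(f\). This is the natural analogue, in the feral-curve framework, of the classical perturbation step of the \(C^1\)-closing lemma, and its \(C^\infty\) incarnation is known to be genuinely delicate, as \cite{Herman:1991} demonstrates in related settings. A finer dynamical analysis of the feral limit curves, along the lines foreshadowed at the end of Section \ref{SEC_almost_existence}, appears to be the most promising route to overcoming this obstruction.
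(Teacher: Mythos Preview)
The statement you are addressing is explicitly labeled a \emph{Conjecture} in the paper, not a theorem; the authors provide no proof and state immediately afterward that ``the conjecture is open for all $n\geq 2$ and basically nothing is known for $n\geq 3$.'' So there is no proof in the paper to compare your proposal against, and you yourself correctly identify that what you have written is a heuristic outline rather than an argument: the ``last perturbation step'' you single out is not a technical loose end but the entire content of the problem.

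That said, even the earlier steps of your outline are softer than you present them. Having a point-constrained curve pass within $\varepsilon$ of $q$ for each $J_k$ does not by itself force any component of the feral limit building to accumulate near $q$: the adiabatic degeneration is performed across a continuum of energy levels, and the curve portion near $q$ could drift away under the rescalings $\Psi_k^{t}$ or be absorbed into a component sitting on a different level. The paper's own description of the ``collapse-onto-orbits'' phenomenon is that it occurs \emph{almost everywhere} in the energy parameter, with the exceptional measure-zero set allowing accumulation on arbitrary closed invariant subsets; you would need to argue that the specific level carrying $M_f$ is not exceptional, which is exactly the kind of pointwise control the current feral machinery does not supply. Finally, your proposed $C^\infty$-small perturbation of $f$ to break non-periodic minimal sets while preserving the pseudoholomorphic input is precisely the step that fails in Herman's counterexamples and that distinguishes the $C^\infty$ problem from the $C^1$ one; absent a genuinely new idea here, the outline does not advance beyond the speculation already in Section~\ref{SEC_speculation}.
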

%
The conjecture is open for all $n\geq 2$ and basically nothing is known for $n\geq 3$.
In the case of $n=2$ one knows a partial result, namely that compact,  regular hypersurfaces 
in ${\mathbb R}^4$ of contact type have the $C^{\infty}$-closing property. However, nothing is known 
for general compact regular hypersurfaces in ${\mathbb R}^4$.
Indeed, by a result of Irie, \cite{Irie:2015}, every $(M,\lambda,d\lambda)$, where
  $M$ is a closed three-manifold equipped with a contact form $\lambda$, has
  the $C^{\infty}$-closing property.
In particular every compact regular hypersurface of contact type in
  ${\mathbb R}^4$ has the $C^{\infty}$-closing property.
In the background of Irie's result and a follow-up result, \cite{ AI}, is
  the important volume formula by  Cristofaro-Gardiner,  Hutchings,  and
  Ramos, \cite{CHR}.
At present the proof of \cite{Irie:2015} based on \cite{CHR}  only works in the
  three-dimensional case.
Due to the use of Seiberg-Witten-Floer Theory it will need some new ideas
  to attack the higher-dimensional cases -- perhaps feral curves.

The germ of an idea goes as follows.
The key upshot of feral curves, is that one can stretch the neck along any
  hypersurface.
The downside is that one may find limit sets which are much more
  complicated than a finite set of periodic orbits.
However, Theorem \ref{THM_main_maybe} strongly suggests that generically
  (in the right sense) one can stretch the neck while following a
  \emph{single} curve for each almost complex structure, and pass to the
  limit to find a periodic orbit.
That is, at least one periodic orbit is (generically) found by tracking
  just one curve.
What if there are many curves to track?
High dimensional families of curves, for example.
Here the proposed richness of the Gromov-Witten invariants comes into
  play.
For example, consider \(\mathbb{R}^{2n} \simeq \mathbb{C}^n =
  \mathbb{C}P^n\setminus \mathbb{C}P^{n-1}\), and suppose we consider
  stretching the neck along some generic hypersurface \(M\subset
  \mathbb{R}^{2n}\).
By considering curves of high degree, one obtains high dimensional
  families of curves which stretch and break along periodic orbits.
This raises a question: Which orbits can be found by stretching the neck
  and tracking curves of any fixed (but arbitrarily large) degree?
All orbits, or just a subset?
A dense subset?
If it turns out that neck stretching can find (nearly) every orbit 
  then feral curves seem well poised to recover the
  \(C^\infty\)-closing lemma for arbitrary regular, compact hypersurfaces in \(\mathbb{R}^{2n}\).  
 Currently the known results about the contact-type case in ${\mathbb R}^4$ depend on Seiberg-Witten theory.
  The feral curve theory should be important in removing the contact-type hypothesis. In order to prove the results in higher dimensions
 one way to succeed seems  to be the development of suitable techniques to use higher-dimensional moduli spaces.

\section{Background and Geometric Framework}\label{SEC_background}   
Here we will recall some standard background material, and then we
  provide a geometric framework very specific to the problem of study.
We begin with the notions of an almost complex manifold and
  pseudoholomorphic maps.

\begin{definition}[almost complex structures; compatible and tame]
  \label{DEF_almost_complex_structures}
  \hfill\\
Let \(W\) be a smooth manifold, not necessarily closed, possibly with
  boundary, and let \(J\in \Gamma(\rm{End}(TW))\) be a smooth
  section for which \(J\circ J=-\mathds{1}\).
We call \(J\) an almost complex structure for \(W\), and the pair
  \((W, J)\) an almost complex manifold.
In the case that \(\Omega\) is a symplectic form on \(W\), we say \(J\) is
  \(\Omega\)-compatible provided that \(g(v,w):=\Omega(v, Jw)\) is a
  Riemannian metric.
Under the weaker assumption that \(\Omega(v, Jv) \geq 0\) for all \(v\in
  TW\), and with equality if and only if \(v=0\), we say instead that \(J\)
  is only \(\Omega\)-tame.

\end{definition}
%

\begin{definition}[pseudoholomorphic map]
  \label{DEF_pseudoholomorphic_map}
  \hfill\\
Let \((S, j)\) and \((W, J)\) be smooth almost complex manifolds with
  \({\rm dim}(S)=2\), each possibly with boundary.
A \(C^\infty\)-smooth map \(u:S\to W\) is said to be
  \emph{pseudoholomorphic} provided \(J\cdot Tu = Tu\cdot j\). 
That is, the tangent map of \(u\) intertwines the almost complex
  structures on domain and target.
Unless otherwise specified, we allow \(S\) to be disconnected.  
We say such a map is \emph{proper} provided the pre-image of any compact
  set is compact.
\end{definition}
%

It is worth noting that constant maps are always pseudoholomorphic.
We also note that we will allow the domains of our pseudoholomorphic maps
  to be disconnected, which in conjunction with the fact that constant
  maps are always pseudoholomorphic allows for the possibility that any
  given map may have many constant components -- perhaps infinitely many.
At present we allow this, while noting that second countability of the
  domain Riemann surfaces forces any given map to have at most countably
  many constant components.

Because the domains of our pseudoholomorphic maps are Riemann surfaces
  which need not be compact, we make the notion of \emph{genus} precise with
  the following definition.

\begin{definition}[Genus]
  \label{DEF_genus}
Let \(S\) be a two-dimensional oriented manifold, possibly with boundary,
  with at most countably many connected components, and with the property
  that each connected component of \(\partial S\) is compact.
Then
\begin{enumerate}                                                         
    \item If \(S\) is closed and connected, then define \({\rm
      Genus}(S):=g\) where \(\chi(S)=2-2g\) is the Euler characteristic
      of \(S\).
    \item If \(S\) is compact and connected with \(n\) boundary
      components, define \(\widetilde{S}=\big(S \sqcup (\sqcup_{k=1}^n
      D^2)\big)/\sim\) to be the closed surface capped off by \(n\) disks,
      and define \({\rm Genus}(S):={\rm Genus}(\widetilde{S})\).
    \item If \(S\) is compact (possibly with boundary), then \({\rm
      Genus}(S)\) is defined to be the sum of the genera of each connected
      component.
    \item If \(S\) is not compact, then \({\rm Genus}(S)\)
      is defined by taking any nested sequence \(S_1\subset S_2\subset
      S_3\subset\cdots\) of compact surfaces (possibly with boundary) such
      that \(S_k\subset S\) for all \(k\in \mathbb{N}\) and such that \(S
      =\cup_k S_k\); then we define \({\rm Genus}(S):=\lim_{k\to \infty}
      {\rm Genus}(S_k)\).
  \end{enumerate}
\end{definition}

We now turn our attention to geometric structures more specific to the
  proof of Theorem \ref{THM_main_maybe}.
The first key idea is what we call a \emph{framed Hamiltonian energy
  pile} (see Definition \ref{DEF_framed_hamiltonian_energy_pile}) which is
  essentially the neighborhood of a compact energy level with enough
  structure to regard it as something like a family of framed Hamiltonian
  manifolds.
This relationship is made precise with Lemma
  \ref{LEM_energy_levels_are_framed_Hamiltonian_manifolds}.
Also important is Lemma
  \ref{LEM_localization_to_framed_Hamiltonian_energy_pile}, which is a means
  to identify the neighborhood of a compact energy level in a general
  symplectic manifold, with the structure of a framed Hamiltonian energy
  pile, thereby localizing the almost existence problem.

\begin{definition}[framed Hamiltonian energy pile]
  \label{DEF_framed_hamiltonian_energy_pile}
  \hfill\\
Let \(\mathcal{I}_\epsilon\) be the interval \((-\epsilon, \epsilon)\)  
  equipped with the coordinate \(s\), and let \(M\) be a closed
  odd-dimensional manifold.
Assume \(\mathcal{I}_\epsilon \times M \) is equipped with the symplectic
  form \(\Omega\).
Let \(H\colon \mathcal{I}_\epsilon \times M \to \mathbb{R}\) be the smooth
  Hamiltonian \(H(s,p)=s\), and let \(X_H\) be the associated Hamiltonian
  vector field determined by \(i_{X_H}\Omega = - d H\)
and let  \(\hat{\lambda}\) be a one-form on \(\mathcal{I}_\epsilon \times M\)
  which satisfies the following three conditions:
\begin{enumerate}[(H1)]                                                   
  \item 
  \(\hat{\lambda}(\partial_s) = 0\), 
  \item 
  \(\mathcal{L}_{\partial_s} \hat{\lambda}=0\) where \(\mathcal{L}\) is
  the Lie derivative,
  \item \(\hat{\lambda}(X_H) >0\).
  \end{enumerate}
We call the triple \((\mathcal{I}_\epsilon \times M,\Omega, \hat{\lambda})
  \) a framed Hamiltonian energy pile.
\end{definition}
If we consider local coordinates $x_1,...,x_N$ on $M$, $N=2n+1$, and the
  coordinate $s$ on $\mathcal{I}_{\varepsilon}$ then $\hat{\lambda}$ can be
  written at the point $(s,x)$ as
\begin{eqnarray}\label{Hope}
\hat{\lambda}(s,x)= \sum_{i=1}^N a_i(x)dx_i.
\end{eqnarray}
due to the imposed conditions (H1) and (H2).
%
We shall show in Lemma
  \ref{LEM_localization_to_framed_Hamiltonian_energy_pile} how framed
  Hamiltonian energy piles arise near a compact regular energy surface. 
At first, however, we begin by deriving a few geometric structures which
  arise as an immediate consequence of having a framed Hamiltonian energy
  pile.
We illuminate them at present.
Define the two-plane distribution \(\hat{\rho}\) on
  $\mathcal{I}_{\varepsilon}\times M$ by
  \begin{align} \label{EQQQ_xi}                                           
    \hat{\rho} = {\rm Span}(\partial_s , X_H),
    \end{align}
  and define the codimension-two plane distribution \(\xi\) on
  $\mathcal{I}_{\varepsilon}\times M$ by
  \begin{align}\label{EQ_xi}                                              
    \xi = {\rm ker}\; (ds \wedge \hat{\lambda})= ({\rm ker}\; ds) \cap
    ({\rm ker}\; \hat{\lambda}).
    \end{align}
We observe that the vector bundle \(\xi\rightarrow
  \mathcal{I}_{\varepsilon}\times M\) is ${\mathbb R}$-invariant in the
  following sense.
Given $s_0,s_1\in\mathcal{I}_{\varepsilon}$,  the map
  $(h+s_0,m)\rightarrow (h+s_1,m)$, which is defined for small \(\vert
  h\vert \), pushes forward (the obvious restrictions of)  $\xi$ to $\xi$.
Moreover, we have  the splitting 
\[T(\mathcal{I}_\epsilon \times M) =
  \hat{\rho}\oplus \xi,
  \]
   and the associated projections:
  \begin{align*}                                                          
    \pi^{\hat{\rho}}\colon \hat{\rho}\oplus \xi \to \hat{\rho} 
    \qquad\text{and}\qquad
    \pi^{\xi}\colon \hat{\rho}\oplus \xi \to \xi.
    \end{align*}
Define the two-form \(\hat{\omega}\) by 
\begin{align}   \label{XX_decomp}                                         
  \hat{\omega} = \Omega\circ (\pi^\xi \times \pi^\xi).
  \end{align}
Here and throughout, we will also employ the following notation,
  \begin{align*}                                                          
    \hat{X}:= \frac{X_H}{\hat{\lambda}(X_H)}.
    \end{align*}

\begin{lemma}[localization to a framed Hamiltonian energy pile]
  \label{LEM_localization_to_framed_Hamiltonian_energy_pile}
  \hfill\\
Let \((\widetilde{W},\widetilde{\Omega})\) be a symplectic manifold
  without boundary, and let \(\widetilde{H}\colon \widetilde{W} \to
  \mathbb{R}\) be a \(C^\infty\)-smooth proper Hamiltonian.
Suppose further that zero is a regular value of
  \(\widetilde{H}\).
Then there exists an \(\epsilon>0\), a framed Hamiltonian energy pile
  \((\mathcal{I}_\epsilon\times M, \Omega, \hat{\lambda})\), and a
  \(C^\infty\)-smooth diffeomorphism
  \begin{align*}                                                          
    \Phi\colon \mathcal{I}_\epsilon \times M \to \{|\widetilde{H}| <
    \epsilon\} \subset \widetilde{W}
    \end{align*}
  for which 
  \begin{align*}                                                          
    (\widetilde{H}\circ \Phi)(s,p) = s\qquad\text{and}\qquad
    \Phi^*\widetilde{\Omega} = \Omega.
    \end{align*}
Additionally, the framed Hamiltonian energy pile can be found such that
  along the energy level \(\{0\}\times M\), the following are true
\begin{enumerate}                                                         
  \item 
  \(\hat{\lambda}(X_H)=1.\)
  \item 
  The vector bundles \(\hat{\rho}\) and \(\xi\) are symplectic complements
    over $\{0\}\times M$.
  That is, for each \(q\in \{0\}\times M\), each \(v_q \in \hat{\rho}_q\),
    and each \(w_q\in \xi_q\), we have \(\Omega(v_q, w_q)=0\).
  \end{enumerate}
\end{lemma}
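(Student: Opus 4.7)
The plan is to straighten out the energy direction by a flow, obtaining $\Phi$ and $\Omega$, and then reverse-engineer $\hat\lambda$ from the symplectic form itself so that condition (2) falls out for free.

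\textbf{Step 1: Construct $\Phi$.} Since $\widetilde H$ is proper and $0$ is regular, $M := \widetilde H^{-1}(0)$ is a closed compact hypersurface in $\widetilde W$. Pick a smooth vector field $\widetilde Y$ on a neighborhood of $M$ with $d\widetilde H(\widetilde Y) \equiv 1$; for instance $\widetilde Y := \nabla \widetilde H / |\nabla \widetilde H|^2$ for any auxiliary Riemannian metric. By compactness of $M$, its flow $\phi_s^{\widetilde Y}$ exists uniformly for $|s| < \epsilon$ for some $\epsilon > 0$, and
\[
\Phi(s,m) := \phi_s^{\widetilde Y}(m)
\]
defines a diffeomorphism from $\mathcal{I}_\epsilon \times M$ onto a neighborhood of $M$ with $\widetilde H \circ \Phi(s,m) = s + \widetilde H(m) = s$. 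After shrinking $\epsilon$ we may arrange that the image is exactly $\{|\widetilde H|<\epsilon\}$. Set $\Omega := \Phi^*\widetilde\Omega$. With $H(s,p)=s$, the Hamiltonian vector field on $\mathcal{I}_\epsilon\times M$ obeys $i_{X_H}\Omega = -ds$, so in particular $\Omega(\partial_s, X_H) \equiv 1$ and $X_H$ is tangent to each level $\{s\}\times M$.

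\textbf{Step 2: Choose $\hat\lambda$.} Let $\iota_M \colon M \to \mathcal{I}_\epsilon \times M$ be $m\mapsto(0,m)$, let $\pi_M$ be the projection onto $M$, and set
\[
\lambda_0 := \iota_M^*\bigl(i_{\partial_s}\Omega\bigr), \qquad \hat\lambda := \pi_M^*\lambda_0.
\]
By construction $\hat\lambda$ annihilates $\partial_s$ and is pulled back from $M$, so (H1) and (H2) hold automatically. Along $\{0\}\times M$,
\[
\hat\lambda(X_H) = \lambda_0(X_H|_M) = \Omega(\partial_s, X_H) = 1,
\]
which gives the first additional conclusion; (H3) on all of $\mathcal{I}_\epsilon \times M$ follows from continuity and compactness of $M$ after possibly shrinking $\epsilon$.

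\textbf{Step 3: Symplectic complements.} For any $(0,m)$ and any $w\in\xi_{(0,m)}$ we have $ds(w)=0$, so $w\in T_mM$, and $\hat\lambda(w)=\lambda_0(w)=0$. Hence
\[
\Omega(X_H, w) = -ds(w) = 0, \qquad \Omega(\partial_s, w) = (i_{\partial_s}\Omega)(w) = \lambda_0(w) = 0.
\]
So every vector in $\hat\rho_{(0,m)} = \mathrm{Span}(\partial_s, X_H)$ is $\Omega$-orthogonal to $\xi_{(0,m)}$. Since $\hat\rho$ is symplectic (as $\Omega(\partial_s, X_H)=1$) and $\dim\hat\rho + \dim\xi = 2 + 2n$ matches the ambient dimension, $\hat\rho$ and $\xi$ are symplectic complements, proving the second additional conclusion.

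\textbf{Anticipated main obstacle.} Conditions (H1)--(H3) and the first additional condition are entirely routine once the collar is in place. The non-formal point is the symplectic-complement condition (2): a generic $\hat\lambda$ satisfying (H1)--(H3) will not make $\hat\rho$ and $\xi$ $\Omega$-orthogonal. The key observation is that $\hat\lambda$ must agree with $i_{\partial_s}\Omega$ when both are restricted to $T_mM \subset T_{(0,m)}(\mathcal{I}_\epsilon\times M)$; defining $\lambda_0 := \iota_M^*(i_{\partial_s}\Omega)$ and extending in an $s$-invariant manner is precisely what achieves this. All the remaining work is compactness of $M$, continuity, and shrinking $\epsilon$.
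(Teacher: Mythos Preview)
Your proof is correct and takes essentially the same approach as the paper: build $\Phi$ from a normalized gradient flow, then pull back from $M$ a one-form whose kernel on $T_mM$ is $\{w:\Omega(\partial_s,w)=0\}$ so that the symplectic-complement condition is automatic. Your formula $\lambda_0=\iota_M^*(i_{\partial_s}\Omega)$ is a cleaner packaging of what the paper does when it specifies $\ker\tilde\lambda$ via the $\widetilde\Omega$-symplectic complement of $\mathrm{Span}(X_{\widetilde H},\nabla\widetilde H)$ (the paper's use of a compatible $\widetilde J$ to define the metric is inessential, as your argument shows).
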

%
\begin{proof}
We begin by fixing an auxiliary \(\widetilde{\Omega}\)-compatible almost
  complex structure on \(\widetilde{W}\); denote it \(\widetilde{J}\).
This gives rise to the Riemannian metric \(g_{\widetilde{J}} =
  \widetilde{\Omega}\circ ({\rm Id}\times \widetilde{J})\).
Using this metric to compute the gradient of \(\widetilde{H}\), we let
  \(\varphi^t\) be the time \(t\) flow associated to the vector field
  \(\|\nabla \widetilde{H}\|_{g_{\widetilde{J}}}^{-2}\nabla
  \widetilde{H}\) in a neighborhood \(\{|\widetilde{H}|<\epsilon\}\) for
  some small \(\epsilon>0\) yet to be determined.
Defininig \(M:=\widetilde{H}^{-1}(0)\), it follows that for all
  sufficiently small \(\epsilon>0\) 
  \begin{align*}                                                          
    &\Phi\colon \mathcal{I}_\epsilon\times M \to \{|\widetilde{H}|
    <\epsilon\} \subset \widetilde{W}\\
    &\Phi(s,p) = \varphi^s(p)
    \end{align*}
  is a diffeomorphism, and \(H:= \widetilde{H}\circ \Phi\)
  satisfies \(H(s,p) = s\).

By construction, the vector field \(X_{\widetilde{H}}\) is tangent to level
  sets of \(\widetilde{H}\), and thus along \(\{\widetilde{H}=0\}\) we can
  define \(\tilde{\lambda}\) to be the one form uniquely determined by the
  conditions
  \begin{align*}                                                          
  \tilde{\lambda}(X_{\widetilde{H}})=1 \qquad\text{and}\qquad
    {\rm ker}\; \tilde{\lambda} = \mathbb{R} \nabla \widetilde{H} \, \oplus
    \big({\rm Span}(X_{\widetilde{H}}, \nabla\widetilde{H})\big)^\bot
    \end{align*}
  where \(\bot\) denotes the \(\widetilde{\Omega}\)-symplectic complement.
We then define
  \begin{align*}                                                          
    \hat{\lambda} = {\rm pr}_2^* \tilde{\lambda}
    \end{align*}
  where \({\rm pr}_2\colon \mathcal{I}_\epsilon \times M\to M\) is the
  canonical projection.
It is straightforward to verify that \(\hat{\lambda}(\partial_s)=0\), and
  that \(\mathcal{L}_{\partial_s} \hat{\lambda}=0\).
Because \(\hat{\lambda}(X_H)\big|_{\{0\}\times M} = 1\), it then follows
  that \(\hat{\lambda}(X_H)>0\) for all sufficiently small \(\epsilon>0\).
By construction then, \((W, \Omega, \hat{\lambda})\) is a framed
  Hamiltonian energy pile provided that \(\epsilon>0\) is sufficiently
  small.
Moreover by construction, along \(\{0\}\times M\) it is both the case that
  \(\hat{\lambda}(X_H)=1\) and the case \(\hat{\rho}\) and \(\xi\) are
  \(\Omega\)-symplectic complements.
This completes the proof of Lemma
  \ref{LEM_localization_to_framed_Hamiltonian_energy_pile}.
\end{proof}
We make the following important observation.

\begin{lemma}[energy levels are framed Hamiltonian manifolds]
  \label{LEM_energy_levels_are_framed_Hamiltonian_manifolds}
  \hfill\\
Let \(\big(\mathcal{I}_\epsilon\times M, \Omega, \hat{\lambda}\big)\) be a
  framed Hamiltonian energy pile, and let \(\hat{\rho}\), \(\xi\), and
  \(\hat{\omega}\) be the associated structures defined above, see
  (\ref{EQQQ_xi}), (\ref{EQ_xi}), and (\ref{XX_decomp}).
Then for each \(s_0\in \mathcal{I}_\epsilon\), the restriction of
  \(\hat{\lambda}\) and \(\hat{\omega}\) to the energy level \(\{s_0\}\times
  M\) is a framed Hamiltonian structure for this energy level.
\end{lemma}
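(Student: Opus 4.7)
Fix \(s_0\in \mathcal{I}_\epsilon\) and a point \((s_0,p)\in \{s_0\}\times M\). Writing \(\lambda_{s_0}:=\iota_{s_0}^{\ast}\hat{\lambda}\) and \(\omega_{s_0}:=\iota_{s_0}^{\ast}\hat{\omega}\) for the pullbacks under the inclusion \(\iota_{s_0}\colon \{s_0\}\times M\hookrightarrow \mathcal{I}_\epsilon\times M\), the goal is to verify that \(\lambda_{s_0}\wedge \omega_{s_0}^n\) is pointwise non-vanishing. Since \(M\) is closed and \(\dim M=2n+1\), this is what we need to exhibit a framed Hamiltonian structure.

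First I would record the basic decomposition \(T_{(s_0,p)}(\{s_0\}\times M)=\ker(ds)=\mathbb{R}\,X_H\oplus \xi_{(s_0,p)}\). Indeed, \(ds(X_H)=-\Omega(X_H,X_H)=0\) and \(\xi\subset\ker(ds)\) by definition, while \(X_H\notin \xi\) because \(\hat{\lambda}(X_H)>0\) and \(\hat{\lambda}|_\xi=0\); the dimensions \(1+2n=2n+1\) then force equality. Next I would observe that since \(\pi^\xi\) acts as the identity on \(\xi\), we have \(\hat{\omega}|_\xi=\Omega|_\xi\), so \(\omega_{s_0}|_\xi=\Omega|_\xi\). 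I would also note that \(\pi^\xi X_H=0\) (as \(X_H\in\hat{\rho}\)), so \(i_{X_H}\hat{\omega}\equiv 0\), hence \(i_{X_H}\omega_{s_0}\equiv 0\) as well.

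The heart of the argument, and the one step that really uses the structure of a framed Hamiltonian energy pile, is to verify that \(\Omega|_\xi\) is a symplectic form on the rank-\(2n\) bundle \(\xi\). The symplectic complement \(\xi^\Omega\subset T(\mathcal{I}_\epsilon\times M)\) has rank \(2\), and the claim is that \(\xi^\Omega\cap \xi=\{0\}\). To see this I would produce an explicit basis of \(\xi^\Omega\): the vector \(X_H\) lies in \(\xi^\Omega\) because for \(w\in \xi\subset \ker(ds)\), \(\Omega(X_H,w)=-ds(w)=0\); and the vector \(Y\) characterized by \(i_Y\Omega=\hat{\lambda}\) (uniquely defined since \(\Omega\) is non-degenerate) lies in \(\xi^\Omega\) because for \(w\in \xi\subset \ker(\hat{\lambda})\), \(\Omega(Y,w)=\hat{\lambda}(w)=0\). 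The 1-forms \(-ds\) and \(\hat{\lambda}\) are linearly independent (use \(ds(\partial_s)=1\) and \(\hat{\lambda}(\partial_s)=0\) from (H1)), so \(X_H\) and \(Y\) are linearly independent, giving \(\xi^\Omega=\mathrm{Span}(X_H,Y)\). Finally, \(Y\notin \xi\) since \(ds(Y)=-\Omega(X_H,Y)=\Omega(Y,X_H)=\hat{\lambda}(X_H)>0\); a short computation then shows \(\mathrm{Span}(X_H,Y)\cap \xi=\{0\}\), so \(\Omega|_\xi\) is non-degenerate.

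With this in hand the conclusion is immediate. Choose a basis \(\{X_H,e_1,\dots,e_{2n}\}\) of \(T_{(s_0,p)}(\{s_0\}\times M)\) with \(\{e_i\}\) a basis of \(\xi_{(s_0,p)}\). Since \(\lambda_{s_0}(e_i)=0\) for every \(i\), every term in the expansion of the wedge product vanishes except the one which pairs \(\lambda_{s_0}\) with \(X_H\), giving
\begin{align*}
(\lambda_{s_0}\wedge \omega_{s_0}^n)(X_H,e_1,\dots,e_{2n})
  = \hat{\lambda}(X_H)\cdot (\Omega|_\xi)^n(e_1,\dots,e_{2n}),
\end{align*}
which is nonzero by (H3) and the non-degeneracy of \(\Omega|_\xi\) just established. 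This holds at every \((s_0,p)\), so \(\lambda_{s_0}\wedge \omega_{s_0}^n\) is a volume form on \(\{s_0\}\times M\). The main obstacle, as indicated, is verifying symplecticity of \(\Omega|_\xi\); everything else is bookkeeping about the decomposition \(T=\hat{\rho}\oplus \xi\).
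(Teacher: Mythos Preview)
Your non-degeneracy argument is correct and in fact more detailed than the paper's: the paper simply asserts that \(\Omega\) is non-degenerate on each of \(\hat{\rho}\) and \(\xi\), deduces \(ds\wedge\hat{\lambda}\wedge\hat{\omega}^n>0\) on all of \(\mathcal{I}_\epsilon\times M\), and then obtains \(\hat{\lambda}\wedge\hat{\omega}^n>0\) on each slice by contracting with \(\partial_s\). Your explicit computation of \(\xi^\Omega=\mathrm{Span}(X_H,Y)\) and its trivial intersection with \(\xi\) is precisely what justifies the assertion the paper leaves unproved.

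However, there is a genuine gap: you never verify that \(\omega_{s_0}\) is \emph{closed}. Although Definition~\ref{DEF_framed} as stated omits this, the paper's proof opens by recalling that a framed Hamiltonian structure requires the two-form to be closed, and then devotes its second half to checking exactly this. The argument is short and you already have every ingredient: you observed \(i_{X_H}\hat{\omega}=0\), and in your first paragraph that \(\Omega(X_H,w)=-ds(w)=0\) for \(w\) tangent to \(\{s_0\}\times M\). Together with \(\hat{\omega}|_\xi=\Omega|_\xi\), this shows that \(\omega_{s_0}\) and \(\iota_{s_0}^{\ast}\Omega\) agree on \(\xi\) and both annihilate \(X_H\), hence \(\omega_{s_0}=\iota_{s_0}^{\ast}\Omega\), which is closed as the pullback of a closed form. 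You should add this step.
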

%
\begin{proof}
We begin by recalling that a framed Hamiltonian structure \(\eta=(\lambda,
  \omega)\) for an odd-dimensional manifold \(M\) is a one-form
  \(\lambda\) and a closed two-form \(\omega\) for which  \(\lambda
  \wedge \omega \wedge \cdots \wedge \omega =: {\rm vol}_M\) is a volume form.
To show this latter non-degeneracy condition is satisfied on our energy
  levels, first note that on \(\mathcal{I}_\epsilon \times M\) we have that
  \(T(\mathcal{I}_\epsilon\times M) = \hat{\rho}\oplus \xi\)  is a
  splitting and \(\Omega\) is non-degenerate on each of \(\hat{\rho}\) and
  \(\xi\).
Moreover, by construction 
  \begin{align*}                                                          
    \xi = {\rm ker}\; (ds\wedge \hat{\lambda})\qquad\text{and}\qquad
    \hat{\rho} = {\rm ker}\; (\hat{\omega}) = { \rm Span}(\partial_s,
    X_H),
    \end{align*}
  and \(\hat{\omega}=\Omega\circ(\pi^\xi \times \pi^\xi)\) so that
  \begin{align*}                                                          
    ds\wedge \hat{\lambda} \wedge \hat{\omega}^n >0 \qquad\text{on
    }\mathcal{I}_\epsilon\times M;
    \end{align*}
    here \({\rm dim} (M) = 2n +1\).
It immediately follows that 
  \begin{align*}                                                          
    \hat{\lambda} \wedge \hat{\omega}^n >0 \qquad\text{on } \{s_0\}\times
    M \text{ for each }s_0\in \mathcal{I}_\epsilon.
    \end{align*}
This establishes the non-degeneracy condition.

Next, we establish that the restriction of \(\hat{\omega}\) to each energy
  level \(\{s_0\}\times M\) is closed.
To that end, let \(v,w\in T(\{s_0\}\times M)\).
Then there exist \(a,b\in \mathbb{R}\) and \(v^\xi, w^\xi \in \xi\) such
  that
  \begin{align*}                                                          
    v = a X_H + v^\xi\qquad\text{and}\qquad w = b X_H + v^\xi, 
    \end{align*}
  and then
  \begin{align*}                                                          
    \Omega(v, w) &= \Omega\big(a X_H + v^\xi,b X_H + w^\xi\big)
    \\
    &= 
    \Omega(v^\xi,w^\xi)
    \\
    &= 
    \Omega\big(\pi^\xi(v),\pi^\xi(w)\big)
    \\
    &= 
    \hat{\omega}(v,w)
    \end{align*}
  which shows that for the inclusion \(\iota\colon \{s_0\}\times M
  \hookrightarrow \mathcal{I}_\epsilon \times M \) we have
  \(\hat{\omega}\big|_{\{s_0\}\times M}
  = \iota^*\Omega\), and thus the restriction of \(\hat{\omega}\) to
  \(\{s_0\}\times M\) is closed.
This completes the proof of Lemma
  \ref{LEM_energy_levels_are_framed_Hamiltonian_manifolds}.
\end{proof}
For the following considerations of adapted almost complex structures we
  summarize the salient points of the previous discussion.
By definition a framed Hamiltonian manifold \((\check{M}, \check{\lambda},
  \check{\omega})\) has an associated Reeb vector field
  \(\check{X}\) uniquely determined by the equations
  \begin{align*}                                                          
    \check{\lambda}(\check{X}) = 1\qquad\text{and}\qquad
    i_{\check{X}}\check{\omega} =0.
    \end{align*}
Given a framed Hamiltonian energy pile $(\mathcal{I}_{\varepsilon}\times
  M,\Omega,\hat{\lambda})$ we can consider the Hamiltonian $H$ given by
  $H(s,m)=s$, which has Hamiltonian vector field $X_H$ is defined by
  $i_X\Omega=-dH$.
We normalize it by setting
  \[
  \widehat{X}=X_H/\hat{\lambda}(X_H).
  \]
Then we observe that $\widehat{X}$ satisfies
  \begin{align*}                                                          
    \hat{\lambda}(\widehat{X}) = 1, \qquad ds(\widehat{X})=0,
    \qquad\text{and}\qquad i_{\widehat{X}}\hat{\omega}=0.
    \end{align*}
Consequently, on each energy level \(\{s_0\}\times M\) the Reeb 
  vector field associated to the framed Hamiltonian structure
  \((\hat{\lambda}, \hat{\omega})\big|_{\{s_0\}\times M}\) is the
  restriction of \(\widehat{X}\).

We now turn to equipping framed
  Hamiltonian energy piles with a certain class of almost complex
  structures, which we now define.

\begin{definition}[weakly adapted almost complex structure]
  \label{DEF_weakly_adapted_J}
  \hfill\\
Let \(\big(\mathcal{I}_\epsilon\times M, \Omega, \hat{\lambda}\big)\) be a
  framed Hamiltonian energy pile, and let \(\hat{\rho}\), \(\xi\), and
  \(\hat{\omega}\) be the associated structures defined above.
Let \(J\) be an almost complex structure on \(\mathcal{I}_\epsilon\times
  M\) which satisfies the following conditions.
\begin{enumerate}[(J1)]                                                   
  \item  
  It preserves the splitting \(\hat{\rho}\oplus \xi\); that is,
  \begin{align*}                                                          
    J: \hat{\rho}\to \hat{\rho} ,
    \qquad\text{and}\qquad
    J\colon \xi \to \xi. 
    \end{align*}
  \item There exists a smooth function of the form
  \begin{align*}                                                          
    &\phi\colon \mathcal{I}_\epsilon \times M\to (0, 1]
    \\
    &\phi(s,p) = \phi(s)
    \end{align*}
  with the property that
  \begin{align*}                                                          
    \phi \cdot J \partial_s =  \widehat{X}\qquad\text{where}\qquad
    \widehat{X}=\frac{X_H}{\hat{\lambda}(X_H)}.
    \end{align*}
  \item 
  The following is a Riemannian metric 
  \begin{align}\label{EQ_Riemannian_metric}                               
    g_J(v,w) =(ds \wedge \hat{\lambda} + \hat{\omega})(v, J w).
    \end{align}
  \end{enumerate}
In this case we say \(J\) is an almost complex structure \textit{weakly adapted} to
  the framed Hamiltonian energy pile \((\mathcal{I}_\epsilon \times M,
  \Omega, \hat{\lambda})\).
\end{definition}
%

We pause for a moment to discuss the manner in which these almost complex
  structures are weakly adapted.
Consider a fixed framed Hamiltonian energy pile
  $(\mathcal{I}_{\varepsilon}\times M,\Omega,\hat{\lambda})$. 
Associated to this data we have the plane field bundles
\begin{align*}                                                          
  \xi = {\rm ker}\; (ds\wedge \hat{\lambda})\qquad\text{and}\qquad
  \hat{\rho} = {\rm ker}\; (\hat{\omega}) = { \rm Span}(\partial_s,
  X_H)={ \rm Span}(\partial_s, \widehat{X}),
  \end{align*}
   with the two-form \(\hat{\omega}=\Omega\circ(\pi^\xi \times \pi^\xi)\).
 Here we recall that
  \[
  T(\mathcal{I}_{\varepsilon}\times M)= \hat{\rho}\oplus \xi.
  \]
Following motivation from Symplectic Field Theory (SFT) as proposed
  in \cite{EGH}, there is a somewhat natural choice of class of almost
  complex structures here, namely those of the form
  \begin{align*}                                                          
    J\partial_s = \widehat{X} \qquad\text{and}\qquad J\colon \xi \to \xi
    \end{align*}
  where \(\hat{\omega}\circ ({\rm Id}\times J)\big|_\xi\) is symmetric and
  positive definite.
There are two issues of note, which make such a choice somewhat
  different from the weakly adapted almost complex structures defined above.
The first such difference is that in general \(\hat{\omega}\) is not
  translation invariant, 
Put another way, this means that \(\widehat{X}\) has \(s\) dependency, or
  rather that \(\widehat{X}(s,p) \neq \widehat{X}(s',p)\) in general for
  \(s\neq s'\).
Because \(\widehat{X}\) is not translation invariant, and in fact in
  general the line bundle \(\mathbb{R} \widehat{X}\subset \hat{\rho}\)
  will fail to be translation invariant, it follows that any almost complex
  structure which preserves \(\hat{\rho}\) will also fail to be translation
  invariant.

It is worth recalling that the framework of SFT
  typically requires a translation invariant almost complex structure (in
  cylindrical homogeneous regions of symplectizations) or else is only
  required to be symplectically tame (in the inhomogenous or cobordant
  regions).
In contrast, Definition \ref{DEF_weakly_adapted_J}  requires the almost
  complex structure to be carefully ``adapted'' in an inhomogeneous region.
Perhaps a more important feature of Definition \ref{DEF_weakly_adapted_J} is the
  ``weakness'' condition, which allows the almost complex structure to
  have the form \(\phi\cdot J\partial_s = \widehat{X}\).
This is best understood as a partial degeneration of the almost complex
  structure which can be undone by a finite amount of
  ``stretching the neck'' along the first factor of
  \(\mathcal{I}_\epsilon\times M\).
More precisely, we define the embedding.   
  \begin{align}                                                           
    &\Psi \colon\mathcal{I}_\epsilon \times M  \to \mathbb{R}\times M
    \label{EQ_Psi_new}
    \\
    &\Psi(s, p) = \big(\psi(s), p\big)\notag
    \end{align}
  where
  \begin{align}\label{EQ_psi_new}                                         
    \psi(s) = \int_0^s \frac{1}{\phi(t)} dt.
    \end{align}
To see the utility of the map \(\Psi\), we consider an example, which
  already explains its key features with respect to the other relevant data.
Specifically, we consider the case that \(\phi(s) = \delta\) where \(\delta\)
  is some very small positive number, and we assume that \(J_\xi :=
  J\big|_\xi \colon \xi \to \xi\) is translation invariant in the \(s\)
  direction; that is, \(\mathcal{L}_{\partial_s} J_\xi = 0\).
For \(v^\xi\in \xi\) we then have
  \begin{align*}                                                          
    J\big(a \partial_s + b \widehat{X} + v^\xi \big)
    =
    -\delta b \partial_s + \frac{a}{\delta} \widehat{X} +
    J_\xi v^\xi.
    \end{align*}
We also see that in this case
  \begin{align*}                                                          
    &\Psi\colon (-\epsilon, \epsilon) \times M \to
    \Big(-\frac{\epsilon}{\delta}, \frac{\epsilon}{\delta}\Big)\times M\ \
    (\textrm{diffeomorphism})
    \\ 
    &\Psi(s, p)=(s\delta^{-1} , p)  
    \end{align*}
  and thus
  \begin{align*}                                                          
    (\Psi_*J)\big(a \partial_{\check{s}} + b (\Psi_*\widehat{X}) + v^\xi
    \big)
    =
    - b \partial_{\check{s}} + a (\Psi_*\widehat{X}) +
    J_\xi v^\xi,
    \end{align*}
  where \(\check{s}\) is the coordinate on \((-\epsilon \delta^{-1},
  \epsilon \delta^{-1})\).
Note that if we abuse notation by identifying \(\{s_0\}\times M\) with
  \(M\), then
  \begin{align*}                                                          
    (\Psi_* \widehat{X})(\check{s},p) = \widehat{X}(\delta \check{s}, p).
    \end{align*}
Similarly, if \(J_\xi = J\big|_\xi\) has \(s\) dependence, then by abusing
  notation again, we have
  \begin{align*}                                                          
    (\Psi_* J_\xi)(\check{s} , p) = J_\xi(\delta \check{s}, p).
    \end{align*}
Put another way, because the tangent bundle splits as \(\hat{\rho}\oplus
  \xi\), and because the almost complex structures preserve this
  splitting, and the stretching direction is contained in \(\hat{\rho}\),
  it follows that this ``neck stretching'' does not degenerate the
  restriction \(J_\xi\), and it is chosen to stretch the compressed \(J\)
  into something more well behaved.
For example, \((\Psi_*J)\partial_{\check{s}}= \Psi_*\widehat{X}\).
It is left to the reader to observe that this qualitative behavior is
  preserved when one changes from the example case of \(\phi(s)=\delta\)
  to the more general case that \(\phi\colon \mathcal{I}_\epsilon\times
  M\to (0, 1]\) with \(\phi(s,p)=\phi(s)\).

We will elaborate further on this stretching construction below, however
  it will be helpful to put it in the context of ``adiabatic degeneration'',
  which amounts to fully neck-stretching along a continuum of energy
  levels simultaneously.
We make this precise with the following.

\begin{definition}[adiabatically degenerating almost complex structures]
  \label{DEF_adiabatically_degenerating_almost_complex_structures}
  \hfill\\
Let \(\big(\mathcal{I}_\epsilon\times M, \Omega, \hat{\lambda}\big)\) be a
  framed Hamiltonian energy pile, and let \(\hat{\rho}\), \(\xi\), and
  \(\hat{\omega}\) be the associated structures defined above, and let 
  \(\widehat{X}=X_H/\hat{\lambda}(X_H)\) as above.
Let \(\mathcal{J}=\{J_k\}_{k\in \mathbb{N}}\) be a sequence of weakly
  adapted almost complex structures in the sense of Definition
  \ref{DEF_weakly_adapted_J}, with \(\phi_k \cdot J_k \partial_s =
  \widehat{X}\), which also satisfies the following conditions.
\begin{enumerate}[($\mathcal{J}$1)]                                       
  \item 
  \emph{({Adiabatic Degeneration})}
  We require that \(\phi_k\to 0\) in \(C^\infty\) as \(k\to
  \infty\).
  \item 
  \emph{({Symplectic Area Controls Metric Area})}
  For each \(k\in \{1, 2, 3, \ldots\}\), and each  \(v\in
  T(\mathcal{I}_\epsilon\times M)\), we require that
  \begin{align*}                                                          
    (ds \wedge \hat{\lambda} + \hat{\omega})(v, J_kv )\leq 2 \Omega(v ,
    J_k v)
    \end{align*}
  \item 
  \emph{({Geometrically Bounded})}
  There exists a sequence \(\{C_n\}_{n=0}^\infty\) of positive
  constants and an auxiliary translation invariant metric \(\tilde{g}\) on
  \(\mathbb{R}\times M\) for which
  \begin{align*}                                                          
    \sup_{k\in \mathbb{N}} \| (\Psi_k)_* J_k \|_{C^n} \leq C_n
    \end{align*}
  for each \(n\in \mathbb{N}\); here \(\|\cdot \|_{C^n}\) is the
  \(C^n\)-norm on \(\mathbb{R}\times M\) with respect to the
  auxiliary metric \(\tilde{g}\), and the \(\Psi_k\) are the embeddings as
  in equation (\ref{EQ_Psi_new}).
  \end{enumerate}
We then say that \(\mathcal{J}=\{J_1, J_2, \ldots\}\) is a sequence of
  adiabatically degenerating almost complex structures adapted to the
  framed Hamiltonian energy pile given by \((\mathcal{I}_\epsilon\times M,
  \Omega, \hat{\lambda})\).
\end{definition}

We note that given a framed Hamiltonian energy pile, it is easy to
  construct a candidate sequence of adiabatically degenerating almost
  complex structures.
For example, one might fix a translation invariant \(J_\xi = J\big|_\xi\),
  and then define
  \begin{align*}                                                          
    J_k \partial_s = k \widehat{X}\qquad\text{and}\qquad J_k\big|_\xi  =
    J_\xi.
    \end{align*}
In such a candidate, one can easily see that \(\phi_k = k^{-1}\to 0\) as
  \(k\to \infty\), and in fact \((\Psi_k)_* J_k\) is independent of \(k\).
Consequently (\(\mathcal{J}1\)) and (\(\mathcal{J}3\)) are immediately
  satisfied.
However, (\(\mathcal{J}2\)) is less obvious.
That is, it is unclear if the following estimate holds for all \(J_k\):
  \begin{align*}                                                          
    (ds \wedge \hat{\lambda} + \hat{\omega})(v, J_kv )\leq 2 \Omega(v ,
    J_k v)
    \end{align*}
Geometrically, the concern is that while the almost complex structures
  \(J_k\) are all \(\Omega\)-tame, they are not \(\Omega\)-compatible.
Put another way, although the \(J_k\) preserve the splitting
  \(T(\mathcal{I}_\epsilon\times M) = \hat{\rho}\oplus \xi\), and
  \(\hat{\rho}\) and \(\xi\) are each symplectic subspaces, these
  subspaces  are not symplectic complements; that is there exist \(v\in
  \hat{\rho}\)
  and \(w\in \xi\) such that \(\Omega(v,w)\neq 0\).
That the cross terms might cause an issue is then compounded by
  the fact that we are degenerating adiabatically so that \(\phi_k\to 0\)
  as \(k\to \infty\).

We resolve this issue by making use of the fact that Lemma
  \ref{LEM_localization_to_framed_Hamiltonian_energy_pile} allows us to
  guarantee that along \(\{0\}\times M\) the sub-bundles \(\hat{\rho}\) and
  \(\xi\) \emph{are} \(\Omega\)-symplectic complements.
As it will turn out, condition (\(\mathcal{J}\)2) can then be achieved by
  first fixing \(\epsilon>0\) sufficiently small.
We will couple this with a convenient means of obtaining a sequence of
  adiabatically degenerating almost complex structures from some easily
  verified bounds and some \(C^\infty\) converging functions
  \(\phi_k\).
This is achieved via Proposition
  \ref{PROP_adiabatically_degenerating_constructions} below.

\setcounter{CounterSectionAdiabatic}{\value{section}}
\setcounter{CounterPropositionAdiabatic}{\value{lemma}}
\begin{proposition}[adiabatically degenerating constructions]
  \label{PROP_adiabatically_degenerating_constructions}
  \hfill\\
Let \(\big(\mathcal{I}_{\ell}\times M, \Omega, \hat{\lambda}\big)\) be a
  framed Hamiltonian energy pile, and let \(\hat{\rho}\), \(\xi\), and
  \(\hat{\omega}\) be the associated structures defined above, and assume,
  as in the conclusions of Lemma
  \ref{LEM_localization_to_framed_Hamiltonian_energy_pile}, that along
  \(\{0\}\times M\) the sub-bundles \(\hat{\rho}\) and \(\xi\) are
  symplectic complements and \(\hat{\lambda}(X_H)=1\).
Let \(\widehat{X}=X_H/\hat{\lambda}(X_H)\) as above.
Let \(\{\check{J}_k\}_{k\in \mathbb{N}}\) be a sequence of almost
  complex structures on \(\mathcal{I}_\ell \times M\) which satisfy the
  following conditions.
\begin{enumerate}[(D1)]                                                   
  \item 
  \(\check{J}_k\colon \hat{\rho} \to \hat{\rho}\; \) and  \(\;
  \check{J}_k\colon \xi \to \xi\) for each \(k\in \mathbb{N}\),
  \item 
  \((ds\wedge \hat{\lambda} +\hat{\omega})\circ ({\rm Id}\times
  \check{J}_k)\) is a Riemannian metric for each \(k\in \mathbb{N}\)
  \item 
  there exist constants  \(\{C_n'\}_{n\in \mathbb{N}}\) such that
  \begin{align*}                                                          
    \sup_{k\in \mathbb{N}} \|\check{J}_k\|_{C^n} \leq C_n'
    \end{align*}
  \end{enumerate}
Then there exist an \(\epsilon>0\) with the following significance.
For any sequence of functions \(\phi_k : (-\epsilon, \epsilon)\to (0, 1]\)
  which converge in \(C^\infty\) (though not necessarily to
  zero), the weakly adapted almost complex structures defined by
  \begin{align}\label{EQ_def_Jk_in_body}                                  
    \phi_k \cdot J_k \partial_s = \widehat{X}\qquad\text{and}\qquad
    J_k\big|_\xi := \check{J}_k\big|_\xi
    \end{align}
  satisfy the following properties
\begin{enumerate}[(E1)]                                                   
  \item For each \(v\in T(\mathcal{I}_{\epsilon}\times M)\) we have
  \begin{align*}                                                          
    (ds \wedge \hat{\lambda} + \hat{\omega})(v, J_kv )\leq 2 \Omega(v ,
    J_k v).
    \end{align*}
  \item 
  There exists a sequence \(\{K_n\}_{n=0}^\infty\) of positive
  constants and an auxiliary translation invariant metric \(\tilde{g}\) on
  \(\mathbb{R}\times M\) for which
  \begin{align*}                                                          
    \sup_{k\in \mathbb{N}} \| (\Psi_k)_* J_k \|_{C^n} \leq K_n
    \end{align*}
  for each \(n\in \mathbb{N}\); here \(\|\cdot \|_{C^n}\) is the
  \(C^n\)-norm on \(\mathbb{R}\times M\) with respect to the
  auxiliary translation invariant metric \(\tilde{g}\), and the \(\Psi_k\)
  are the embeddings as in equation (\ref{EQ_Psi_new}).
  \end{enumerate}
In particular, if \(\epsilon' \in (0, \epsilon)\) and
  \(\phi_k\big|_{(-\epsilon', \epsilon')}\to 0\), then the almost complex
  structures \(J_k\) are adiabatically degenerating on
  \(\mathcal{I}_{\epsilon'}\times M\) in the sense of Definition
  \ref{DEF_adiabatically_degenerating_almost_complex_structures}, and
  on \(\mathcal{I}_{\epsilon}\times M\) these almost complex structures are
  all tame.
\end{proposition}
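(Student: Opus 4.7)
The plan is to reduce both (E1) and (E2) to explicit computations using the splitting $T(\mathcal{I}_\ell \times M) = \hat{\rho}\oplus\xi$, and then to invoke smallness of $\epsilon$ to control the resulting error terms. Writing any tangent vector as $v = a\partial_s + b\widehat{X} + v^\xi$ with $v^\xi\in\xi$, one computes directly from $\phi_k\cdot J_k\partial_s = \widehat{X}$ and $J_k^2=-\mathrm{Id}$ that $J_k v = (a/\phi_k)\widehat{X} - b\phi_k\,\partial_s + \check{J}_k v^\xi$. Using $\hat{\lambda}(\widehat{X})=1$, $ds(\widehat{X})=0$, and $\xi\subset\ker(ds)\cap\ker(\hat{\lambda})$, the Riemannian form reduces to
\begin{equation*}
g_{J_k}(v,v) = \underbrace{\tfrac{a^2}{\phi_k} + b^2\phi_k}_{=:A} + \underbrace{\hat{\omega}(v^\xi,\check{J}_k v^\xi)}_{=:B},
\end{equation*}
while expanding $\Omega(v,J_k v)$ and using $\Omega(\partial_s,X_H)=1$ (so $\Omega(\partial_s,\widehat{X}) = 1/\hat{\lambda}(X_H)$) gives
\begin{equation*}
\Omega(v,J_k v) = \frac{A}{\hat{\lambda}(X_H)} + B + a\,\Omega(\partial_s,\check{J}_k v^\xi) - b\phi_k\,\Omega(v^\xi,\partial_s).
\end{equation*}
Here the crucial identity $\Omega(\widehat{X},w) = -ds(w)/\hat{\lambda}(X_H)$ vanishes for every $w\in\xi$; applied to $w=v^\xi$ and $w=\check{J}_k v^\xi\in\xi$ (by (D1)), this eliminates the two cross terms that would otherwise carry a divergent $1/\phi_k$ factor.

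For (E1), the surviving cross terms involve $\Omega$-pairings between $\partial_s$ and vectors in $\xi$, which vanish along $\{0\}\times M$ by the symplectic-complement hypothesis; they are therefore $O(|s|)\cdot|v^\xi|$ with constants uniform in $k$ by (D3). Weighted AM--GM in the form $|a|\,|v^\xi| \leq \tfrac12(a^2/\phi_k + \phi_k|v^\xi|^2)$ and $|b|\phi_k\,|v^\xi| \leq \tfrac12(b^2\phi_k^2 + |v^\xi|^2)$, combined with $\phi_k\leq 1$ and the uniform comparison $|v^\xi|^2 \leq C_1 B$ from (D2)--(D3), yields $|E(v)| \leq C|s|(A+B)$ with $C$ independent of $k$. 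I then fix $\epsilon>0$ small enough that both $\hat{\lambda}(X_H)\leq 4/3$ and $C|s|\leq 1/4$ hold on $\mathcal{I}_\epsilon\times M$, whence
\begin{equation*}
\Omega(v,J_k v) \geq \tfrac{3}{4}A + B - \tfrac{1}{4}(A+B) = \tfrac{1}{2}A + \tfrac{3}{4}B \geq \tfrac{1}{2}g_{J_k}(v,v),
\end{equation*}
which is (E1).

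For (E2), a direct chain-rule calculation with $\Psi_k(s,p) = (\psi_k(s),p)$ and $d\check{s}/ds = 1/\phi_k$ shows, on the image $\Psi_k(\mathcal{I}_\epsilon\times M)$, that $[(\Psi_k)_*J_k](\partial_{\check{s}})|_{(\check{s},p)} = \widehat{X}(\psi_k^{-1}(\check{s}),p)$, that $[(\Psi_k)_*J_k](\widehat{X}) = -\partial_{\check{s}}$, and that $[(\Psi_k)_*J_k]\big|_\xi = \check{J}_k\big|_\xi$ at the reparametrized source point. In particular the degenerating factor $1/\phi_k$ has disappeared from $(\Psi_k)_*J_k$, and every $\partial_{\check{s}}$-derivative of an $s$-dependent quantity contributes a factor of $\phi_k$ (or its derivatives) by the chain rule. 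The uniform $C^n$-boundedness of the $\phi_k$ (from $C^\infty$-convergence) and of the $\check{J}_k$ (from (D3)), together with smoothness of $\widehat{X}$ on $\overline{\mathcal{I}_\epsilon}\times M$, then delivers the uniform $C^n$ bounds required for (E2).

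The main obstacle is (E1). A naive expansion of $\Omega(v,J_k v)$ would pick up a catastrophic $(a/\phi_k)\,\Omega(\widehat{X},v^\xi)$ term coming from $J_k\partial_s = \widehat{X}/\phi_k$, which diverges as $\phi_k\to 0$; the entire argument hinges on the algebraic identity $\Omega(\widehat{X},\cdot)|_\xi \equiv 0$ that makes this dangerous term identically zero. Once this vanishing is recognized, the remaining estimates amount to $O(|s|)$ smallness combined with weighted AM--GM, and the choice of $\epsilon$ absorbs everything.
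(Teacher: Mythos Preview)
Your proof is correct and follows essentially the same approach as the paper. The paper organizes the argument through two preliminary results---a cross-term control lemma (bounding $|\Omega(v^{\hat\rho},v^\xi)|$ by $\delta\|v^{\hat\rho}\|_{g_0}\|v^\xi\|_{g_0}$ for small $\epsilon$) and a separate proposition packaging the estimate via an auxiliary $\Omega$-compatible $J_0$ and a contrived norm $|\rangle J\langle|$---whereas you compute $g_{J_k}(v,v)$ and $\Omega(v,J_kv)$ directly and isolate the same key algebraic point, namely that the potentially divergent $(a/\phi_k)\Omega(\widehat{X},v^\xi)$ term vanishes identically because $\Omega(\widehat{X},\cdot)\big|_\xi=-ds/\hat\lambda(X_H)\big|_\xi=0$. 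Your weighted AM--GM handling of the surviving $\partial_s$--$\xi$ cross terms is equivalent to the paper's estimates, and your treatment of (E2) matches the paper's brief chain-rule argument.
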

%

The proof of Proposition
  \ref{PROP_adiabatically_degenerating_constructions} is elementary but
  somewhat lengthy, so we relegate it to Appendix
  \ref{SEC_supporting_proofs}, however at present we indicate its utility.
We start by noting that a natural way to perform an adiabatic degeneration
  is to start with some fixed almost complex structure \(J\) which
  preserves the splitting \(\hat{\rho}\oplus \xi\) and for which \((ds
  \wedge \hat{\lambda} + \hat{\omega})\circ ({\rm Id} \times J)\) is a
  Riemannian metric.
This almost complex structure will then be \(\Omega\)-compatible along
  \(\{0\}\times M\), and hence \(\Omega\)-tame in some neighborhood of
  \(\{0\}\times M\).
Consequently, \(J\) can be adjusted away from \(\{0\}\times M\) so that it
  is \(\Omega\)-tame everywhere, and still satisfies our compatibility
  conditions in a neighborhood of \(\{0\}\times M\).
Proposition \ref{PROP_adiabatically_degenerating_constructions} then
  guarantees that we can find \(\phi_k\) which tend to zero in a
  neighborhood of zero so that the sequence of almost complex structures
  given by
  \begin{align*}                                                          
    \phi_k \cdot J_k\partial_s = \widehat{X}\qquad\text{and}\qquad
    J_k\big|_\xi = J\big|_\xi
    \end{align*}
  are everywhere tame, while also adiabatically degenerating a
  neighborhood  of \(\{0\}\times M\).
Among other things, this means
  \begin{align*}                                                          
    (ds \wedge \hat{\lambda} + \hat{\omega})(v, J_kv )\leq 2 \Omega(v ,
    J_k v)
    \end{align*}
  for all \(k\in \mathbb{N}\).
This latter condition is important, since by assumption the
  pseudoholomorphic curves we will study will have uniform bounds on
  \(\Omega\)-energy, which (as a consequence of the above inequality) will
  give us bounds on the area associated to the metric \((ds\wedge
  \hat{\lambda} +\hat{\omega})\circ ({\rm Id}\times J)\).

With Proposition \ref{PROP_adiabatically_degenerating_constructions}
  stated and its use outlined, we now turn our attention to proving our
  main result.
This is the topic of the next section.

\section{Proof of the Main Result}\label{SEC_main_proof}

In this section we prove Theorem \ref{THM_main_maybe}, which will be
  accomplished as follows.
In light of Lemma
  \ref{LEM_localization_to_framed_Hamiltonian_energy_pile} and Proposition
  \ref{PROP_adiabatically_degenerating_constructions}, we see that the
  general problem can essentially be reduced to a localized problem
  involving framed Hamiltonian energy piles and adiabatically degenerating
  almost complex structures.
As such, our first task is to prove Theorem
  \ref{THM_localized_almost_existence} below, which is essentially a
  localized version of Theorem \ref{THM_main_maybe}.
The proof of Theorem \ref{THM_localized_almost_existence} is somewhat
  technical and will take the bulk of this section.
After this is established, Theorem \ref{THM_main_maybe} will follow rather
  quickly. 
We now proceed with our first main technical argument.

\begin{theorem}[localized almost existence]
  \label{THM_localized_almost_existence}
  \hfill\\
Let \(\big(\mathcal{I}_\epsilon\times M, \Omega, \hat{\lambda}\big)\) be a
  framed Hamiltonian energy pile, and let \(\hat{\rho}\), \(\xi\), and
  \(\hat{\omega}\) be the associated structures.
Let \(\{J_k\}_{k\in \mathbb{N}}\) be a sequence of adiabatically
  degenerating almost complex structures in the sense of Definition
  \ref{DEF_adiabatically_degenerating_almost_complex_structures}.
Suppose that for each \(k\in \mathbb{N}\) there exists a proper
  pseudoholomorphic map \(u_k:(S_k, j_k) \to (\mathcal{I}_\epsilon\times
  M, J_k)\) without boundary such  that \(s\circ u_k(S_k)
  = \mathcal{I}_\epsilon\),  with the property that there exist no
  connected components of \(S_k\) on which \(u_k\) is the constant map.
Suppose further that there exist positive constants \(C_g\) and
  \(C_\Omega\) for which
  \begin{align*}                                                          
    {\rm Genus}(S_k) \leq C_g\qquad\text{and}\qquad \int_{S_k} u_k^*\Omega
    \leq C_\Omega.
    \end{align*}
\emph{Then} for almost every point \(s\in \mathcal{I}_\epsilon\), there
  exists a periodic orbit of the Hamiltonian vector field \(X_H\) on  the
  energy level \(\{s\}\times M\).
That is, the set \(\mathcal{I}_\epsilon'\subset \mathcal{I}_\epsilon\) of
  energy levels of \(H(s,p)=s\) which contain a Hamiltonian periodic orbit
  has full measure:
  \begin{align*}                                                          
    \mu(\mathcal{I}_\epsilon') = \mu(\mathcal{I}_\epsilon) = 2\epsilon.
    \end{align*}
\end{theorem}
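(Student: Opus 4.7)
The plan is to implement the adiabatic neck-stretching strategy sketched in the introduction. I would pass to the stretched sequence $\tilde u_k := \Psi_k \circ u_k$ using the embeddings from equation (\ref{EQ_Psi_new}), extract a feral limit via the compactness theory of \cite{FH2}, and then read off periodic orbits on almost every level from a co-area argument combined with the structure of the limit.

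For the setup and compactness step, each $\tilde u_k$ is $\tilde J_k$-holomorphic with $\tilde J_k := (\Psi_k)_* J_k$. By ($\mathcal{J}1$) the stretched target has length diverging to infinity; ($\mathcal{J}3$) provides uniform $C^n$-bounds on $\tilde J_k$ against a fixed translation-invariant auxiliary metric $\tilde g$; and ($\mathcal{J}2$) together with the $\Omega$-area bound $C_\Omega$ yields a uniform bound on the $\tilde g$-area of $\tilde u_k$. Because the neck length diverges while the $\hat\omega$-area remains finite, the Hofer energy of $\tilde u_k$ is unbounded and standard SFT compactness is inapplicable; one must instead invoke the feral curve compactness theorem of \cite{FH2}, whose hypotheses are exactly the genus bound $C_g$, a symplectic area bound, and the $C^n$-bounds on the almost complex structures. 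This produces a subsequential feral limit in the infinite cylinder $\mathbb{R}\times M$ carrying the layered translation-invariant structure $\tilde J_\infty$ whose restriction at height corresponding to $s_0\in\mathcal{I}_\epsilon$ satisfies $\tilde J_\infty \partial_{\check s} = \widehat X|_{\{s_0\}\times M}$.

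For the extraction of periodic orbits, I would apply a co-area argument to $f_k := s\circ u_k: S_k \to \mathcal{I}_\epsilon$. A direct computation from $\phi_k J_k \partial_s = \widehat X$ gives $|ds|_{g_{J_k}}^2 = \phi_k$, so combined with $\int |du_k|^2_{g_{J_k}} \le 4 C_\Omega$ (from condition ($\mathcal{J}2$) and the $\Omega$-area bound) and Cauchy--Schwarz one deduces
\begin{equation*}
\int_{\mathcal{I}_\epsilon} \mathrm{length}_{g_{J_k}}\!\bigl(f_k^{-1}(s)\bigr)\, ds \;\le\; C_1 \sqrt{\phi_k} \;\longrightarrow\; 0 .
\end{equation*}
Hence for almost every $s$, along a subsequence, the slice $f_k^{-1}(s)$ is a compact smooth one-manifold whose $g_{J_k}$-length tends to zero. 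Because $|\widehat X|_{g_{J_k}}^2 = \phi_k$ while $|\widehat X|_{g_M}$ is fixed in any auxiliary metric $g_M$ on $M$, the image of this slice in $\{s\}\times M$ has $g_M$-length bounded uniformly in $k$ and becomes $C^0$-close to an orbit arc of $\widehat X|_{\{s\}\times M}$. Passing to the limit through the feral compactness produces, for almost every $s$, a finite-length closed integral curve of $\widehat X|_{\{s\}\times M}$, i.e.\ a periodic orbit of $X_H$ on that energy level; the surjectivity of $f_k$ together with the non-constancy of each $u_k$ prevents the limit slice from collapsing to a point on a positive-measure set.

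The main obstacle is the feral compactness step: the unbounded Hofer energy genuinely necessitates the machinery of \cite{FH2} rather than classical SFT compactness, and one must carefully track how the layered translation-invariant limit $\tilde J_\infty$ records the $s$-dependence of $\widehat X$. A secondary subtlety is ensuring that the limit slice is a non-trivial closed orbit rather than a constant loop or empty set on almost every level; this is handled by keeping track of the topological degree of $f_k$ at generic regular values and using the non-constancy of the $u_k$ together with the uniform lower bound on the number of transverse intersections of $u_k$ with $\{s\}\times M$.
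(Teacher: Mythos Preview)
Your proposal has two genuine gaps. First, a single global stretching $\tilde u_k=\Psi_k\circ u_k$ cannot detect orbits on all levels: any compact window $[-R,R]\times M$ in the stretched target corresponds under $\Psi_k^{-1}$ to an interval shrinking to $\{0\}$, so whatever limit you extract carries only the translation-invariant data of the single level $s=0$; your ``layered $\tilde J_\infty$ recording the $s$-dependence of $\widehat X$'' does not exist. Second, your slice estimate controls the wrong quantity. The bound $\int_{\mathcal I_\epsilon}\mathrm{length}_{g_{J_k}}(f_k^{-1}(s))\,ds=O(\sqrt{\sup\phi_k})$ is correct, but on a slice the tangent decomposes as $b\,\widehat X+v^\xi$ with $|b\,\widehat X|_{g_{J_k}}^2=b^2\phi_k$, so $g_{J_k}$-length small together with $\phi_k$ small leaves $b$---and hence the $g_M$-length---completely uncontrolled; your sentence ``because $|\widehat X|^2_{g_{J_k}}=\phi_k$ \ldots\ the image has $g_M$-length bounded'' is a non sequitur.

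The paper's argument differs at every step. One recenters at each $s_0$ via $\Psi_k^{s_0}={\rm Sh}_{\psi_k(s_0)}\circ\Psi_k$ and works level by level. The correct slice quantity is $F_k(s):=\int_{f_k^{-1}(s)}u_k^*\hat\lambda$, controlled by the co-area identity $\int_{S_k}u_k^*(ds\wedge\hat\lambda)=\int_{\mathcal I_\epsilon}F_k(s)\,ds\le 2C_\Omega$ together with Fatou, giving $F(s_0):=\liminf_k F_k(s_0)<\infty$ almost everywhere. Independently, a diagonal-subsequence construction (Proposition~\ref{PROP_inductive_construction}) produces a countable set $L$ outside of which the $\hat\omega$-area of $u_{k_m}$ over the shrinking neighborhood $\mathcal I(s_0,k_m)$ tends to zero. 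For $s_0\notin L\cup\mathcal Y$ with $F(s_0)<\infty$, the recentred restrictions then have uniformly bounded area on $(-1,1)\times M$ (via Theorem~8 of \cite{FH2}), so ordinary \emph{target-local Gromov compactness} from \cite{Fish2} applies---the area is bounded here and no feral compactness is invoked---yielding a nodal limit with vanishing $\tilde\omega^{s_0}$-area. A maximum-principle argument (Lemma~\ref{LEM_non_trivial_limit_component_with_boundary} and the paragraph following it) then forces a nonconstant component of the limit that meets $\{0\}\times M$ and has nonempty boundary to lie in a trivial cylinder over a periodic orbit; your remarks about the topological degree of $f_k$ do not supply this closing mechanism.
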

%

In order to begin, we will first need to recall a version of the co-area
  formula as follows.

\begin{proposition}[The co-area formula]\label{PROP_co-area}\hfill\\
Let \((S, g)\) be an oriented  \(C^1\)-Riemannian manifold of
  dimension two; we allow that \(S\) need not be complete\footnote{That is,
  there may exist Cauchy sequences, with respect to \(g\), which do not
  converge in \(S\).}.
Suppose that \(\beta:S\to [a, b]\subset\mathbb{R}\) is a \(C^1\)
  function without critical points.
Let \(f:S\to [0, \infty)\) be a measurable function  with respect to
  \(d\mu_g^2\).  
Then
  \begin{equation}\label{EQ_coarea}
    \int_S f\|\nabla \beta\|_g \, d\mu_g^2 = \int_a^b \Big(
    \int_{\beta^{-1}(t)} f \, d\mu_g^1\Big) dt
    \end{equation}
  where \(\nabla\beta\) is the gradient of \(\beta\) computed with respect
  to the metric \(g\).
\end{proposition}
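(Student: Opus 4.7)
The plan is to prove the co-area formula by the classical strategy: reduce the identity to a local assertion in adapted $C^1$ coordinates, and then invoke Tonelli's theorem. Non-completeness of $S$ will play no role because the argument is purely local and glued by a countable partition of unity.

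First I would handle the local step. Since $\beta$ has no critical points, the $C^1$ implicit function theorem provides, around each $p\in S$, an open neighborhood $U_p\subset S$ together with $C^1$ coordinates $(x,y)\colon U_p\to V_p\subset\mathbb{R}^2$ in which $\beta(x,y)=y$. Write the metric components as $g_{ij}$ with $G:=\det(g_{ij})$. Standard formulas give
\begin{equation*}
d\mu_g^2 = \sqrt{G}\,dx\,dy,\qquad \|\nabla\beta\|_g = \sqrt{g^{22}},
\end{equation*}
while on each level curve $\{y=t\}\cap V_p$ the induced metric is $g_{11}\,dx\otimes dx$, so $d\mu_g^1=\sqrt{g_{11}}\,dx$. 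The elementary $2\times 2$ inverse-matrix identity $g^{22}\cdot G=g_{11}$ then yields the pointwise equality $\|\nabla\beta\|_g\sqrt{G}=\sqrt{g_{11}}$. For $f\geq 0$ supported in $U_p$, Tonelli's theorem applied in $V_p$ now gives
\begin{equation*}
\int_{U_p} f\|\nabla\beta\|_g\,d\mu_g^2
= \int_{V_p} f\sqrt{g_{11}}\,dx\,dy
= \int_a^b\!\!\Big(\!\!\int_{\beta^{-1}(t)\cap U_p}\!\! f\,d\mu_g^1\Big)dt,
\end{equation*}
which is the local co-area identity.

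To globalize, I would cover $S$ by countably many such coordinate charts $\{U_\alpha\}$ (possible because a manifold is second countable, with or without completeness), take a locally finite $C^1$ partition of unity $\{\chi_\alpha\}$ subordinate to this cover, and apply the local identity to each nonnegative function $\chi_\alpha f$. Summing over $\alpha$ and using the monotone convergence theorem (permitted because all integrands are nonnegative) interchanges the sum with both integrals and yields the global formula.

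I do not anticipate a genuine obstacle. The measurability issues are routine: $\|\nabla\beta\|_g$ is in fact continuous since $\beta$ and $g$ are $C^1$, and measurability of the inner integrals as functions of $t$ follows from Tonelli once one observes that in the local adapted coordinates the relevant integrand is jointly measurable. The slightly delicate point one might worry about in a general co-area formula, namely the behavior on the critical set of $\beta$, is entirely absent here by hypothesis; this is precisely why a direct proof via adapted coordinates succeeds without needing the full strength of Federer's theorem.
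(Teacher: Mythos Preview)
Your argument is correct: the adapted-coordinate reduction, the identity $g^{22}G=g_{11}$, Tonelli, and the partition-of-unity globalization are all sound, and your observation that the absence of critical points is what makes this elementary route work is exactly the point.

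By way of comparison, the paper does not actually supply a proof here; it simply notes that the result is well known and defers the details to Appendix~A.2 of \cite{FH2}. So you have in fact done more than the paper does at this spot. The approach you sketch is the standard one and is presumably what appears in the cited appendix as well.
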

%
\begin{proof}
This is a well known result, however the details of this specific version
  are provided in Appendix A.2 of \cite{FH2}.
\end{proof}

The co-area formula above will be used in a rather particular way, namely
  as a means of expressing \(\int_S u^*(ds\wedge \hat{\lambda})\) as a double
  integral.
More precisely, we have the following.

\begin{lemma}[co-area application]
  \label{LEM_coarea_application}
  \hfill\\
Let \(\big(\mathcal{I}_\epsilon\times M, \Omega, \hat{\lambda}\big)\) be a
  framed Hamiltonian energy pile, and let \(J\) be a weakly
  adapted almost complex structure in the sense of Definition
  \ref{DEF_weakly_adapted_J}.
Let \((u, S, j)\) be a \(J\)-holomorphic curve in \(\mathcal{I}_\epsilon
  \times M\) for which \(\partial S = \emptyset\).
\emph{Then}
  \begin{equation}\label{EQ_coarea_lambda}
    \int_{S}u^*(ds\wedge \hat{\lambda}) =\int_{\mathcal{I}_\epsilon}
    \Big( \int_{(s\circ u)^{-1}(t)\setminus \mathcal{X}}
    u^*\hat{\lambda}\Big)\, dt,
    \end{equation}
  where  \(\mathcal{X}:=\{\zeta\in S: d(s\circ u)_\zeta = 0\}\); that is,
  \(\mathcal{X}\) is the set of critical points of the function \(s\circ
  u: S \to \mathcal{I}_\epsilon \subset \mathbb{R}\).
\end{lemma}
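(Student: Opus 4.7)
The plan is to apply the co-area formula (Proposition~\ref{PROP_co-area}) to $\beta := s\circ u$ on the open set $S\setminus \mathcal{X}$, equipped with the pullback metric $g := u^*g_J$.

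First I would dispense with the critical set. At any $\zeta \in \mathcal{X}$ we have $Tu(T_\zeta S)\subset \ker(ds)$; writing
\[
u^*(ds\wedge\hat\lambda)(e_1,e_2) \;=\; ds(Tu\cdot e_1)\,\hat\lambda(Tu\cdot e_2) \;-\; ds(Tu\cdot e_2)\,\hat\lambda(Tu\cdot e_1),
\]
both $ds$-factors vanish, so $u^*(ds\wedge\hat\lambda) \equiv 0$ pointwise on $\mathcal{X}$ and therefore $\int_S u^*(ds\wedge\hat\lambda) = \int_{S\setminus\mathcal{X}} u^*(ds\wedge\hat\lambda)$. On $S\setminus\mathcal{X}$ the function $\beta$ has no critical points by definition, and since a pseudoholomorphic map has pointwise rank $0$ or $2$, the non-vanishing of $d\beta$ forces $u$ to be an immersion there; hence $g = u^*g_J$ is a genuine (possibly incomplete) Riemannian metric, which is exactly the generality in which Proposition~\ref{PROP_co-area} is stated.

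Next I would compute the integrands. At any $\zeta\in S\setminus\mathcal{X}$ pick the oriented orthonormal frame $e_1 := \nabla\beta/\|\nabla\beta\|_g$ and $e_2 := je_1$, and set $v := Tu\cdot e_1$, so pseudoholomorphicity gives $Tu\cdot e_2 = Jv$. Decompose $v = a\,\partial_s + b\,\widehat X + v^\xi$ using $T(\mathcal{I}_\epsilon\times M) = \mathbb{R}\partial_s\oplus \mathbb{R}\widehat X\oplus \xi$. The weak-adaptedness conditions of Definition~\ref{DEF_weakly_adapted_J} give $J\partial_s = \phi^{-1}\widehat X$, and $J^2=-\mathds{1}$ forces $J\widehat X = -\phi\,\partial_s$. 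Combined with (H1) and the normalization $\hat\lambda(\widehat X) = 1$, one computes $\hat\lambda(v) = b$, $\hat\lambda(Jv)= a/\phi$, $ds(v) = a = \|\nabla\beta\|_g$, and $ds(Jv) = 0$. Therefore
\[
u^*(ds\wedge\hat\lambda) \;=\; f\,\|\nabla\beta\|_g\, d\mu_g^2
\quad\text{where}\quad
f(\zeta):=\hat\lambda(Tu\cdot e_2) = \frac{a}{\phi} \;\geq\; 0.
\]
Moreover $e_2 = je_1$ is precisely the positively oriented unit tangent to $\beta^{-1}(\beta(\zeta))\setminus\mathcal{X}$ with respect to the orientation induced on the level set from $(S,j)$, so $u^*\hat\lambda\big|_{\beta^{-1}(t)\setminus\mathcal{X}} = f\, d\mu_g^1$ as a signed $1$-form on the oriented level curve.

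With these two identifications Proposition~\ref{PROP_co-area} applied to the nonnegative function $f$ yields
\[
\int_{S\setminus\mathcal{X}} u^*(ds\wedge \hat\lambda) \;=\; \int_{\mathcal{I}_\epsilon}\!\Big( \int_{\beta^{-1}(t)\setminus\mathcal{X}} u^*\hat\lambda\Big)\, dt,
\]
which together with the first step is equation~(\ref{EQ_coarea_lambda}). The main technical point is really the sign computation that produces $f = a/\phi \geq 0$: without nonnegativity one would have to split $f = f^+ - f^-$ and worry about finiteness of each piece on the non-compact and possibly incomplete manifold $S\setminus\mathcal{X}$. The pseudoholomorphicity combined with weak-adaptedness is exactly what sidesteps this, making both sides of the co-area identity unambiguously defined in $[0,\infty]$ without any compactness or completeness hypothesis on $S$ or on the level sets.
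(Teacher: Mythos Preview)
Your proof is correct and follows essentially the same route as the paper: remove the critical set, pull back the metric $g_J$ to get a (possibly incomplete) metric on $S\setminus\mathcal X$, choose the oriented orthonormal frame $e_1=\nabla\beta/\|\nabla\beta\|$, $e_2=je_1$, and apply the co-area formula with $f=\hat\lambda(Tu\cdot e_2)=a/\phi$. Your presentation is in fact a bit more explicit than the paper's in two places: you justify why $u$ is an immersion on $S\setminus\mathcal X$ (rank $0$ or $2$), and you spell out that $f\ge 0$ is what makes Proposition~\ref{PROP_co-area} apply without any finiteness caveats; the paper leaves both points implicit. One small remark: the equality $ds(Jv)=0$ that you assert is not a consequence of the decomposition $v=a\partial_s+b\widehat X+v^\xi$ alone (that only gives $ds(Jv)=-\phi b$), but rather of the frame choice, since $e_2\perp_g\nabla\beta$ forces $d\beta(e_2)=0$; this is exactly what then yields $b=0$ and makes the rest of your computation go through.
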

%
\begin{proof}
Define \(\widetilde{S}:=S\setminus\mathcal{X}\), which is a manifold
  since \(\widetilde{S}\subset S\) is open.
Observe that by definition of \(\mathcal{X}\) it follows that \(u^*(ds
  \wedge \hat{\lambda}) \big|_{\mathcal{X}}\equiv 0\), so
    \begin{align*}                                                        
    \int_{S}u^*(ds \wedge \hat{\lambda})= \int_{\widetilde{S}}
	u^*(ds \wedge \hat{\lambda}).
    \end{align*}
Since \(u:\widetilde{S}\to \mathcal{I}_\epsilon\times M\) is an immersion,
  we may define the metric \(\gamma=u^*g_J\) where \(g_J\) is the
  Riemannian metric as in equation (\ref{EQ_Riemannian_metric}) in
  Definition \ref{DEF_weakly_adapted_J}; note that \(J\) is a
  \(g_J\)-isometry.
The almost complex structure \(j\) on \(S\) induces an orientation on
  \(\widetilde{S}\), and hence we have
  \begin{equation}\label{EQ_int_a_lambda}
      \int_{\widetilde{S}} u^*(ds \wedge \hat{\lambda})=
      \int_{\widetilde{S}} u^*(ds\wedge \hat{\lambda}\big)(\nu, \tau)
      d\mu_{\gamma}^2,
    \end{equation}
  where \((\nu, \tau)\) is a positively oriented \(\gamma\)-orthonormal
  frame field, and \(d\mu_{\gamma}^2\) is the volume form on
  \(\widetilde{S}\) associated to the metric \(\gamma\).  
This observation is elementary, however details are provided in Appendix
  A.2 of \cite{FH2}.
Note that equation (\ref{EQ_int_a_lambda}) holds for arbitrary orthonormal
  frame field \((\nu, \tau)\), however we shall  henceforth make use of the
  following particular frame.
\begin{equation}\label{EQ_nu_tau}
    \nu:= \frac{\nabla (s\circ u)}{\|\nabla (s\circ
    u)\|_{\gamma}}\qquad\text{and}\qquad \tau:= j \nu.
  \end{equation}
Because \(u:S\to \mathcal{I}_\epsilon\) is a \(J\)-holomorphic map and
  \(J\) is a \(g_J\)-isometry, it follows that \(j\) is a
  \(\gamma\)-isometry.
Also note that for each \(v^\xi\in \xi\) and \(a,b\in \mathbb{R}\), we have
  \begin{align*}                                                          
    J\big(a \partial_s  + b\widehat{X} +v^\xi\big) 
    =   
    \frac{a}{\phi} \widehat{X}   - \phi b\partial_s+J v^\xi
    \end{align*}
  with \(Jv^\xi \in \xi\), and hence
  \begin{align*}                                                          
    ds\circ J = -\phi \hat{\lambda} 
    \qquad\text{and}\qquad
    \hat{\lambda}\circ J =  \frac{1}{\phi} ds.
    \end{align*}
With \(\nu\) and \(\tau\) as in equation (\ref{EQ_nu_tau}), it is then
  straightforward to verify the following.
\begin{align*}
    0 &= (u^*\hat{\lambda})(\nu) = u^*ds(\tau) \\
    0 &< u^*\big({\textstyle \frac{1}{\phi}}ds\big) (\nu) =
    (u^*\hat{\lambda})(\tau)\\
    1 &= \|\tau\|_{\gamma}^2 =\|\nu\|_{\gamma}^2 
  \end{align*}
Also,
  \begin{equation}\label{EQ_gradient_relationship}
      \|\nabla (s\circ u)\|_{\gamma} = \sup_{\substack{ x\in T_\zeta S\\
      \|x\|_{\gamma}=1}} d(s\circ u)(x) =\sup_{\substack{ x\in T_\zeta
      S\\ \|x\|_{\gamma}=1}} ds(Tu\cdot x) =u^*ds(\nu),
    \end{equation}
  and
  \begin{equation}\label{EQ_tau_nu_2}
      (\phi\circ u)\, \|u^*\hat{\lambda} \|_{\gamma} =u^*(\phi
      \,\hat{\lambda})(\tau)=-u^*ds (jj\nu)=\|\nabla(s\circ u)\|_{\gamma}.
    \end{equation} 

With \((\nu, \tau)\) defined as such, we have the following.  
\begin{align*}
    \int_{\widetilde{S}} u^*(ds\wedge \hat{\lambda})(\nu, \tau)
      d\mu_{\gamma}^2
    &=\int_{\widetilde{S}} ds(Tu\cdot\nu )  \hat{\lambda}(Tu \cdot \tau)
      d\mu_{\gamma}^2\\
    &=\int_{\widetilde{S}}{\textstyle\frac{1}{\phi\circ u}} \|\nabla
      (s\circ u)\|_{\gamma}^2 d\mu_{\gamma}^2\\
  \end{align*}

We employ Lemma \ref{LEM_coarea_application} on \(\widetilde{S}\) with
  \(\beta=s\circ u\), and \(f=\frac{1}{\phi\circ u}\|\nabla
  (s\circ u)\|_{\gamma}\) to obtain
  \begin{align*}
      \int_{\widetilde{S}}{\textstyle \frac{1}{\phi\circ u}}\|\nabla
        (s\circ u)\|_{\gamma}^2 d\mu_{\gamma}^2
      &=\int_{\mathcal{I}_\epsilon}\Big( \int_{(s\circ u)^{-1}(t)\setminus
	\mathcal{X}} {\textstyle \frac{1}{\phi \circ u}} \|\nabla (s\circ
	u)\|_{\gamma}\, d\mu_{\gamma}^1\Big) dt\\
      &= \int_{\mathcal{I}_\epsilon} \Big( \int_{(s\circ u)^{-1}(t)\setminus
	\mathcal{X} }  (u^*\hat{\lambda})(\tau)\, d\mu_{\gamma}^1\Big) dt\\
      &=\int_{\mathcal{I}_\epsilon} \Big( \int_{(s\circ u)^{-1}(t)\setminus
	\mathcal{X}} u^*\hat{\lambda} \Big) dt,
    \end{align*}
  and hence by combining equalities we have
  \begin{equation*}
      \int_{S}u^*(ds \wedge \hat{\lambda})=\int_{\mathcal{I}_\epsilon} \Big(
      \int_{(s\circ u)^{-1}(t)\setminus \mathcal{X}}
      u^*\hat{\lambda}\Big) dt,
    \end{equation*}
  which establishes equation (\ref{EQ_coarea_lambda}).
\end{proof}

With these preliminaries established, our next main task is to carefully
  pass to a certain subsequence of our almost complex structures and
  pseudoholomorphic curves.
To that end, we first recall that \(\phi_k\cdot J_k \partial_s =
  \widehat{X}=X_H/\hat{\lambda}(X_H)\)
  where \(\{\phi_k\}_{k\in \mathbb{N}}\) is a sequence of positive functions
  converging to zero in \(C^\infty\).
Next we define the following intervals.
For each \(t\in \mathcal{I}_\epsilon\), and for each sufficiently large
  \(k\in \mathbb{N}\) define the open interval
  \begin{align*}                                                          
    \mathcal{I}(t,k) = (t_0, t_1) = \{a \in \mathbb{R} : t_0 < a < t_1\}
    \end{align*}
  where
  \begin{align*}                                                          
    \int_t^{t_1} \frac{1}{\phi_k(s)} ds = 1 = \int_{t_0}^{t}
    \frac{1}{\phi_k(s)} ds.
    \end{align*}
Introduce the map
  \begin{align*}                                                          
    &{\rm Sh}_c \colon \mathbb{R}\times M \to \mathbb{R}\times M
    \\
    &{\rm Sh}_c (s, p) = (s-c, p).
    \end{align*}

\begin{lemma}[the $\Psi_k^t$ are diffeomorphisms]
  \label{LEM_Psi_are_diffeos}
  \hfill\\
We introduce the maps $\Psi^t_k$ defined by 
  \begin{align}\label{EQ_Psi_k_t}                                         
    \Psi_k^t = {\rm Sh}_{\psi_k(t)}\circ \Psi_k
    \end{align}
  where  \(\Psi_k\) and \(\psi_k\) are respectively the maps given in
  equations \emph{(\ref{EQ_Psi_new})} and \emph{(\ref{EQ_psi_new})}.
Then $\Psi^t_k$ induces a diffeomorphism
  \begin{align*}                                                          
    \Psi_k^t\colon  \mathcal{I}(t,k)\times M \rightarrow (-1,1)\times M.
    \end{align*}
  of the form $(s,p)\rightarrow (g_k(s),p)$ with \(\Psi^t_k(t,p)=(0,p)\).
\end{lemma}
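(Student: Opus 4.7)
The plan is to unwind the definitions and reduce to the one-dimensional claim that $g_k(s) := \psi_k(s)-\psi_k(t)$ is a smooth diffeomorphism $(t_0,t_1)\to(-1,1)$; then the product structure of $\Psi_k$ and ${\rm Sh}_c$ automatically lifts this to a diffeomorphism of $\mathcal{I}(t,k)\times M$ onto $(-1,1)\times M$ of the required form.

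First I would simply compute the composition. By equation~(\ref{EQ_Psi_new}), $\Psi_k(s,p)=(\psi_k(s),p)$, and by the definition of the shift, ${\rm Sh}_{\psi_k(t)}(\sigma,p)=(\sigma-\psi_k(t),p)$. Hence
\begin{equation*}
\Psi_k^t(s,p)=\big(\psi_k(s)-\psi_k(t),\,p\big).
\end{equation*}
Setting $g_k(s):=\psi_k(s)-\psi_k(t)$ exhibits $\Psi_k^t$ as a map of the stated form $(s,p)\mapsto(g_k(s),p)$, and clearly $g_k(t)=0$, so $\Psi_k^t(t,p)=(0,p)$.

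Next I would verify that $g_k$ maps $(t_0,t_1)$ bijectively onto $(-1,1)$. Since $\phi_k$ takes values in $(0,1]$, the function $\psi_k(s)=\int_0^s \phi_k(r)^{-1}\,dr$ is smooth with derivative $\psi_k'(s)=1/\phi_k(s)>0$, so $g_k$ is smooth and strictly increasing, with $g_k'=1/\phi_k$ nowhere vanishing. The endpoint identities
\begin{equation*}
g_k(t_1)=\psi_k(t_1)-\psi_k(t)=\int_t^{t_1}\frac{ds}{\phi_k(s)}=1,\qquad g_k(t_0)=-\int_{t_0}^t\frac{ds}{\phi_k(s)}=-1
\end{equation*}
follow directly from the defining equations of $\mathcal{I}(t,k)=(t_0,t_1)$. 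Strict monotonicity together with these endpoint values shows $g_k:(t_0,t_1)\to(-1,1)$ is a smooth bijection, and since $g_k'\ne 0$ the inverse function theorem gives $g_k^{-1}\in C^\infty$. Thus $g_k$ is a diffeomorphism.

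Finally, because $\Psi_k^t=g_k\times{\rm id}_M$ on $\mathcal{I}(t,k)\times M$, it is immediate that $\Psi_k^t$ is a smooth bijection onto $(-1,1)\times M$ with smooth inverse $(\sigma,p)\mapsto(g_k^{-1}(\sigma),p)$, completing the proof. There is really no obstacle here: the content of the lemma is essentially that $\mathcal{I}(t,k)$ was defined precisely so that the shifted primitive $\psi_k-\psi_k(t)$ lands bijectively in $(-1,1)$.
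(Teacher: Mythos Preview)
Your proof is correct and follows essentially the same approach as the paper: both compute $\Psi_k^t(s,p)=\big(\int_t^s \phi_k^{-1},\,p\big)$ and then use the defining conditions of $\mathcal{I}(t,k)=(t_0,t_1)$ to conclude. Your version simply spells out the monotonicity and endpoint details that the paper's proof leaves to the phrase ``It follows immediately.''
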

%
\begin{proof}
By definition 
  \[
  \Psi_k(s,p)=\left(\int_0^s\frac{1}{\phi_k(\tau)}d\tau,p\right) =
  \left(\int_0^t\frac{1}{\phi_k(\tau)}d\tau
  +\int_t^s\frac{1}{\phi_k(\tau)}d\tau,p\right).
  \]
  Hence 
  \[
  \Psi_k(s,p)=\left(\psi_k(t) +\int_t^s\frac{1}{\phi_k(\tau)}d\tau,p\right)
  \]
  which implies
  \[
 \Psi^t_k(s,p)= {\rm
 Sh}_{\psi_k(t)}\circ\Psi_k(s,p)=
 \left(\int_t^s\frac{1}{\phi_k(\tau)}d\tau,p\right).
  \]
It follows immediately that $\Psi^t_k$ has image $(-1,1)\times M$ and is
  a diffeomorphism.
\end{proof}

As we have seen,  when \(k\) is large, \(\phi_k\) is close to zero, and    
  \(\Psi_k\) then ``stretches the neck'' to undo the partial degeneration
  done by \(\phi_k\) (via the condition that \(\phi_k \cdot J_k \partial_s
  = \widehat{X}\)); the shift function, \({\rm Sh}\), then
  \(\mathbb{R}\)-shifts in the target to put the image of \(t\in
  \mathcal{I}(t,k)\) at \(0\in (-1, 1)\).
More concisely then, \(\mathcal{I}(t,k)\) is an open neighborhood of
  \(t\in \mathcal{I}_\epsilon\), which when neck-stretched and shifted 
  becomes the standard interval \((-1,1)\).

  Under the hypotheses of Theorem \ref{THM_localized_almost_existence}  
we are given a sequence of pseudoholomorphic curves 
\[
u_k\colon (S_k,j_k)\rightarrow (\mathcal{I}_{\varepsilon}\times M,J_k)
\]
where we have a genus bound $C_g$ and a symplectic bound:
\begin{eqnarray}\label{EQN13}
g(S_k)\leq C_g\ \ \textrm{and}\ \  \int_{S_k} u^{\ast}_k\Omega\leq C_{\Omega}.
\end{eqnarray}
Note that we obtain from this, because each of the two-forms \(ds\wedge
  \hat{\lambda}\) and \(\hat{\omega}\) evaluate non-negatively on
  \(J_k\)-complex lines, the following estimate:
  \begin{eqnarray}  \label{EQN133}                                                   
 & \int_{S_k} u^*_k(ds\wedge\hat{\lambda})+  \int_{S_k} u_k^*
 \hat{\omega}=\int_{S_k} u_k^*(ds \wedge \hat{\lambda}+ \hat{\omega})
    \leq 2\int_{S_k} u_k^*\Omega   \leq 2C_\Omega.&
    \end{eqnarray}
Note that the first inequality follows from the assumption that the
  \(J_k\) are adiabatically degenerating; (see Definition
  \ref{DEF_adiabatically_degenerating_almost_complex_structures} and more
  specifically condition \(\mathcal{J}\)2).
Define $\bar{C}_{\Omega}:=2\cdot C_{\Omega}$ so that 
\begin{eqnarray}\label{EQN136}
  \int_{S_k} u_k^* \hat{\omega}\leq\bar{C}_{\Omega}\ \ \text{and}\ \
  \int_{S_k} u^*_k(ds\wedge\hat{\lambda})\leq \bar{C}_{\Omega}.
  \end{eqnarray} 
Our goal is the rather careful construction of  subsequences 
  with a certain number of good properties. 
To that end, it will be helpful to recall that if \(\{x_k\}_{k\in
  \mathbb{N}}\) is a sequence of points, then a subsequence can be
  specified using a strictly increasing function \(f\colon\mathbb{N}\to
  \mathbb{N}\) by writing \(\{x_{f(k)}\}_{k\in \mathbb{N}}\).
A further subsequence can be defined using a strictly increasing function
  \(h\colon \mathbb{N}\to f(\mathbb{N})\) giving \(\{x_{h (k)}\}_{k\in
  \mathbb{N}}\).
The following  result is at the heart of our further constructions.
We shall write ${\mathbb N}_0$ for the union of ${\mathbb N}$ and $\{0\}$.

\begin{proposition}[the key inductive construction]
  \label{PROP_inductive_construction}
  \hfill\\
Given a sequence of pseudoholomorphic curves 
\[
u_k\colon (S_k,j_k)\rightarrow (\mathcal{I}_{\varepsilon}\times M,J_k)
\]
satisfying the bound  {\emph{(\ref{EQN13})}} there exists a  sequence
${(f_m)}_{m\in {\mathbb N}_0}$ of strictly increasing maps $f_m:{\mathbb
N}\rightarrow {\mathbb N}$ and a sequence ${(L_m)}_{m\in {\mathbb N}_0}$
of finite sets $L_m\subset \emph{\textrm{cl}}(\mathcal{I}_{\varepsilon})$
with the following properties.
\begin{itemize}
  \item[(1)] $L_0=\emptyset$ and $f_0(k)=k$ for all $k\in {\mathbb N}$.
  \item[(2)] $L_{m-1}\subset L_m$ for $m\in {\mathbb N}$
  \item[(3)] $f_m:{\mathbb N}\rightarrow f_{m-1}({\mathbb N})$ and
  $L_{m-1}\subset L_m$ for $m\in {\mathbb N}$.
  \item[(4)] For $m\in {\mathbb N}_0$ the following holds. Given $t\in
  L_m\cap \mathcal{I}_{\varepsilon}$ there exists a sequence
  $(\tau_k)\subset \mathcal{I}_{\varepsilon}$ converging to $t$ such that
  the following limit exists and satisfies the inequality
  \[
  \lim_{k\rightarrow\infty} \int_{{(s\circ
  u_{f_m(k)})}^{-1}(\mathcal{I}(\tau_k,f_m(k)))}
  u^{\ast}_{f_m(k)}\hat{\omega}> \frac{\bar{C}_{\Omega}}{2^m}.
  \]
  \item[(5)] For $m\in {\mathbb N}_0$ and given $t\in
  \mathcal{I}_{\varepsilon}\setminus L_{m}$ there exists no sequence
  $\tau_k\rightarrow t$ such that
  \[
  \limsup_{k\rightarrow \infty}  \int_{{(s\circ
  u_{f_m(k)})}^{-1}(\mathcal{I}(\tau_k,f_m(k)))}
  u^{\ast}_{f_m(k)}\hat{\omega}>  \frac{\bar{C}_{\Omega}}{2^m}
  \]
  \end{itemize}
\end{proposition}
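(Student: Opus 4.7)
The plan is to argue by induction on $m\in\mathbb{N}_0$. For the base case $m=0$ I would set $L_0 := \emptyset$ and $f_0 := \mathrm{id}_{\mathbb{N}}$; properties (1)--(3) are then immediate and (4) is vacuous. Property (5) at level $0$ says that no sequence $\tau_k \to t$ can produce a limsup of the localized integrals exceeding $\bar{C}_\Omega/2^0 = \bar{C}_\Omega$, which follows at once from the global bound
\[
\int_{(s\circ u_k)^{-1}(\mathcal{I}(\tau_k,k))} u_k^*\hat\omega \;\leq\; \int_{S_k} u_k^*\hat\omega \;\leq\; \bar{C}_{\Omega}
\]
recorded in (\ref{EQN136}).

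For the inductive step, assume $(f_{m-1}, L_{m-1})$ has been produced and abbreviate
\[
B_k(\tau) \;:=\; \int_{(s\circ u_{f_{m-1}(k)})^{-1}(\mathcal{I}(\tau,\, f_{m-1}(k)))} u_{f_{m-1}(k)}^*\hat\omega.
\]
I would run a greedy extraction. Start with $g_0 := \mathrm{id}_{\mathbb{N}}$ and $N_0 := L_{m-1}$; at stage $i \geq 1$ check whether there exist $t \in \mathrm{cl}(\mathcal{I}_{\varepsilon})\setminus N_{i-1}$ and a sequence $\tau_k \to t$ with $\limsup_{k\to\infty} B_{g_{i-1}(k)}(\tau_k) > \bar{C}_{\Omega}/2^m$. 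If none exists, halt and declare $f_m := f_{m-1}\circ g_{i-1}$ and $L_m := N_{i-1}$. Otherwise pick such a $t^{(i)}$ and sequence $\tau^{(i)}_k$, pass to a further subsequence $g_i \subset g_{i-1}$ along which the $\limsup$ is realized as a genuine limit still exceeding $\bar{C}_{\Omega}/2^m$, and set $N_i := N_{i-1}\cup\{t^{(i)}\}$.

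The principal obstacle, and the only nontrivial part of the argument, is showing this greedy procedure halts after at most $2^m$ stages. The key input is the adiabatic degeneration hypothesis ($\mathcal{J}$1), which gives $\phi_k \to 0$ in $C^\infty$, whence the defining relation $\int_\tau^{t_1} \phi_k(s)^{-1}\, ds = 1$ yields $|\mathcal{I}(\tau, f_{m-1}(k))| \leq 2\sup \phi_{f_{m-1}(k)} \to 0$ uniformly in $\tau$. Consequently, if the algorithm has produced $N$ distinct points $t^{(1)},\ldots,t^{(N)}$, then for $k$ sufficiently large the intervals $\mathcal{I}(\tau^{(i)}_{g_N(k)},\,g_N(k)) \subset \mathcal{I}_{\varepsilon}$ are pairwise disjoint, and so are their $(s\circ u_{g_N(k)})$-preimages. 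Since $\hat\omega$ evaluates non-negatively on $J_{g_N(k)}$-complex lines (as recorded in the discussion preceding (\ref{EQN133})), additivity of integrals yields
\[
\sum_{i=1}^{N} B_{g_N(k)}\!\big(\tau^{(i)}_{g_N(k)}\big) \;\leq\; \int_{S_{g_N(k)}} u_{g_N(k)}^*\hat\omega \;\leq\; \bar{C}_{\Omega}.
\]
Because $g_N \subset g_i$ for every $i\leq N$, each summand converges to a value strictly greater than $\bar{C}_{\Omega}/2^m$, so $N\bar{C}_{\Omega}/2^m < \bar{C}_{\Omega}$ and therefore $N < 2^m$, forcing termination.

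Once termination is in hand, the remaining verifications are routine. Properties (2) and (3) follow from the nested structure of the $g_i$'s together with the inclusions $N_{i-1}\subset N_i$. Property (5) is exactly the halting condition. For property (4), if $t\in L_{m-1}$ one applies (4) at level $m-1$ to obtain $\tau_k \to t$ with $\lim > \bar{C}_{\Omega}/2^{m-1} > \bar{C}_{\Omega}/2^m$ along $f_{m-1}$, then restricts to the subsequence picked out by $g_N$; if $t = t^{(i)}\in L_m\setminus L_{m-1}$, the sequence $\tau^{(i)}_{g_N(k)}$ built at stage $i$ suffices, since convergence of $B_{g_i(k)}(\tau^{(i)}_{g_i(k)})$ is inherited by any further subsequence. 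The entire construction thus reduces to the uniform shrinkage $|\mathcal{I}(\tau, k)|\to 0$ supplied by the adiabatic hypothesis; without it, the set of bad points could a priori be infinite and no such finite $L_m$ would exist.
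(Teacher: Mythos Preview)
Your proof is correct and follows essentially the same approach as the paper: an inductive greedy extraction of ``bad'' points at each threshold level $\bar{C}_\Omega/2^m$, with termination forced by the disjointness of the shrinking intervals $\mathcal{I}(\tau,k)$ together with the global $\hat\omega$-energy bound. The only cosmetic differences are that you make the interval-shrinkage estimate $|\mathcal{I}(\tau,k)|\leq 2\sup\phi_k$ explicit and search for bad points in $\mathrm{cl}(\mathcal{I}_\varepsilon)$ rather than $\mathcal{I}_\varepsilon$, neither of which affects the argument.
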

%
\begin{proof}
We now begin an inductive  process of constructing a sequence of nested
  subsequences.
The start of the inductive construction is obvious. We define
  $L_0:=\emptyset$ and $f_0:{\mathbb N}\rightarrow {\mathbb N}$ by
  $f_0(k)=k$ for $k\in {\mathbb N}$. Then Item (1) holds and for $m=0$
  also Item (4) is trivially satisfied since $L_0\cap
  \mathcal{I}_{\varepsilon}=\emptyset$.
Again for $m=0$ we see that $\mathcal{I}_{\varepsilon}\setminus
  L_0=\mathcal{I}_{\varepsilon}$.
If take an element $t\in \mathcal{I}_{\varepsilon}$ we see that the
  symplectic bound (\ref{EQN136}) implies that there is no sequence
  $\tau_k\rightarrow t$ with the property (5).
The statements of Items (2) and (3) are only relevant for \(m\geq 1\).
Hence, with the choices we have made all relevant statements hold for
  \(m=0\).

Let us assume that for some $m\in {\mathbb N}$ we have carried out  the
  constructions of \(L_i\) and \(f_i\) for \(i= 0,...,m-1\) so that our
  (relevant) statements in Items (1)--(5) hold for \(i=1,...,m-1\).
We shall now construct $f_m$ and $L_m$.
Specifically we note that the construction will be made so that
  \begin{align*}                                                          
    f_m\colon\mathbb{N} \to f_{m-1}(\mathbb{N})\qquad\text{and}\qquad
    L_{m-1} \subset L_m.
    \end{align*}
Since $\bar{C}_{\Omega}/2^m <\bar{C}_{\Omega}/2^{m-1}$ it follows from
  Item (4) (for the case $m-1$) that for every $t\in
  L_{m-1}\cap\mathcal{I}_{\varepsilon}$ there exists a sequence $
  \tau_k\rightarrow t$ such that the following limit exists and satisfies
  the given inequality
  \[
  \lim_{k\rightarrow\infty} \int_{{(s\circ
  u_{f_{m-1}(k)})}^{-1}(\mathcal{I}(\tau_k,f_{m-1}(k)))}
  u^{\ast}_{f_{m-1}(k)}\hat{\omega}> \frac{\bar{C}_{\Omega}}{2^{m}}.
  \]
Of course, this remains true if we pass to a subsequence.    
  By the inductive construction we know that for a given $t\in
  \mathcal{I}_{\varepsilon}\setminus L_{m-1}$ there does not exist
  a sequence $\tau_k\rightarrow t$ with the property 
\begin{eqnarray}\label{EQN14}
  \limsup_{k\rightarrow \infty}  \int_{{(s\circ
  u_{f_{m-1}(k)})}^{-1}(\mathcal{I}(\tau_k,f_{m-1}(k)))}
  u^{\ast}_{f_{m-1}(k)}\hat{\omega}> \frac{\bar{C}_{\Omega}}{2^{m-1}}.
  \end{eqnarray}
However, there might be for such a $t$  a sequence if we replace the
  right-hand side by the smaller number  $\frac{\bar{C}_{\Omega}}{2^{m}}$.
Thus we consider the possible cases.
Assume first that for every $t\in \mathcal{I}_{\varepsilon}\setminus
  L_{m-1}$ there is no sequence for which
\begin{eqnarray}\label{EQN141}
  \limsup_{k\rightarrow \infty}  \int_{{(s\circ
  u_{f_{m-1}(k)})}^{-1}(\mathcal{I}(\tau_k,f_{m-1}(k)))}
  u^{\ast}_{f_{m-1}(k)}\hat{\omega}> \frac{\bar{C}_{\Omega}}{2^{m}}
  \end{eqnarray}
  holds.
In this case we define $L_m:=L_{m-1}$ and $f_{m}(k)=f_{m-1}(k)$. 
Then $f_m:{\mathbb N}\rightarrow f_{m-1}({\mathbb N})$ and $L_{m-1}\subset
  L_m$.
Moreover Items (2)-(5) hold for $m$ and the construction for the $m$-case
  is complete.
    
Assume next we find a $t_{m,1}\in \mathcal{I}_{\varepsilon}\setminus
  L_{m-1}$ for which we have a sequence $\tau_k\rightarrow t_{m,1}$
  for which (\ref{EQN141}) holds. 
Then we take a subsequence $h_{m,1}:{\mathbb N}\rightarrow
  f_{m-1}({\mathbb N})$ for which
\begin{eqnarray}\label{EQN15}
\lim_{k\rightarrow \infty}  \int_{{(s\circ
u_{h_{m,1}(k)})}^{-1}(\mathcal{I}(\tau_{h_{m,1}(k)},h_{m,1}(k)))}
u^{\ast}_{h_{m,1}(k)}\hat{\omega}> \frac{\bar{C}_{\Omega}}{2^{m}}.
\end{eqnarray}
Next we can ask if we find a $t_{m,2}\in
  \mathcal{I}_{\varepsilon}\setminus (L_{m-1}\cup \{t_{m,1}\})$ for which we
  have a sequence $\tau_k\rightarrow t_{m,2}$ such that
\[
\limsup_{k\rightarrow \infty}  \int_{{(s\circ
u_{h_{m,1}(k)})}^{-1}(\mathcal{I}(\tau_{k},h_{m,1}(k)))}
u^{\ast}_{h_{m,1}(k)}\hat{\omega}> \frac{\bar{C}_{\Omega}}{2^{m}}
\]
If that is not the case we define $f_m(k)=h_{m,1}(k)$ and
  $L_m=L_{m-1}\cup\{t_{m,1}\}$. 
One easily verifies (2)-(5) and the construction for the $m$-case is complete.

Otherwise we find a $t_{m,2}\in  \mathcal{I}_{\varepsilon}\setminus
  (L_{m-1}\cup \{t_{m,1}\})$ with the above mentioned property and we can
  take $h_{m,2}:{\mathbb N}\rightarrow h_{m,1}({\mathbb N})$ so that the
  left-hand limit exists and is greater than $\bar{C}_{\Omega}/2^{m}$.
Again we ask if we find a third point $t_{m,3}\in
  L_{m-1}\cup\{t_{m,1},t_{m,2}\}$ with the same property. 
If that is not the case we define $L_m=L_{m-1}\cup\{t_{m,1},t_{m,2}\}$ and
  $f_m(k)=h_{m,2}(k)$. 
Again one verifies Items (2)-(5).
Otherwise we obtain a sequence $\tau_k\rightarrow t_{m,3}$ and
  $h_{m,3}:{\mathbb N}\rightarrow h_{m,2}({\mathbb N})$ such that 
  \[
  \limsup_{k\rightarrow \infty}  \int_{{(s\circ
  u_{h_{m,3}(k)})}^{-1}(\mathcal{I}(\tau_{k},h_{m,3}(k)))}
  u^{\ast}_{h_{m,3}(k)}\hat{\omega}> \frac{\bar{C}_{\Omega}}{2^{m}}.
  \]
In the outline above we consider several cases and in some of them the
  procedure terminates after a finite number of steps.
The only possible way of the procedure not to stop is that we find more
  and more points $t_{m,n}$, $n\in {\mathbb N}$, with the previously
  described properties.
However, we claim that this procedure terminates after a finite number of
  steps $n_m$ so that we can define
  $L_m=L_{m-1}\cup\{t_{m,1},..,t_{m,n_m}\}$ and $f_m(k)=h_{m,n_m}(k)$
  which satisfies by construction Items (2)-(5).
To see that the procedure terminates, assume otherwise.
Pick a positive integer $N$ such that 
  \[
  N\cdot 2^{-m} >1
  \]
  and consider the  $N$ different points $t_{m,1},..,t_{m,N} \in
  \mathcal{I}_{\varepsilon}\setminus L_{m-1}$. 
By construction we have the
  maps
\begin{align*}                                                            
  h_{m,1}&:{\mathbb N}\rightarrow f_{m-1}({\mathbb N}), \\
  h_{m,2}&:{\mathbb N}\rightarrow h_{m,1}({\mathbb N}),\\
  &\vdots \\
  h_{m,N}&:{\mathbb N}\rightarrow h_{m,N-1}({\mathbb N}),
  \end{align*}
  with the properties that for $t_{m,n}$, $n\in\{1,...,N\}$, there exists
  a sequence $\tau^n_k\rightarrow t_{m,n}$  such that
\begin{eqnarray}\label{EQN20}
  \lim_{k\rightarrow\infty} \int_{{(s\circ
  u_{h_{m,j}(k)})}^{-1}(\mathcal{I}(\tau_k,h_{m,j}(k)))}
  u^{\ast}_{h_{m,j}(k)}\hat{\omega}> \frac{\bar{C}_{\Omega}}{2^{m}}.
  \end{eqnarray}
Using (\ref{EQN20}) we obtain with $H(k):=h_{m,N}(k)$ and the sequences
  $\tau_{k}^n\rightarrow t_{n}$ for $n=1,...,N$
\begin{eqnarray*}
  \bar{C}_{\Omega}&\geq &\limsup_{k\rightarrow\infty} \int_{S_k}
  u_{H(k)}^{\ast}\hat{\omega}\\
  &\geq & \sum_{n=1}^{N} \lim_{k\rightarrow\infty} \int_{(s\circ
  u_{H(k)})^{-1}(\mathcal{I}(\tau_k^n,H(k)))}
  u^{\ast}_{H(k)}\hat{\omega}\\
  &\geq & N\cdot 2^{-m}\cdot \bar{C}_{\Omega}\\
  &>& \bar{C}_{\Omega}.
  \end{eqnarray*}
Observe that we use that the sets $\mathcal{I}(\tau_k^n,H(k))$ are
  mutually disjoint for large $k$.
  Indeed,  since $\tau^n_k\rightarrow t_{m,n}$ as $k\rightarrow\infty$ and
  the points $t_{m,n}$ are mutually disjoint, this follows from the fact
  that the diameter of the intervals $\mathcal{I}(\tau^n_k,H(k))$ is
  shrinking to $0$.
Thus we have shown that for each fixed \(m\), the procedure which
  generates the set \(\{t_{m,1}, t_{m,2}, \ldots\}\) terminates after a
  finite number of iterations, and hence this set is finite.
This completes the proof of Proposition \ref{PROP_inductive_construction}.
\end{proof}

With these subsequences established, we now pass to a diagonal
  subsequence by defining
  \begin{align}\label{EQ_subsequence}                                     
    k_m := f_m(m) \quad\text{for }m\in \mathbb{N}.
    \end{align}
We also define the countable subset
  \begin{align}\label{EQ_L_definition}                                    
    L:= \bigcup_{m\in \mathbb{N}} L_m.
    \end{align}
  The following result is then an immediate consequence of the above
  construction.

\begin{lemma}[vanishing horizontal area]
  \label{LEM_vanishing_horizontal_area}
  \hfill\\
Let the \((\mathcal{I}_\epsilon \times M, \Omega, \hat{\lambda})\),
  \(\{J_k\}_{k\in \mathbb{N}}\), and \(u_k:(S_k, j_k) \to
  (\mathcal{I}_\epsilon \times M, J_k)\) be as above, and let
  \(\{k_m\}_{m\in\mathbb{N}}\) be the subsequence given in equation
  (\ref{EQ_subsequence}).
Then for each \(t_0 \in \mathcal{I}_\epsilon \setminus L\), we have
  \begin{align*}                                                        
    \lim_{m\to \infty} \int_{(s\circ
    u_{k_m})^{-1}\big(\mathcal{I}(t_0, k_m)\big)}
    u^*_{k_m}{\hat{\omega}} =0.
    \end{align*}
\end{lemma}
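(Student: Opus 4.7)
The plan is to deduce the lemma directly from properties (3) and (5) of Proposition \ref{PROP_inductive_construction}, using the diagonal nature of the subsequence \(k_m = f_m(m)\). The key observation is that since the integrand \(u^*_{k_m}\hat{\omega}\) is pointwise non-negative (because \(\hat{\omega}\) evaluates non-negatively on \(J_{k_m}\)-complex lines), showing the limit vanishes reduces to showing the \(\limsup\) can be made arbitrarily small.

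Fix \(\eta > 0\) and choose \(M \in \mathbb{N}\) large enough that \(\bar{C}_\Omega / 2^M < \eta\). Since \(t_0 \in \mathcal{I}_\epsilon \setminus L\), in particular \(t_0 \notin L_M\), so by property (5) of Proposition \ref{PROP_inductive_construction}, applied to the constant sequence \(\tau_k \equiv t_0\) (which trivially converges to \(t_0\)), we obtain
\begin{equation*}
  \limsup_{k\to\infty} \int_{(s\circ u_{f_M(k)})^{-1}\big(\mathcal{I}(t_0, f_M(k))\big)} u^*_{f_M(k)}\hat{\omega}\;\leq\; \frac{\bar{C}_\Omega}{2^M} \;<\; \eta.
\end{equation*}

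Next, I would observe that the diagonal subsequence eventually lives inside \(f_M(\mathbb{N})\). Indeed, iterating property (3), for every \(m \geq M\) we have \(f_m(\mathbb{N}) \subset f_M(\mathbb{N})\), so \(k_m = f_m(m) \in f_M(\mathbb{N})\). Because \(f_M\) is strictly increasing from \(\mathbb{N}\) to \(\mathbb{N}\), we may write \(k_m = f_M(j_m)\) for a unique \(j_m \in \mathbb{N}\); moreover \(k_m = f_m(m) \geq m \to \infty\), so \(j_m \to \infty\) as well. Consequently the sequence
\begin{equation*}
  b_m := \int_{(s\circ u_{k_m})^{-1}\big(\mathcal{I}(t_0, k_m)\big)} u^*_{k_m}\hat{\omega}, \qquad m \geq M,
\end{equation*}
is a (subscript-shuffled) subsequence of the sequence whose \(\limsup\) was bounded above, and hence \(\limsup_{m\to\infty} b_m \leq \bar{C}_\Omega/2^M < \eta\).

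Since \(\eta > 0\) was arbitrary and the integrals \(b_m\) are non-negative, it follows that \(\lim_{m\to\infty} b_m = 0\), which is the desired conclusion. No real obstacle is anticipated; the entire argument is a bookkeeping exercise extracting the promised ``small horizontal area'' at points outside \(L\) from the already-established inductive construction, combined with the standard fact that a diagonal subsequence eventually lies in every finite tail of any previously-chosen subsequence.
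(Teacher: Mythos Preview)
Your argument is correct and follows essentially the same route as the paper: apply property (5) of Proposition \ref{PROP_inductive_construction} with the constant sequence $\tau_k\equiv t_0$ to bound the $\limsup$ along $f_M(\mathbb{N})$ by $\bar C_\Omega/2^M$, then use that the diagonal sequence $k_m=f_m(m)$ eventually lies in $f_M(\mathbb{N})$ to transfer this bound. Your write-up is in fact slightly more explicit than the paper's about why the diagonal subsequence inherits the bound.
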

\begin{proof}
Since $t_0\in\mathcal{I}_{\varepsilon}\setminus L$ it holds that
  $t_0\not\in L_m$ for every $m\in {\mathbb N}_0$.
Hence there does not exist a sequence $\tau_k\rightarrow t_0$ with
  \[
  \limsup_{k\rightarrow \infty}  \int_{{(s\circ
  u_{f_m(k)})}^{-1}(\mathcal{I}(\tau_k,f_m(k)))}
  u^{\ast}_{f_m(k)}\hat{\omega}>  \frac{\bar{C}_{\Omega}}{2^m}
  \]
  and specifically we must have 
\begin{eqnarray}\label{EQN30}
  \limsup_{k\rightarrow \infty}  \int_{{(s\circ
  u_{f_m(k)})}^{-1}(\mathcal{I}(t_0,f_m(k)))}
  u^{\ast}_{f_m(k)}\hat{\omega}\leq  \frac{\bar{C}_{\Omega}}{2^m}.
  \end{eqnarray}
Since $k_m:=f_m(m)$ is the diagonal sequence we deduce from (\ref{EQN30})
  that for every $m\in {\mathbb N}_0$
\begin{eqnarray}\label{EQN40}
  \limsup_{i\rightarrow \infty}  \int_{{(s\circ
  u_{k_i})}^{-1}(\mathcal{I}(t_0,k_i))} u^{\ast}_{k_i}\hat{\omega}\leq
  \frac{\bar{C}_{\Omega}}{2^m}.
  \end{eqnarray}
This implies the assertion
\[
  \lim_{m\to \infty} \int_{(s\circ
    u_{k_m})^{-1}\big(\mathcal{I}(t_0, k_m)\big)}
    u^*_{k_m}{\hat{\omega}} =0.
    \]
\end{proof}

%
We will next make use of this subsequence, and with it we will be
  interested in the \(\hat{\lambda}\) integrals of our curves along various
  levels \(\{t\}\times M\).
This is made precise in equation (\ref{EQ_F_definition}) below, however
  for the moment we note that we will be interested in various properties of
  these functions, (e.g. that they are measurable and integrable).
Indeed, studying properties and limits of such functions will ultimately
  yield the desired result regarding existence of periodic orbits on almost
  every energy level.
We now proceed with this argument.

For each \(m\in \mathbb{N}\) we define \(\mathcal{Y}_m\) to be the
  set of critical values of the functions \(s\circ u_{k_m}\colon S_{k_m}
  \to \mathcal{I}_\epsilon \subset \mathbb{R}\).
By Sard's theorem, each of these sets has measure zero; that is,
  \(\mu(\mathcal{Y}_m)=0\) for each \(m\in \mathbb{N}\).
By countable sub-additivity, we then also have
  \begin{align}\label{EQ_Y_definition}                                    
    \mu(\mathcal{Y}) = 0\qquad\text{where} \qquad \mathcal{Y}:=
    \bigcup_{m\in \mathbb{N}} \mathcal{Y}_m.
    \end{align}
For each \(m\in \mathbb{N}\) we then define the functions
  \begin{align}\label{EQ_F_definition}                                    
    &F_m\colon \mathcal{I}_\epsilon \to [0, \infty) \subset \mathbb{R}
    \\
    &F_m(t)=
    \begin{cases}
    \int_{(s\circ u_{k_m})^{-1}(t)} u_{k_m}^*\hat{\lambda} &\text{if }t\in
    \mathcal{I}_\epsilon\setminus \mathcal{Y}\notag
    \\
    0 &\text{otherwise}.
    \end{cases}
    \end{align}
We now claim the following.

\begin{lemma}[$F_m$ is measurable]
  \label{LEM_Fm_are_measurable}
  \hfill\\
For each \(m\in \mathbb{N}\), the function \(F_m\) defined above is a
  measurable function.
Moreover, for each \(m\in \mathbb{N}\), the function \(F_m\) agrees with
  the function
  \begin{align}\label{EQ_auxiliary_F}                                     
    t\mapsto \int_{(s\circ u_{k_m})^{-1}(t)\setminus \mathcal{X}_m}
    u_{k_m}^*\hat{\lambda}
    \end{align}
  almost everywhere; here \(\mathcal{X}_m= \{\zeta\in S: d(s\circ
  u_{k_m})_\zeta =0\}\) as in Lemma \ref{LEM_coarea_application}.
\end{lemma}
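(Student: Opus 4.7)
The plan is to first verify the a.e.\ agreement, and then deduce measurability from Proposition \ref{PROP_co-area} applied on the critical-point-free locus $\widetilde{S}_{k_m} := S_{k_m}\setminus \mathcal{X}_m$.

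For the a.e.\ agreement, observe that by definition the image $(s\circ u_{k_m})(\mathcal{X}_m)$ of the critical set is contained in $\mathcal{Y}_m$, which in turn lies in $\mathcal{Y}$. So for any $t \in \mathcal{I}_\epsilon \setminus \mathcal{Y}$, the preimage $(s\circ u_{k_m})^{-1}(t)$ is disjoint from $\mathcal{X}_m$; hence the integration domains of $F_m(t)$ and of (\ref{EQ_auxiliary_F}) coincide, and so do the integrals. Since $\mu(\mathcal{Y})=0$ by (\ref{EQ_Y_definition}), the two functions agree almost everywhere.

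For measurability, it suffices to show the function in (\ref{EQ_auxiliary_F}) is measurable, since $F_m$ differs from it on a null set and is defined to vanish on $\mathcal{Y}$. I would apply Proposition \ref{PROP_co-area} on $\widetilde{S}_{k_m}$ equipped with the pulled-back metric $\gamma_m := u_{k_m}^* g_{J_{k_m}}$ (which is Riemannian on $\widetilde{S}_{k_m}$ precisely because removing $\mathcal{X}_m$ makes $u_{k_m}$ an immersion there, by the argument used in the proof of Lemma \ref{LEM_coarea_application}), using the critical-point-free function $\beta_m := (s\circ u_{k_m})\big|_{\widetilde{S}_{k_m}}$ and the nonnegative measurable test function
\begin{equation*}
f_m := \frac{\|\nabla\beta_m\|_{\gamma_m}}{\phi_{k_m}\circ u_{k_m}}.
\end{equation*}
The identities derived in the proof of Lemma \ref{LEM_coarea_application} (in particular equation (\ref{EQ_tau_nu_2}) and the surrounding computation) show that on each level set $\beta_m^{-1}(t)$, the restriction of the one-form $u_{k_m}^*\hat{\lambda}$ coincides with $f_m\, d\mu^1_{\gamma_m}$, so the inner integrand in (\ref{EQ_coarea}) is precisely the function in (\ref{EQ_auxiliary_F}). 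The measurability of this inner integrand in $t$ is part of the standard content of the co-area formula.

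The main technical concern I anticipate is that Proposition \ref{PROP_co-area} as stated asserts only an equality and does not name the measurability of the inner integrand as an explicit conclusion, although this measurability is built into the standard Fubini-type proof of the co-area formula. Finiteness of the relevant integrals is guaranteed by the a priori bound (\ref{EQN136}), which gives $\int_{S_{k_m}} u_{k_m}^*(ds\wedge \hat{\lambda}) \leq \bar{C}_\Omega$. If a more direct justification of measurability were preferred, one could alternatively exhaust $\widetilde{S}_{k_m}$ by an increasing nested sequence of precompact open subsets, apply the co-area formula on each (where the measurability follows from elementary Fubini considerations on a chart-by-chart basis using the submersion theorem for $\beta_m$), and pass to the monotone limit to recover measurability of the full integrand on $\mathcal{I}_\epsilon$.
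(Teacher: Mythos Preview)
Your argument for the almost-everywhere agreement is essentially identical to the paper's. For measurability, however, you take a genuinely different route. The paper argues directly: it defines an auxiliary function $\widetilde{F}_m$ (using $\mathcal{Y}_m$ in place of $\mathcal{Y}$), observes that on the open set $\mathcal{I}_\epsilon\setminus\mathcal{Y}_m$ the slice integral $t\mapsto\int_{(s\circ u_{k_m})^{-1}(t)}u_{k_m}^*\hat{\lambda}$ is continuous (indeed differentiable, since the level sets vary smoothly away from critical values), uses properness of $s\circ u_{k_m}$ to see that $\mathcal{Y}_m$ is closed, and then decomposes each sublevel set $\widetilde{F}_m^{-1}([0,r))$ as an open set union $\mathcal{Y}_m$. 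This is entirely elementary and self-contained.

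Your approach instead extracts measurability from the co-area formula, identifying the function in (\ref{EQ_auxiliary_F}) as the inner integrand on the right-hand side of (\ref{EQ_coarea}). This is correct in spirit, and you rightly flag the gap: Proposition~\ref{PROP_co-area} as stated asserts only an equality of integrals, not measurability of the fiber integral. Your proposed fix via exhaustion and monotone convergence is sound. The trade-off is that your route recycles machinery already invoked in Lemma~\ref{LEM_coarea_application} and avoids re-examining the regularity of slice integrals, while the paper's route is more transparent and does not depend on unstated content hidden inside the co-area formula.
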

%
\begin{proof}
  First fix \(m\in \mathbb{N}\) and define 
  \begin{align*}                                                          
    &\widetilde{F}_m\colon \mathcal{I}_\epsilon\to [0,\infty) \subset
    \mathbb{R}
    \\
    &\widetilde{F}_m(t)=
    \begin{cases}
      \int_{(s\circ u_{k_m})^{-1}(t)} u_{k_m}^*\hat{\lambda} &\text{if }t\in
      \mathcal{I}\setminus \mathcal{Y}_m
      \\
      0 &\text{otherwise}.
      \end{cases}
    \end{align*}
Observe that \(F_m\) and \(\widetilde{F}_m\) agree almost everywhere and
  consequently if  \(\widetilde{F}_m\) is measurable so is $F_m$.
Because \(s\circ u_{k_m}(\mathcal{X}_m)=\mathcal{Y}_m\) which has measure
  zero, it follows that \(\widetilde{F}_m\) is almost everywhere equals the
  function defined in equation (\ref{EQ_auxiliary_F}).
This establishes the second part of the lemma.

To establish the first part of the lemma it is sufficient to show that
  \(\widetilde{F}_m\) is measurable.
That is, it is sufficient to show that for each \(r\in [0, \infty) \), the
  set \(\widetilde{F}_m^{-1}\big([0, r)\big)\) is measurable.
To that end, note that by assumption in Theorem
  \ref{THM_localized_almost_existence} we have \(s \circ u_{k_m}(S_{k_m}) =
  \mathcal{I}_\epsilon\) and consequently for each \(s_0\in
  \mathcal{I}_\epsilon \setminus \mathcal{Y}_m\) we have
  \begin{align*}                                                          
    \widetilde{F}_m(s_0) = \int_{(s\circ u_{k_m})^{-1}(s_0)}
    u_{k_m}^*\hat{\lambda}  >0.
    \end{align*}
It also follows that \(\widetilde{F}_m^{-1}(0) = \mathcal{Y}_m\).
Note that \(\mathcal{Y}_m\) is closed in \(\mathcal{I}_\epsilon\).
Also note that \(\widetilde{F}_m\big|_{\mathcal{I}_\epsilon\setminus
  \mathcal{Y}_m}\) is differentiable, and hence continuous, and thus 
  \begin{align*}                                                          
    A_r:=\big(\widetilde{F}_m\big|_{\mathcal{I}_\epsilon \setminus
    \mathcal{Y}_m}\big)^{-1} \big([0 , r)\big)\qquad \text{is open in
    }\mathcal{I}_\epsilon\setminus \mathcal{Y}_m.
    \end{align*}
That is, there exists an open set \(O\subset \mathcal{I}_\epsilon\) such that
  \(A_r = O\cap (\mathcal{I}_\epsilon \setminus \mathcal{Y}_m)\).
However \(\mathcal{I}_\epsilon\setminus \mathcal{Y}_m\) is open in
  \(\mathcal{I}_\epsilon\) and hence \(A_r\) is open in
  \(\mathcal{I}_\epsilon\).
Consequently \(A_r\) is measurable.
However, we then have
  \begin{align*}                                                          
    \widetilde{F}_m^{-1}\big([0 , r)\big)
    &=
    \big(\widetilde{F}_m\big|_{\mathcal{I}_\epsilon \setminus
    \mathcal{Y}_m}\big)^{-1} \big([0, r )\big)
    \; \; \bigcup  \; \; 
    \big(\widetilde{F}_m\big|_{ \mathcal{Y}_m}\big)^{-1}
    \big([0 , r)\big)
    \\
    &=A_r \; \; \cup  \; \; \mathcal{Y}_m,
    \end{align*}
  with \(A_r\) open (and hence measurable) and \(\mathcal{Y}_m\) having
  measure zero (and hence is measurable).
We conclude that \(\widetilde{F}_m^{-1}\big([0 , r)\big)
  \) is measurable, which completes the proof of Lemma
  \ref{LEM_Fm_are_measurable}.
\end{proof}
For the following discussion we introduce the map \(F\) which is defined
  as follows:
\begin{align}\label{MAP_F}                                               
  &F\colon\mathcal{I}_{\varepsilon}\rightarrow {\mathbb
  R}^+\cup\{\infty\}\notag
  \\
  &F(s)=\liminf_{m\rightarrow \infty} F_m(s).
  \end{align}

Then by standard measure theory results \(F\) is an extended measurable
  function, see \cite{Bartle}.
With this definition in place the next guiding observation (made rigorous
  below) is that two important results hold.
The first one is given in the next proposition.

\begin{proposition}[$F$ is almost everywhere finite]
  \label{PROP_F_is_almost_everywhere_finite}
  \hfill\\
  With $F$ as just defined in (\ref{MAP_F}) it holds that 
 \[
 \textrm{measure}(\{s\in \mathcal{I}_\epsilon : F(s) = \infty\}) =0
 \] 
\end{proposition}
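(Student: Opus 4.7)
The plan is to show $\int_{\mathcal{I}_\epsilon} F \, dt < \infty$, from which the almost-everywhere finiteness of $F$ follows immediately by the standard fact that an integrable (extended) non-negative function can only take the value $+\infty$ on a set of measure zero.

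First, I would observe that each $F_m$ is a non-negative measurable function by Lemma \ref{LEM_Fm_are_measurable}, and thus so is $F = \liminf_m F_m$. Applying Fatou's lemma gives
\begin{equation*}
\int_{\mathcal{I}_\epsilon} F(t) \, dt \; = \; \int_{\mathcal{I}_\epsilon} \liminf_{m\to \infty} F_m(t) \, dt \; \leq \; \liminf_{m\to\infty} \int_{\mathcal{I}_\epsilon} F_m(t)\, dt.
\end{equation*}

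Next, I would compute each of the integrals on the right-hand side. By Lemma \ref{LEM_Fm_are_measurable}, $F_m$ agrees almost everywhere with the function $t \mapsto \int_{(s\circ u_{k_m})^{-1}(t) \setminus \mathcal{X}_m} u_{k_m}^* \hat\lambda$, so changing the integrand on a null set does not change the integral. Lemma \ref{LEM_coarea_application} (the co-area application) applied to each curve $u_{k_m}$ then yields
\begin{equation*}
\int_{\mathcal{I}_\epsilon} F_m(t) \, dt \; = \; \int_{\mathcal{I}_\epsilon} \Big( \int_{(s\circ u_{k_m})^{-1}(t)\setminus \mathcal{X}_m} u_{k_m}^* \hat\lambda \Big)\, dt \; = \; \int_{S_{k_m}} u_{k_m}^*(ds\wedge \hat\lambda).
\end{equation*}
Combining this with the uniform bound (\ref{EQN136}), namely $\int_{S_{k_m}} u_{k_m}^*(ds\wedge \hat\lambda) \leq \bar{C}_\Omega$ for every $m$, we conclude
\begin{equation*}
\int_{\mathcal{I}_\epsilon} F(t)\, dt \; \leq \; \bar{C}_\Omega \; < \; \infty.
\end{equation*}

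Finally, since $F$ is a non-negative extended measurable function with finite integral, the set $\{s\in \mathcal{I}_\epsilon : F(s)=\infty\}$ must have measure zero, which is the desired conclusion. There is no serious obstacle here: the entire argument is a packaging of Fatou's lemma plus the co-area identity already proved as Lemma \ref{LEM_coarea_application}, together with the a priori symplectic area bound built into the hypotheses of Theorem \ref{THM_localized_almost_existence}. The mild point requiring care is ensuring that Fatou's lemma is applied to genuinely measurable non-negative functions, which is precisely what Lemma \ref{LEM_Fm_are_measurable} was set up to guarantee.
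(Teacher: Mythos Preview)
Your proof is correct and follows essentially the same approach as the paper's own proof: apply the co-area identity (Lemma~\ref{LEM_coarea_application}) to identify $\int_{\mathcal{I}_\epsilon} F_m$ with $\int_{S_{k_m}} u_{k_m}^*(ds\wedge\hat\lambda)$, bound this uniformly by $\bar C_\Omega$ via (\ref{EQN136}), and then invoke Fatou's lemma. Your write-up is in fact slightly more careful than the paper's, since you explicitly justify the measurability hypothesis needed for Fatou via Lemma~\ref{LEM_Fm_are_measurable}.
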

%
\begin{proof}
In view of (\ref{EQ_coarea_lambda}) we have the formula
 \[
 \int_{S_{k_m}} u_{k_m}^*(ds\wedge\hat{\lambda}) =
 \int_{\mathcal{I}_{\varepsilon}}\left(\int_{(s\circ
 u_{k_k})^{-1}(t)\setminus\mathcal{X}} u_{k_m}^*\hat{\lambda}\right)ds.
 \]
 Using (\ref{EQN133}) and (\ref{EQN136}) we infer that 
 \[
  2\cdot C_{\Omega}= \bar{C}_{\Omega} \geq    \int_{S_{k_m}}
  u_{k_m}^*(ds\wedge\hat{\lambda}) = \int_{\mathcal{I}_{\varepsilon}}
  F_m(s)ds.
\]
 In view of Fatou's Lemma, recalling that \(F=\liminf F_m\), we obtain
 \[
\bar{C}_{\Omega}\geq \liminf_{m\rightarrow\infty}
\int_{\mathcal{I}_{\varepsilon}} F_m(s)ds\geq
\int_{\mathfrak{I}_{\varepsilon}} F(s)ds.
 \]
This shows that  \(\{s\in \mathcal{I}_\epsilon : F(s) = \infty\}\) has
 vanishing measure.
\end{proof}

The second result, which is more substantial, studies the points $s$
  satisfying  \(F(s)<\infty\).
We will establish that there is a periodic orbit on \(\{s\}\times M\)
  provided $F(s)<\infty$ and $s\in
  \mathcal{I}_{\varepsilon}\setminus(L\cup\mathcal{Y})$.
The proof of Theorem \ref{THM_localized_almost_existence} then follows
  immediately since we have just established that  \(\{s\in
  \mathcal{I}_\epsilon : F(s) < \infty\}\) has (full measure)
  \(2\varepsilon\) and $L\cup\mathcal{Y}$ has measure zero.

\begin{proposition}[bounded $F$ yields periodic orbit]
  \label{PROP_bounded_F_yields_periodic_orbit}
  \hfill\\
Let \((\mathcal{I}_\epsilon\times M, \Omega, \hat{\lambda} )\),
  \(\{J_k\}_{k\in \mathbb{N}}\), and \(u_k:(S_k, j_k) \to
  (\mathcal{I}_\epsilon\times M, J_k)\) be as in the hypotheses of Theorem
  \ref{THM_localized_almost_existence}.
Let \(L\) be defined as in equation (\ref{EQ_L_definition}),
  \(\mathcal{Y}\) be as defined in equation (\ref{EQ_Y_definition}), and
  let \(F_m\) be the functions defined in equation
  (\ref{EQ_F_definition}).
Also, let \(s_0 \in \mathcal{I}_\epsilon \setminus (L \cup \mathcal{Y})\),
  and suppose
  \begin{align*}                                                          
  F(s_0)=\liminf_{m\rightarrow\infty} F_m(s_0) = \lim_{N\to \infty}
  \inf_{m \geq N} F_m(s_0) < \infty.
    \end{align*}
\emph{Then} there exists a periodic orbit on the energy level
  \(\{s_0\}\times M\).
\end{proposition}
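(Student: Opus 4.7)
The plan is a localized neck-stretching argument at $s_0$: we rescale each curve using $\Psi_{k_m}^{s_0}$, extract a pseudoholomorphic limit in $(-1,1)\times M$ using feral-curve compactness from \cite{FH2}, and exploit the vanishing of $\hat\omega$-area in the stretched window to force the limit curve to lie in a cylinder over a periodic orbit of $\hat X$ on $\{s_0\}\times M$.

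First, pass to a subsequence (still denoted $\{k_m\}$) along which $F_m(s_0) \to F(s_0) < \infty$. Since $s_0 \notin \mathcal{Y}$, each level set $C_m := (s\circ u_{k_m})^{-1}(s_0)$ is a smooth one-submanifold of $S_{k_m}$ with $\int_{C_m} u_{k_m}^*\hat\lambda = F_m(s_0)$ uniformly bounded. Since $s_0 \notin L$, Lemma \ref{LEM_vanishing_horizontal_area} gives
\[
\lim_{m\to\infty} \int_{(s\circ u_{k_m})^{-1}(\mathcal{I}(s_0,k_m))} u_{k_m}^*\hat\omega = 0.
\]
Now define the rescaled pseudoholomorphic maps $\tilde u_m := \Psi_{k_m}^{s_0} \circ u_{k_m}$ on $V_m := (s\circ u_{k_m})^{-1}(\mathcal{I}(s_0,k_m))$, with values in $(-1,1)\times M$. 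By Lemma \ref{LEM_Psi_are_diffeos}, $\Psi_{k_m}^{s_0}$ carries $\{s_0\}\times M$ to $\{0\}\times M$, and each $\tilde u_m$ is pseudoholomorphic for the push-forward $\tilde J_m := (\Psi_{k_m}^{s_0})_* J_{k_m}$. Condition ($\mathcal{J}3$) ensures $\{\tilde J_m\}$ is uniformly $C^n$-bounded in an auxiliary translation-invariant metric on $\mathbb{R}\times M$. The hypotheses provide uniform genus and $(ds\wedge\hat\lambda+\hat\omega)$-area bounds.

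Apply the feral-curve compactness of \cite{FH2} to extract a further subsequential $\tilde J_\infty$-holomorphic limit $\tilde u_\infty : (S_\infty, j_\infty) \to (-1,1)\times M$, where $\tilde J_\infty$ is the adiabatic limit: it preserves the splitting $\hat\rho \oplus \xi$ and satisfies $\tilde J_\infty \partial_s = \hat X$ (the $\phi_k$-distortion has been undone by the rescaling). Because $\hat\omega$ is non-negative on $\tilde J_m$-complex lines and $\int \tilde u_m^*\hat\omega \to 0$, we get $\tilde u_\infty^*\hat\omega \equiv 0$. Non-negativity and $\tilde J_\infty$-pseudoholomorphicity then force ${\rm Image}(T\tilde u_\infty) \subset \hat\rho = {\rm Span}(\partial_s,X_H)$, so the image lies in an integral leaf of $\hat\rho$, namely in $(-1,1)\times \gamma$ for some integral curve $\gamma$ of $\hat X$ inside $\{0\}\times M$. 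Non-triviality of $\tilde u_\infty$ follows because $\tilde s \circ \tilde u_m$ surjects onto $(-1,1)$ by construction of $\mathcal{I}(s_0,k_m)$, so the limit is not constant; the uniform $\hat\lambda$-bound on $\tilde u_\infty$ coming from $F(s_0)<\infty$ then forces $\gamma$ to close up into a periodic orbit of $\hat X$. Identifying this with a periodic orbit of $X_H$ on $\{s_0\}\times M$ via the reparametrization $\hat X = X_H/\hat\lambda(X_H)$ completes the argument.

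The main obstacle is the compactness step: because we allow domains $V_m$ that are merely open subsets of the (possibly already wild) Riemann surfaces $S_{k_m}$, with complements that may be Cantor-like, the limit $\tilde u_\infty$ is a genuine feral curve rather than a standard finite-energy SFT curve. One must invoke the feral-compactness framework of \cite{FH2} and verify that, in this feral setting, the classical dichotomy ``vanishing $d\lambda$-energy implies trivial cylinder over a Reeb orbit'' persists under only the hypotheses of bounded genus, bounded symplectic area, and a translation-invariant geometrically bounded limiting almost complex structure. Once this is in hand, the remaining identifications -- that the integral leaf is a genuine closed orbit rather than a non-closing leaf of the foliation on a minimal invariant set -- follow from the uniform $\hat\lambda$-bound on the central slice.
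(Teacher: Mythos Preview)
Your overall architecture matches the paper's --- rescale by $\Psi_{k_m}^{s_0}$, extract a limit in $(-1,1)\times M$, use $s_0\notin L$ to kill the $\hat\omega$-area of the limit, and conclude the image lies in a cylinder over a Reeb trajectory --- but two steps are genuinely misidentified, and the role you assign to the bound $F(s_0)<\infty$ is not the one it actually plays.

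First, the area bound for the rescaled maps does not follow from the global estimate $\int_{S_k}u_k^*(ds\wedge\hat\lambda+\hat\omega)\le 2C_\Omega$. The rescaling pulls back $da$ to $\phi_{k_m}^{-1}\,ds$, so the $(da\wedge\tilde\lambda)$-area of $\tilde u_m$ equals $\int \phi_{k_m}^{-1}\,u_{k_m}^*(ds\wedge\hat\lambda)$, which has no reason to stay bounded as $\phi_{k_m}\to 0$. This is exactly where $F(s_0)<\infty$ enters in the paper: the slice bound $\int_{(a\circ\tilde u_m)^{-1}(0)}\tilde u_m^*\tilde\lambda\to C_{\hat\lambda}<\infty$ is fed into Theorem~8 of \cite{FH2} (area bounds in a realized Hamiltonian homotopy) to produce a genuine uniform area bound on $\tilde S_m$. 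Without this step no compactness theorem applies.

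Second, the compactness result invoked is target-local Gromov compactness from \cite{Fish2}, not feral compactness; it yields a \emph{compact} nodal limit $\tilde u_\infty:\widetilde\Sigma_\infty\to(-1,1)\times M$ with smooth boundary satisfying $\tilde u_\infty(\partial\widetilde\Sigma_\infty)\subset\{|a|>\tfrac14\}$. The paper then proves (via a separate case-analysis lemma) that some connected component $\widetilde\Sigma'$ is non-constant, meets $\{a=0\}$, and has $\partial\widetilde\Sigma'\neq\emptyset$; your one-line appeal to surjectivity of the approximants does not suffice for this. Periodicity of $\gamma$ is then forced by the maximum principle applied to the lift $v=\Phi^{-1}\circ\tilde u_\infty:\widetilde\Sigma'\to(-1,1)\times\mathbb R\subset\mathbb C$ in the non-periodic case. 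Your proposed mechanism --- that the $\hat\lambda$-slice bound ``forces $\gamma$ to close up'' --- does not work: a bounded slice length is perfectly compatible with the image sitting over a bounded arc of a non-periodic trajectory, so it rules nothing out on its own.
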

%
\begin{proof}
We begin by passing to a subsequence (still denoted with subscripts \(m\))
  so that
  \begin{align*}                                                          
    \lim_{N\to \infty} \inf_{m \geq N} F_m(s_0) = \lim_{m\to \infty}
    F_m(s_0)= \lim_{m\to\infty}\int_{(s\circ u_{k_m})^{-1}(s_0)}
    u_{k_m}^*\hat{\lambda} =: C_{\hat{\lambda}} < \infty.
    \end{align*}
Let \(\Psi_{k_m}^{s_0} \colon \mathcal{I}_\epsilon\times M \to
  \mathbb{R}\times M\) be the embedding provided in equation
  (\ref{EQ_Psi_k_t}), whose restriction  \(\widetilde{\Psi}_m\) to
  \(\mathcal{I}(s_0,k_m)\times M\) given by
    \begin{align*}                                                          
    &\widetilde{\Psi}_{m}\colon \mathcal{I}(s_0, k_m)\times M\to
    (-1,1)\times M
    \\
    &\widetilde{\Psi}_m = \Psi_{k_m}^{s_0}\big|_{\mathcal{I}(s_0,
    k_m)\times M}
    \end{align*}
  is a diffeomorphism.  
The key feature is that $\widetilde{\Psi}_m$ maps $s_0$ to $0$ 
  and stretches out the shrinking intervals to  length two.
Define 
  \begin{align*}                                                          
    \widetilde{S}_{m}:= (s\circ u_{k_m})^{-1}\big(\mathcal{I}(s_0,
    k_m)\big)\qquad\text{and}\qquad\tilde{j}_m:=
    j_{k_m}\big|_{\widetilde{S}_m},
    \end{align*}
  and
  \begin{align*}                                                          
    &\tilde{u}_m\colon \widetilde{S}_m\to (-1,1)\times M
    \\
    &\tilde{u}_m= \widetilde{\Psi}_m\circ u_{k_m}.
    \end{align*}
Also, with \(a\) the coordinate on \((-1,1)\), define the following
  structures on \((-1,1)\times M\)
  \begin{align*}                                                          
    \widetilde{J}_m:=& \;  (\widetilde{\Psi}_m)_*J_{k_m},
    \\
    \tilde{\lambda}\, :=&\; (\widetilde{\Psi}_m)_*\hat{\lambda} \ \
    \textrm{(Note that there is no $m$-dependence on the left-hand side!)}
    \\
    \tilde{\omega}_m := & \;(\widetilde{\Psi}_m)_*\hat{\omega}
    \\
    \tilde{g}_m := & \; (da \wedge \tilde{\lambda} +
    \tilde{\omega}_m)\circ ({\rm Id}\times \widetilde{J}_m)
    =da^2 + \tilde{\lambda}^2 + \tilde{\omega}_m\circ ({\rm Id}\times
    \widetilde{J}_m)
    \end{align*}
We now make several observations which follow immediately from our
  construction.
\begin{enumerate}                                                         
  \item 
  The maps \(\tilde{u}_m\colon (\widetilde{S}_m, \tilde{j}_m) \to
  \big((-1,1)\times M, \widetilde{J}_m\big)\) are proper pseudoholomorphic
  maps without boundary and which lack constant components.
  \item 
  \({\rm Genus}(\widetilde{S}_m) \leq C_g\)
  \item \((a\circ \tilde{u}_m)^{-1}(0)\neq \emptyset\)
  \item 
  \(\int_{(a\circ \tilde{u}_m)^{-1}(0)} \tilde{u}_m^* \tilde{\lambda}
  \to C_{\hat{\lambda}} <\infty\)
  \item 
  \((-1,1)\times M\), equipped with any of the \((\tilde{\lambda},
  \tilde{\omega}_m)\), is a realized Hamiltonian homotopy in the sense of
  Definition 2.9 of \cite{FH2}, with adapted almost
  Hermitian structures \((\widetilde{J}_m, \tilde{g}_m)\), recalled in
  Definition \ref{DEF_hamiltonian_homotopy} in the Appendix.
  \item 
  \(\tilde{\omega}_m\to \tilde{\omega}^{s_0}\) in
  \(C^\infty\)  as \(m\to \infty\), where
\begin{align*}                                                            
  \tilde{\omega}^{s_0} = {\rm pr}_{s_0}^* \hat{\omega}
  \qquad\text{and}\qquad{\rm pr}_{s_0} (s, p) = (s_0, p).
  \end{align*}
  \item 
  \(\sup_{m\in \mathbb{N}} \|  \widetilde{J}_m\|_{C^k} <
  \infty\) and   \(\sup_{m\in \mathbb{N}} \|
  \tilde{g}_m\|_{C^k} < \infty\) for each \(k\in \mathbb{N}\).
  \end{enumerate}
Note that as a consequence of the above, together with Theorem 8 in
  \cite{FH2} (area bounds in a realized Hamiltonian
  homotopy), it follows that there exists a number \(C_A>0\) such that for
  all sufficiently large \(m\in \mathbb{N}\) we have
  \begin{align*}                                                          
    {\rm Area}_{\tilde{u}_m^*\tilde{g}_m}(\widetilde{S}_m) =
    \int_{\widetilde{S}_m} \tilde{u}_m^*(da\wedge \tilde{\lambda} +
    \tilde{\omega}_m) \leq C_A.
    \end{align*}
We then pass to a subsequence (still denoted with subscripts \(m\)) so
  that \((\widetilde{J}_m, \tilde{g}_m)\) converges in
  \(C^\infty\).
We then apply the main result from \cite{Fish2} (namely Target-local
  Gromov compactness) to pass to a further subsequence (still denoted with
  subscripts \(m\)) and find compact Riemann surfaces
  \((\widetilde{\Sigma}_m, \tilde{j}_m)\subset (\widetilde{S}_m,
  \tilde{j}_m)\) with smooth boundary for which
  \begin{align}\label{EQ_trimming}                                        
    \tilde{u}_m(\widetilde{S}_m\setminus \widetilde{\Sigma}_m) \subset
    \big((-1, {\textstyle -\frac{1}{2}}) \cup ({\textstyle \frac{1}{2}},
    1)\big) \times M \qquad\text{for all }m\in \mathbb{N},
    \end{align}
  and for which the sequence \(\tilde{u}_m\colon \widetilde{\Sigma}_m \to
  (-1, 1)\times M\) converges in a Gromov sense to a (nodal) limit
  pseudoholomorphic map \(\tilde{u}_\infty\colon
  (\widetilde{\Sigma}_\infty, \tilde{j}_\infty )\to \big((-1, 1)\times M,
  \widetilde{J}_\infty\big)\); here \((\widetilde{\Sigma}_\infty,
  \tilde{j}_\infty)\) is compact and may have smooth boundary.
Note that because \((a\circ \tilde{u}_m)^{-1}(0)\neq \emptyset\) for all
  \(m\in \mathbb{N}\), it follows from equation (\ref{EQ_trimming}) that 
  \(\widetilde{\Sigma}_\infty\neq \emptyset\), and if \(\partial
  \widetilde{\Sigma}_\infty\neq \emptyset\), then
  \(\tilde{u}_\infty(\partial \widetilde{\Sigma}_\infty) \subset ((-1,
  -\frac{1}{4})\cup (\frac{1}{4}, 1))\times M\).
Because \(s_0\in \mathcal{I}_\epsilon\setminus (L\cup \mathcal{X})\)
  (specifically because \(s_0\notin L\)), we must have
  \begin{align*}                                                          
    \int_{\widetilde{\Sigma}_m}\tilde{u}_m^* \tilde{\omega}_m \to 0
    \qquad\text{and}\qquad
    \int_{\widetilde{\Sigma}_m}\tilde{u}_m^* \tilde{\omega}_m
    \to \int_{\widetilde{\Sigma}_\infty}
    \tilde{u}_\infty^*\tilde{\omega}^{s_0}
    \end{align*}
  so 
  \begin{align*}                                                          
    \int_{\widetilde{\Sigma}_\infty}
    \tilde{u}_\infty^*\tilde{\omega}^{s_0} =0 .
    \end{align*}
We now make the following claim.

\begin{lemma}[non-trivial limit component with boundary]
  \label{LEM_non_trivial_limit_component_with_boundary}
  \hfill\\
There exists a connected component \(\widetilde{\Sigma}'\) of
  \(\widetilde{\Sigma}_\infty\) for which \(\widetilde{\Sigma}' \cap
  (a\circ \tilde{u}_\infty)^{-1}(0)\neq \emptyset\) and \(\partial
  \widetilde{\Sigma}'\neq \emptyset\).
\end{lemma}
%
\begin{proof}
Suppose not.
For notational clarity, we let \(\mathcal{S}_0\) denote the set of
  connected components of \(\widetilde{\Sigma}_\infty\) which have non-empty
  intersection with \((a\circ \tilde{u}_\infty)^{-1}(0)\).
Then there are three possibilities.\\

\noindent \underline{\emph{Case I:}} \(\mathcal{S}_0=\emptyset\).\\
In this case, \(\widetilde{\Sigma}_\infty\) can be written as the disjoint
  union \(\widetilde{\Sigma}_\infty = \widetilde{\Sigma}_\infty^+ \cup
  \widetilde{\Sigma}_\infty^-\) where
  \begin{align*}                                                          
    a\circ \tilde{u}_\infty( \widetilde{\Sigma}_\infty^+) \subset (0, 1)
    \qquad\text{and}\qquad
    a\circ \tilde{u}_\infty( \widetilde{\Sigma}_\infty^-) \subset (-1, 0).
    \end{align*}
If either of \(\widetilde{\Sigma}_\infty^\pm\) are empty, then by Gromov
  convergence there must be some large \(m\in \mathbb{N}\)
  and some number \(a_0\in (-\frac{1}{2},\frac{1}{2})\) for which
  \((a\circ \tilde{u}_m)^{-1}(a_0)=\emptyset\), and hence there exists a
  \(s_0\in \mathcal{I}_\epsilon\) (specifically \(s_0 =
  \psi_{k_m}^{-1}(a_0)\)) for which \((s\circ u_{k_m})^{-1}(s_0)=\emptyset\).
However, this violates the assumption in Theorem
  \ref{THM_localized_almost_existence} which states that \(s\circ
  u_k(S_k)=\mathcal{I}_\epsilon\) for all \(k\).
Consequently \(\widetilde{\Sigma}_\infty^+\neq \emptyset\) and
  \(\widetilde{\Sigma}_\infty^-\neq \emptyset\).

Next note that because \(\widetilde{\Sigma}_\infty\) is compact, it
  follows that each of \(\widetilde{\Sigma}_\infty^\pm\) are compact and
  non-empty.
However, we then have
  \begin{align*}                                                          
    \sup_{z\in \widetilde{\Sigma}_\infty^-} a\circ \tilde{u}_\infty(z)
    = a_- < 0 < a_+ =
    \inf_{z\in \widetilde{\Sigma}_\infty^+} a\circ \tilde{u}_\infty(z).
    \end{align*}
But then again by Gromov convergence, this will violate the assumption
  that \(s\circ u_k(S_k)=\mathcal{I}_\epsilon\) for all \(k\).
Thus Case I is impossible.\\

\noindent \underline{\emph{Case II:}}  For each \(\widetilde{\Sigma}' \in
  \mathcal{S}_0\), the restriction
  \(\tilde{u}_\infty\big|_{\widetilde{\Sigma}'}\) is the constant map.\\
In this case, we can write \(\widetilde{\Sigma}_\infty\) as the disjoint
  union \(\widetilde{\Sigma}_\infty=\widetilde{\Sigma}_\infty^+\cup
  \widetilde{\Sigma}_\infty^0 \cup \widetilde{\Sigma}_\infty^-\) where
  \begin{align*}                                                          
    a\circ \tilde{u}_\infty( \widetilde{\Sigma}_\infty^+) \subset (0, 1),
    \quad
    a\circ \tilde{u}_\infty( \widetilde{\Sigma}_\infty^0) = \{0\},
    \quad\text{and}\quad
    a\circ \tilde{u}_\infty( \widetilde{\Sigma}_\infty^-) \subset (-1, 0).
    \end{align*}
The argument then proceeds as in Case I, which shows that Case II is also
  impossible.\\

\noindent \underline{\emph{Case III:}}  There exists \(\widetilde{\Sigma}'
  \in \mathcal{S}_0\), such that the restriction
  \(\tilde{u}_\infty\big|_{\widetilde{\Sigma}'}\) is not constant.\\
Note that by the contradiction hypothesis, we must have \(\partial
  \widetilde{\Sigma}' = \emptyset\), and by Gromov convergence
  \(\widetilde{\Sigma}'\) is compact.
That is, \((\widetilde{\Sigma}', \tilde{j}_\infty)\) is a closed Riemann
  surface.
Next, we make use of the fact that
  \begin{align*}                                                          
    \int_{\widetilde{\Sigma}_\infty} \tilde{u}_\infty^*
    \tilde{\omega}^{s_0}  = 0
    \end{align*}
  together with the fact that \(\tilde{\omega}^{s_0}\) evaluates
  non-negatively on \(\widetilde{J}_\infty\)-complex lines to conclude
  that for each \(z\in \widetilde{\Sigma}'\) we must have
  \begin{align*}                                                          
    {\rm Image}(T_z \tilde{u}_\infty) \subset \ker
    (\tilde{\omega}^{s_0} )_{\tilde{u}_\infty(z)} = {\rm
    Span}\big(\partial_a , X(\tilde{u}_\infty(z))\big),
    \end{align*}
  where \(X\) is the Hamiltonian vector field \(X(a, p) = \widehat{X}(s_0,
  p)\).
Consequently, there exists a Hamiltonian trajectory \(\gamma\colon
  \mathbb{R}\to \{0\}\times M\), solving \(\gamma'(t) = X(\gamma(t))\) for
  all \(t\) for which
  \begin{align*}                                                          
    \tilde{u}_\infty(\widetilde{\Sigma}') \subset (-1,1)\times
    \gamma(\mathbb{R}).
    \end{align*}
If \(\gamma\) is not periodic, then the map 
 \begin{align*}                                                           
    &\Phi\colon(-1,1)\times \mathbb{R}\rightarrow {\mathbb R}\times M
    \\ 
    &\Phi(s,t) = \big(s, \gamma(t)\big)
    \end{align*}
  is an injective pseudoholomorphic immersion, and hence the map 
  \begin{align*}                                                          
    \Phi^{-1}\circ \tilde{u}_\infty \colon \widetilde{\Sigma}' \to
    (-1,1)\times \mathbb{R} \subset \mathbb{C}
    \end{align*}
  is a holomorphic map from a closed Riemann surface into \(\mathbb{C}\).
By the maximum principle, and the fact that \(\Phi\) is an immersion, it
  follows that \(\tilde{u}_\infty \colon \widetilde{\Sigma}'\to
  (-1,1)\times M\) is a constant map, but this contradicts the assumption
  defining Case III.

The case in which \(\gamma\) is a periodic orbit is treated similarly by
  holomorphically parameterizing \((-1,1)\times \gamma(S^1)\) by an
  annulus in \(\mathbb{C}\).
Again the maximum principle applies and we conclude that
  \(\tilde{u}_\infty \colon \widetilde{\Sigma}'\to (-1,1)\times M\) is a
  constant map which is impossible.
We conclude that Case III is impossible.\\

All cases are impossible, and hence this completes the proof by
  contradiction of Lemma
  \ref{LEM_non_trivial_limit_component_with_boundary}.
\end{proof}

With Lemma \ref{LEM_non_trivial_limit_component_with_boundary}
  established, we now observe that there exists a connected component
  \(\widetilde{\Sigma}'\subset \widetilde{\Sigma}_\infty\) for which
  \(\widetilde{\Sigma}' \cap (a\circ \tilde{u}_\infty)^{-1}(0)\neq
  \emptyset\) and \(\partial \widetilde{\Sigma}'\neq \emptyset\).
We make use of the fact that \(\tilde{\omega}^{s_0}\) evaluates
  non-negatively on \(\widetilde{J}_\infty\)-complex lines, together with
  the fact that
  \begin{align*}                                                          
    \int_{\widetilde{\Sigma}_\infty}\tilde{u}_\infty^*\tilde{\omega}^{s_0}
    =0,
    \end{align*} 
    to conclude that \(\tilde{u}_\infty(\widetilde{\Sigma}')
    \subset (-1,1)\times \gamma(\mathbb{R})\) for some Hamiltonian
    trajectory \(\gamma\colon\mathbb{R} \to M\).
That is, \(\dot{\gamma}= X(\gamma)\) where \(\hat{\lambda}(X)= 1\) and
  \(i_X\tilde{\omega}^{s_0} = 0\).
If \(\gamma\) is not periodic, then the map 
 \begin{align*}                                                           
    &\Phi\colon(-1,1)\times \mathbb{R}\to (-1, 1)\times M
    \\ 
    &\Phi(s,t) = \big(s, \gamma(t)\big)
    \end{align*}
  is an injective pseudoholomorphic immersion, and hence the map 
  \begin{align*}                                                          
    &v\colon\widetilde{\Sigma}' \to (-1,1)\times \mathbb{R} \subset
    \mathbb{C}
    \\
    &v=\Phi^{-1}\circ \tilde{u}_\infty 
    \end{align*}
  is a non-constant holomorphic map from a connected compact Riemann
  surface \(\widetilde{\Sigma}'\) with non-empty boundary into
  \(\mathbb{C}\).
Moreover, this holomorphic map satisfies the following two conditions:
  \begin{enumerate}                                                       
    \item \(v(\partial \widetilde{\Sigma}') \subset \big((-1,
      -\frac{1}{4})\cup (\frac{1}{4}, 1)\big) \times \mathbb{R}\).            
    \item \(v^{-1}(\{0\}\times \mathbb{R})\neq \emptyset\).
    \end{enumerate}
However, by the maximum principle, this is impossible.
We conclude that \(\gamma\) must be a periodic trajectory of the
  Hamiltonian vector field \(X\) which satisfies
  \begin{align*}                                                          
    \hat{\lambda}(X)=1,  \qquad\text{and}\qquad i_X\tilde{\omega}^{s_0}
    =0.
    \end{align*}
Or in other words, for the symplectic manifold
  \((\mathcal{I}_\epsilon\times M, \Omega)\), and Hamiltonian function
  \(H(s,p) = s\), there exists a periodic Hamiltonian orbit on energy level
  \(\{s_0\}\times M\).
This completes the proof of Proposition
  \ref{PROP_bounded_F_yields_periodic_orbit}.
\end{proof}

Let us summarize the already established facts involving the map \(F\colon
  \mathcal{I}_{\varepsilon}\rightarrow [0,+\infty]\). 
Recall that \(F\) has been given as  \(F(s):=\liminf_{m\rightarrow\infty}
  F_m\), where the \(F_m\) have been previously defined in
  (\ref{EQ_auxiliary_F}) by
\begin{align}\label{EQ_FFF_definition}                                    
    &F_m\colon \mathcal{I}_\epsilon \to [0, \infty) \subset \mathbb{R}
    \\
    &F_m(t)=
    \begin{cases}
    \int_{(s\circ u_{k_m})^{-1}(t)} u_{k_m}^*\hat{\lambda} &\text{if }t\in
    \mathcal{I}_\epsilon\setminus \mathcal{Y}\notag
    \\
    0 &\text{otherwise}.
    \end{cases}
    \end{align}
The \(\mathcal{Y}\) has been defined in (\ref{EQ_Y_definition}) and  we
  have shown the following.
\begin{itemize}
\item The set \(\{s\in \mathcal{I}_{\varepsilon}\ |\ F(s)=\infty\}\) has
measure zero.
\item The set \(L\) is countable and the set \(\mathcal{Y}\) has measure
zero.
\item If \(s\in \{\mathcal{I}_{\varepsilon}\ |\ F(s)<\infty\}\setminus
(L\cup\mathcal{Y}) \) then \( \{s\}\times M\) contains a periodic orbit.
\end{itemize} 
We are now prepared to prove the following.

\setcounter{CurrentSection}{\value{section}}
\setcounter{CurrentTheorem}{\value{theorem}}
\setcounter{section}{\value{CounterSectionMain}}
\setcounter{theorem}{\value{CounterTheoremMain}}
\begin{theorem}[Main Result]
  \hfill\\
Let \((W, \Omega)\) be a symplectic manifold without boundary, and let
  \(H\colon W\to \mathbb{R}\) be a smooth proper\footnote{By proper here, we
  mean that for each compact set \(\mathcal{K}\subset \mathbb{R}\), the set
  \(H^{-1}(\mathcal{K})\) is compact.} Hamiltonian.
Fix \(E_-, E_+ \in H(W) \subset \mathbb{R}\) with \(E_- < E_+\), as well
  as positive constants, \(C_g>0\), and \(C_\Omega>0\).
Suppose that for each \(\Omega\)-tame almost complex structure \(J\) on
  \(W\) there exists a proper pseudoholomorphic map 
  \begin{align*}                                                          
      u:(S,j)\to \{p\in W : E_- < H(p) < E_+ \} 
      \end{align*}
  without boundary, which also satisfies the following conditions:
  \vspace{5pt}
\begin{enumerate}                                                         
  \item \emph{({genus and area bounds})}
  The following inequalities hold:
  \begin{align*}                                                          
    {\rm Genus}(S) \leq C_g\qquad\text{and}\qquad \int_{S} u^*\Omega
    \leq C_\Omega.
    \end{align*}
  \item \emph{({energy surjectivity})}
  The map \(H\circ u : S\to (E_- , E_+)\) is
  surjective.\vspace{5pt}
  \end{enumerate}
Then there is a periodic Hamiltonian orbit on almost every energy level in
  range \((E_-, E_+)\).
That is, if we let \(\mathcal{I}\subset (E_-, E_+)\) denote the energy
  levels of \(H\) which contain a Hamiltonian periodic orbit, then
  \(\mathcal{I}\) has full measure:
  \begin{align*}                                                          
    \mu(\mathcal{I}) = \mu\big((E_-, E_+)\big) = E_+ - E_-.
    \end{align*}
\end{theorem}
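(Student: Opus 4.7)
The plan is to reduce the global statement to the localized almost existence result already established as Theorem \ref{THM_localized_almost_existence}. Since the regular values of \(H\) form a full-measure subset of \((E_-, E_+)\) by Sard's theorem, it suffices to show that every regular value \(E_0 \in (E_-, E_+)\) admits an open neighborhood on which almost every energy level contains a periodic orbit; a countable cover of \((E_-, E_+)\) by such neighborhoods, possible by separability, then yields the conclusion.

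Fix a regular value \(E_0\) and apply Lemma \ref{LEM_localization_to_framed_Hamiltonian_energy_pile} to the shifted Hamiltonian \(H - E_0\) to obtain \(\epsilon > 0\) with \((E_0 - \epsilon, E_0 + \epsilon) \subset (E_-, E_+)\), a framed Hamiltonian energy pile \((\mathcal{I}_\epsilon \times M_0, \Omega_0, \hat{\lambda})\), and a symplectic diffeomorphism \(\Phi \colon \mathcal{I}_\epsilon \times M_0 \to \{|H - E_0| < \epsilon\}\) intertwining \(H - E_0\) with \(s\), in which \(\hat{\rho}\) and \(\xi\) are \(\Omega_0\)-symplectic complements along \(\{0\} \times M_0\). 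Choose an almost complex structure \(\check{J}\) on the pile preserving the splitting \(\hat{\rho} \oplus \xi\) for which \((ds \wedge \hat{\lambda} + \hat{\omega}) \circ (\mathrm{Id} \times \check{J})\) is Riemannian, and a sequence \(\phi_k \colon (-\epsilon, \epsilon) \to (0, 1]\) converging to zero in \(C^\infty\) on some subinterval \((-\epsilon', \epsilon')\). Proposition \ref{PROP_adiabatically_degenerating_constructions} then produces almost complex structures \(J_k\) on the pile that are \(\Omega_0\)-tame everywhere on \(\mathcal{I}_\epsilon \times M_0\) and adiabatically degenerating on \(\mathcal{I}_{\epsilon'} \times M_0\). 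Fixing a background \(\Omega\)-tame \(J_W\) on \(W\) and smoothly interpolating across a collar near the boundary of the pile --- using the standard fact that the space of \(\Omega\)-tame almost complex structures on a symplectic vector space is contractible --- we extend each \(J_k\) to an \(\Omega\)-tame almost complex structure on all of \(W\), unchanged on \(\{|H - E_0| < \epsilon'\}\).

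Invoking the hypothesis of the theorem for each such \(J_k\) provides a proper pseudoholomorphic map \(u_k \colon (S_k, j_k) \to \{E_- < H < E_+\}\) satisfying the stated genus, area, and surjectivity bounds. Set \(S_k^0 := (H \circ u_k)^{-1}\bigl((E_0 - \epsilon', E_0 + \epsilon')\bigr)\), transport \(u_k|_{S_k^0}\) through \(\Phi^{-1}\) to a \(J_k\)-holomorphic map \(\tilde{u}_k \colon (S_k^0, j_k) \to (\mathcal{I}_{\epsilon'} \times M_0, J_k)\), and discard any connected components on which \(\tilde{u}_k\) is constant. Then \(\tilde{u}_k\) is a proper pseudoholomorphic map without boundary and without constant components. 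The genus bound \(\mathrm{Genus}(S_k^0) \leq \mathrm{Genus}(S_k) \leq C_g\) is immediate from Definition \ref{DEF_genus}, since every compact subsurface of \(S_k^0\) is a compact subsurface of \(S_k\); the symplectic area bound is inherited from \(\int_{S_k} u_k^*\Omega \leq C_\Omega\) by pointwise non-negativity of \(u_k^*\Omega\) under \(\Omega\)-tameness; and \(s \circ \tilde{u}_k\) surjects onto \(\mathcal{I}_{\epsilon'}\) except possibly for the countable set of values achieved only by discarded constant components. These missing levels may be absorbed into the measure-zero exceptional set appearing in the proof of Theorem \ref{THM_localized_almost_existence}, since that proof uses surjectivity only to force limiting pieces to meet each intermediate level, a purpose for which density of the image suffices. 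Theorem \ref{THM_localized_almost_existence} then delivers periodic orbits on almost every energy level in \((E_0 - \epsilon', E_0 + \epsilon')\), completing the local step; countable covering of \((E_-, E_+)\) by such neighborhoods yields the full conclusion.

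The main technical points requiring care are the smooth \(\Omega\)-tame extension of \(J_k\) from the pile to all of \(W\), and the verification that the surjectivity hypothesis in Theorem \ref{THM_localized_almost_existence} is preserved (up to countably many lost values) under restriction to \(S_k^0\) and removal of constant components. Both are routine once the reduction to the localized setting is in place, and neither presents a conceptual obstacle.
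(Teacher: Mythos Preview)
Your proposal is correct and follows essentially the same route as the paper: reduce via Sard and a countable cover to a neighborhood of a single regular value, invoke Lemma \ref{LEM_localization_to_framed_Hamiltonian_energy_pile} and Proposition \ref{PROP_adiabatically_degenerating_constructions} to build tame adiabatically degenerating \(J_k\), extend to \(W\), feed the hypothesis back in, restrict the resulting curves, and apply Theorem \ref{THM_localized_almost_existence}. The only notable difference is your treatment of constant components: the paper observes (using properness and continuity of \(u_k\)) that removing constant components preserves \emph{exact} energy surjectivity, which is cleaner than your appeal to density and absorption into the exceptional set inside the proof of Theorem \ref{THM_localized_almost_existence}.
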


%
\begin{proof}
\setcounter{section}{\value{CurrentSection}}
\setcounter{theorem}{\value{CurrentTheorem}}
In order to prove this result, we will make use of our localization
  results, but first we need to properly reframe the problem.
For notational convenience we begin by defining:
  \begin{align*}                                                          
    \widetilde{W} = \{p\in W : E_- < H(p) < E_+\}.
    \end{align*}
Next, for each \(c\in \mathbb{R}\), we define the function
  \begin{align*}                                                          
    &H^c\colon \widetilde{W}\to \mathbb{R}\\
    &H^c(q) = H(q)-c
    \end{align*}
  and observe that \(H\) and \(H^c\) generate identical Hamiltonian vector
  fields on \(\widetilde{W}\).
Consequently, \(\gamma\colon \mathbb{R}\to \widetilde{W}\) is a
  Hamiltonian periodic obit of \(H\) if and only if it is a Hamiltonian
  periodic orbit of \(H^c\).
Next we make the following claim.\\

\noindent{\bf Claim:} \emph{To prove Theorem \ref{THM_main_maybe}, it is
  sufficient to show that for each regular value \(c\in (E_-, E_+)\) of
  \(H\), there exists a \(\delta=\delta(c)>0\) such that the set of energy
  levels \(\{|H^c| < \delta\}\) containing a Hamiltonian periodic orbit has
  measure \(2\delta\).}\\

To see that this claim is true, we first consider the case that \(H\) has
  no critical points in \(\widetilde{W}\).
In this case, every \(c\in (E_-, E_+)\) is a regular energy value, and
  thus for each such \(c\) we define \(\delta_c=\delta(c)\) so that the
  set of energy levels \(\{|H^c| < \delta_c\}\) containing a Hamiltonian
  periodic orbit has measure \(2\delta_c\).\\
It follows that \(\{(c-\delta_c, c+\delta_c)\}_{E_- < c < E_+}\) is an
  open cover of \((E_-, E_+)\).
Using the fact that this is an open cover, together with the fact that the
  open interval \((E_-, E_+)\) can be written as the
  countable union of compact intervals, for example 
  \begin{align*}                                                          
    (E_-, E_+) = \bigcup_{\mathcal{I}\in \mathcal{E}}\mathcal{I} \quad
    \text{where} \quad \mathcal{E} = \Big\{[E_-+{\textstyle\frac{1}{n}}L,
    E_+- {\textstyle \frac{1}{n}}L]\Big\}_{n\in \mathbb{N}}\quad\text{and}
    \textstyle \quad L = {\textstyle\frac{E_+-E_-}{4}}, 
    \end{align*}
  it follows that there exists a countable set \(\{c_i\}_{i\in
  \mathbb{N}}\) such that
  \begin{align*}                                                          
    (E_-, E_+) = \bigcup_{i\in \mathbb{N}} (c_i - \delta_{c_i} , c_i
    +\delta_{c_i}).
    \end{align*}
That the set of energy levels in \((E_-, E_+)\) has measure \(E_+-E_-\)
  then follows essentially from countable additivity.
More specifically, if \(\Xi\) denotes those energy levels in \((E_-,
  E_+)\) which have a periodic orbit, and \(A_i = (c_i-\delta_{c_i},
  c_i+\delta_{c_i})\), and \(B_n = A_n\setminus
  \cup_{i=1}^{n-1} A_i\), then
  \begin{align*}                                                          
    \mu(\Xi) 
    = \mu\Big(\Xi \cap \bigcup_{n=1}^\infty B_n\Big)
    = \sum_{n=1}^\infty\mu\Big(\Xi \cap B_n\Big)
    = \sum_{n=1}^\infty\mu\Big(B_n\Big)
    = \mu\Big(\bigcup_{n=1}^\infty B_n\Big)
    = E_+-E_-.
    \end{align*}
The case that \(H\colon \widetilde{W}\to \mathbb{R}\) has critical points
  is treated similarly, by first observing that the set of critical values
  of \(H\) is closed and has measure zero.
Thus the set of regular values of \(H\) has full measure and can be
  written as the disjoint union of open intervals.
The claim is then established by another application of countable
  additivity.

With the claim established, we can now apply Lemma
  \ref{LEM_localization_to_framed_Hamiltonian_energy_pile} to the
  Hamiltonian \(H^c \colon \widetilde{W}\to \mathbb{R}\) for each regular
  value \(c\) of \(H\).
For each such \(c\), this establishes a framed Hamiltonian energy pile
  \((\mathcal{I}_{\epsilon^c}\times M, \Omega^c, \hat{\lambda}^c)\), and a
  diffeomorphism
  \begin{align*}                                                          
    \Phi^c\colon \mathcal{I}_{\epsilon^c}\times M\to \{|H^c| <
    \epsilon^c\}
    \end{align*}
  for which \(H^c\circ \Phi^c(s,p) = s\) and \((\Phi^c)^*\Omega =
  \Omega^c\).
Moreover, we obtain associated structures \(\hat{\rho}^c\), \(\xi^c\) and
  \(\hat{\omega}^c\) on \(\mathcal{I}_{\epsilon^c}\times M\), and Lemma
  \ref{LEM_localization_to_framed_Hamiltonian_energy_pile} also guarantees
  that along \(\{0\}\times M\) we have \(\hat{\lambda}^c(X_{H^c})=1\) and
  also along \(\{0\}\times M\) the sub-bundles \(\hat{\rho}^c\) and
  \(\xi^c\) are symplectic complements.
We then observe that in light of the above claim we have just established,
  it follows that in order to complete the proof of Theorem
  \ref{THM_main_maybe} it is sufficient to show that for each such \(c\),
  there exists a \(\delta^c >0 \) such almost every energy level of the
  framed Hamiltonian energy pile \((\mathcal{I}_{\delta^c}\times M,
  \Omega^c, \hat{\lambda}^c)\) has a Hamiltonian periodic orbit.

To find such a \(\delta^c>0\), the aim will be to apply Proposition
  \ref{PROP_adiabatically_degenerating_constructions}. However to do that we
  must first have at our disposal a sequence of suitable almost complex
  structures.
To construct these, we start by defining an almost complex \(J^c\)
  on \(\mathcal{I}_{\epsilon^c}\times M\) by requiring that \(J^c
  \partial_s =\widehat{X}^c \) and that \(J^c\colon \xi^c\to \xi^c\) have
  the property that \(J_{\xi^c}:= J^c\big|_{\xi^c}\) be translation
  invariant, and that \(\hat{\omega}^c\circ ({\rm Id}\times J_{\xi^c})\) is
  symmetric and positive definite.
We then treat \(J^c\) as a constant sequence and apply Proposition
  \ref{PROP_adiabatically_degenerating_constructions} which guarantees the
  existence of an \(\ell^c\) (stated in the proposition as \(\epsilon\))
  with the following significance.
Let \(\phi_k^c\colon \mathcal{I}_{\ell^c} \to (0, 1]\) be a sequence of
  functions which converge in \(C^\infty\) to a limit function
  \(\phi_\infty^c\) which satisfies
\begin{align*}                                                            
  \phi_\infty^c(s) =
  \begin{cases}
  1 & \text{if } |s|\geq {\textstyle \frac{1}{2}}\ell^c\\ 
  0 & \text{if } |s|\leq {\textstyle \frac{1}{4}}\ell^c.
  \end{cases}
  \end{align*}
Then a consequence of Proposition
  \ref{PROP_adiabatically_degenerating_constructions} is that the almost
  complex structures defined by
  \begin{align*}                                                          
    \phi_k^c \cdot J_k \partial_s = \widehat{X}^c\qquad\text{and}\qquad
    J_k\big|_{\xi^c} = J_{\xi^c}
    \end{align*}
  are each \(\Omega^c\)-tame on \(\mathcal{I}_{\ell^c}\times M\), and they
  are adiabatically degenerating on \(\mathcal{I}_{\ell^c/4}\times M\).
We define \(\delta^c:= \frac{1}{4}\ell^c\).
We then observe that because these almost complex structures are all tame,
  on \(\mathcal{I}_{\ell^c}\times M\) they can be seen as arising as \(J_k
  = (\Phi^c)^*\widetilde{J}_k\) for some \(\Omega\)-tame almost complex
  structures \(\widetilde{J}_k\) on \(\widetilde{W}\).
The hypotheses of Theorem \ref{THM_main_maybe} then guarantee the
  existence of a sequence of pseudoholomorphic curves with bounded symplectic
  area and genus, and which span the energy levels
  in \(\mathcal{I}_{\delta^c}\times M\), while the almost complex structures
  are adiabatically degenerating on \(\mathcal{I}_{\delta^c}\times M\).
Recall that second countability of the domains of these pseudoholomorphic
  curves guarantees that the set of connected components on which each is a
  constant map is countable, and hence for any such curve the set of energy
  levels containing a constant component is countable.
Making use of the fact that our pseudoholomorphic maps are continuous and
  proper, it follows that one may remove the constant connected components
  while still guaranteeing ``energy surjectivity.''
Thus after removing constant components, we may apply Theorem
  \ref{THM_localized_almost_existence}, which guarantees that the set of
  energy levels with periodic orbits has full measure in \((-\delta^c,
  \delta^c)\).
Because we have reduced the proof of Theorem \ref{THM_main_maybe} to
  establishing just this result, we see that we have completed the proof of
  Theorem \ref{THM_main_maybe}.
\end{proof}

With Theorem \ref{THM_main_maybe} established, we now prove Theorem 
  \ref{THM_existence_and_almost_existence}.

\setcounter{CurrentSection}{\value{section}}
\setcounter{CurrentTheorem}{\value{theorem}}
\setcounter{section}{\value{CounterSectionIntertwine}}
\setcounter{theorem}{\value{CounterTheoremIntertwine}}
\begin{theorem}[intertwining existence and almost existence]
  \hfill\\
Let \((W, \Omega)\) be a four-dimensional compact connected exact symplectic
  manifold with boundary \(\partial W = M^+ \cup M^-\).
Suppose \(M^+\) is positive contact type in the sense of Definition
  \ref{DEF_positive_contact_type_boundary}, and suppose that one of the
  following three conditions holds:
\begin{enumerate}                                                         
  \item 
  \(M^+\) has a connected component diffeomorphic to \(S^3\),
  \item 
  there exists an embedded \(S^2\subset M^+\) which is homotopically
  nontrivial in \(W\),
  \item 
  \((M^+, \lambda)\) has a connected component which is overtwisted.
  \end{enumerate}
Then for each Hamiltonian \(H\in C^\infty(W)\) for which
  \(H^{-1}(\pm 1) = M^\pm\), the following is true.
For each \(s\in [-1,1]\) the energy level \(H^{-1}(s)\) contains a closed
  non-empty set other than the energy level \(H^{-1}(s)\) itself which is
  invariant under the Hamiltonian flow of \(X_H\); moreover for almost every
  \(s\in [-1, 1]\) this closed invariant subset is a periodic orbit.
\end{theorem}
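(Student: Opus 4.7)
The plan is to reduce the theorem to the Main Result \ref{THM_main_maybe} and then upgrade almost existence to the every-level invariant-subset statement by re-entering its proof and invoking the feral Gromov compactness of \cite{FH2}. First I would symplectically complete $W$ into a non-compact symplectic manifold $\widehat{W}$ without boundary by using the outward Liouville vector field $Y$, supplied by the positive contact type assumption, to glue on the positive symplectization $([1,\infty)\times M^+, d(e^r\lambda))$ along $M^+$, and extending $\Omega$ past $M^-$ by any local tubular extension. The Hamiltonian $H$ then extends to a smooth proper $\widehat{H}\colon \widehat{W}\to \mathbb{R}$ which agrees with $H$ on $W$, depends only on the symplectization coordinate on the positive end where it tends to $+\infty$, and tends to $-\infty$ past $M^-$; the dynamics on the levels $\widehat{H}^{-1}(s)$ for $s\in[-1,1]$ are exactly those of $X_H$.

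The second step is to produce, for every $\Omega$-tame almost complex structure $J$ on $\widehat{W}$, a proper pseudoholomorphic curve of genus zero and uniformly bounded $\Omega$-energy whose image meets every level in $(-1, 1)$. Under each of the three hypotheses, I would appeal to the corresponding construction from \cite{Hofer:1993}: a Bishop family on a symplectic ball filling an $S^3$ component in case~(1), Gromov's argument with spheres in the nontrivial class represented by the embedded $S^2$ in case~(2), and the overtwisted-disk construction in case~(3). In all three cases one obtains, after a standard perturbation, a proper pseudoholomorphic plane or sphere in $\widehat{W}$ whose core lies inside $W$ and whose asymptotic end escapes to the positive symplectization; by connectedness its image sweeps every intermediate energy level, the $\Omega$-energy is controlled by the action of the asymptotic Reeb orbit (or by the area of the $S^2$-class), and the genus is zero. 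Feeding this input into Theorem \ref{THM_main_maybe} with $E_-=-1$ and $E_+=1$ immediately yields that almost every $s\in(-1,1)$ has a periodic orbit of $X_H$ on $H^{-1}(s)$.

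For the stronger claim that every level $H^{-1}(s)$, $s\in [-1,1]$, contains a closed non-empty $X_H$-invariant proper subset, I would re-enter the proof of Theorem \ref{THM_localized_almost_existence} and neck-stretch using Proposition \ref{PROP_adiabatically_degenerating_constructions} along a designated level $\{s_0\}\times M$. If $F(s_0)<\infty$ in the sense of Proposition \ref{PROP_bounded_F_yields_periodic_orbit}, the argument there directly produces a periodic orbit on $H^{-1}(s_0)$. If instead $F(s_0)=+\infty$, then one applies the full feral Gromov compactness of \cite{FH2}, whose limit furnishes a feral pseudoholomorphic configuration with image in $\{s_0\}\times M$ and tangents in $\ker\hat\omega = \mathbb{R}\widehat{X}$. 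Its projection to $H^{-1}(s_0)$ is therefore a closed non-empty $X_H$-invariant subset, and the $C_\Omega$ bound on the original curves forces a finite $\hat\omega$-mass bound on the limit, so this subset is strictly smaller than the whole $3$-dimensional level. The boundary levels $s=\pm 1$ are treated in the same way inside $\widehat{W}$; at $s=1$ one in fact recovers the Reeb periodic orbit on $M^+$ of \cite{Hofer:1993}.

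The main obstacle will be the third step rather than the second. Invoking the feral Gromov compactness of \cite{FH2} at a single designated level involves careful bookkeeping of the limit structure (including domain components that may carry infinitely many punctures or non-trivial feral ends) and a quantitative argument translating the $C_\Omega$ bound into a bound on the $\hat\omega$-measure of the projected image that strictly rules out filling all of $H^{-1}(s_0)$. Producing the Hofer-style pseudoholomorphic curves is comparatively more routine, though verifying energy surjectivity for every tame $J$ on $\widehat{W}$ still requires a careful escape-to-infinity argument on the positive symplectization end in each of the three cases.
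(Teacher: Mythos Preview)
Your almost-existence argument is correct in outline and close to the paper's, though the paper makes a different choice of curves: for the tight $S^3$ case it caps $W$ along the spherical boundary component with $\mathbb{C}P^2\setminus B^4$ (rather than attaching a symplectization end) and uses the closed spheres in the $[\mathbb{C}P^1]$ class. Automatic transversality together with exactness of $\Omega$ on $W$ (hence no bubbling) yields a compact family that must sweep every level, and restriction to $H^{-1}((E_-,E_+))$ feeds Theorem~\ref{THM_main_maybe}. Your Hofer-plane approach in the completion should also work, but the energy-surjectivity check is less direct than for a compact moduli space of closed spheres.

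The genuine gap is in your every-level argument. The paper does not re-enter the adiabatic machinery at all: it handles critical levels with constant orbits, the level $s=1$ via \cite{Hofer:1993}, and every regular level $s_0\in[-1,1)$ by invoking Theorem~2 of \cite{FH2} as a black box for non-minimality. Your proposal instead splits on whether $F(s_0)<\infty$, but this cannot work as written. The function $F$ and the exceptional sets $L,\mathcal{Y}$ are produced by one fixed diagonal subsequence of the adiabatic degeneration over the whole interval; Proposition~\ref{PROP_bounded_F_yields_periodic_orbit} requires $s_0\notin L\cup\mathcal{Y}$ in addition to $F(s_0)<\infty$, and the adiabatic proof says nothing at the exceptional levels. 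Treating an \emph{arbitrary} designated $s_0$ requires instead the single-hypersurface neck stretch of \cite{FH2}, an SFT-type degeneration concentrated at $\{s_0\}\times M$ rather than the simultaneous adiabatic one used here; that is a different construction with its own compactness theorem, not a branch of the present proof. Moreover, in your $F(s_0)=\infty$ branch the mechanism forcing the limit invariant set to be a \emph{proper} subset of the level is not a $\hat\omega$-mass bound (the relevant limit curve lands in $\ker\hat\omega$ and carries zero $\hat\omega$-area regardless); it is the feral-end structure theory of \cite{FH2}, which your sketch does not actually invoke.
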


%
\begin{proof}
\setcounter{section}{\value{CurrentSection}}
\setcounter{theorem}{\value{CurrentTheorem}}
First observe that if \(s_0\in [-1, 1]\) is a critical value of \(H\),
  then there exists \(p \in H^{-1}(s_0)\) such that \(d H(p) = 0\), and
  hence the constant trajectory
  \begin{align*}                                                          
    &\gamma\colon \mathbb{R}\to H^{-1}(s_0)
    \\
    &\gamma(t) = p
    \end{align*}
  is periodic orbit.
Also note that because \(M^+ = H^{-1}(1)\) is contact type, and is either
  \(S^3\), overtwisted, or contains a homotopically non-trivial \(S^2\), it
  follows from \cite{H93} that \(M^+\) has a periodic orbit.
Then for any regular value \(s_0\in [-1, 1)\), it follows from \cite{FH2}
  (specifically Theorem 2) that the flow of \(X_H\) on \(H^{-1}(s_0)\) is
  not minimal.
This establishes that each energy level \(H^{-1}(s_0)\) with \(s\in [-1,
  1]\) is not minimal.
To complete the proof of Theorem
  \ref{THM_existence_and_almost_existence}, it remains to show that almost
  every energy level \(H^{-1}(s)\) for \(s\in (-1, 1)\) has a
  periodic orbit.
This follows from Thereom \ref{THM_main_maybe} above, provided we can
  guarantee the existence of the required pseudoholomorphic curves for
  arbitrary tame almost complex structure.
However, this is fairly standard and the relevant details are provided in
  \cite{FH2} (specifically the proof of Theorem 2), however we recall the
  key points here.

Choose an \(\Omega\)-tame almost complex structure \(J\) on \(W\), and fix
  regular values \(E_-\) and \(E_+\) of \(H\) so that
  \begin{align*}                                                          
    -1 < E_- < E_+ < 1.
    \end{align*}
Consider the case that tight \(S^3\) is a connected component of \(M^+\).
Then along this spherical component, \(W\) can be symplectically capped
  off by \(\mathbb{C}P^2\setminus \mathcal{O}\) where \(\mathcal{O}\) is
  diffeomorphic to the four-ball, and the resulting symplectic manifold we
  denote by \((\widetilde{W}, \widetilde{\Omega})\).
Note that in order to guatantee that \(\widetilde{\Omega}\) is indeed
  symplectic, one may need to require that its restriction to
  \(\mathbb{C}P^2\setminus \mathcal{O} \subset \widetilde{W}\) be a large
  constant multiple of the Fubini-Study metric.
By adjusting \(J\) in a neighborhood of \(S^3 \subset M^+\), the almost
  complex structure \(J\) can be extended to an \(\widetilde{\Omega}\)-tame
  almost complex structure \(\widetilde{J}\) on \(\widetilde{W}\), which
  is also standard in a neighborhood of \(\mathbb{C}P^1 \subset
  \mathbb{C}P^2\setminus \mathcal{O}\).
Define the constant \(C_{\widetilde{\Omega}}:= \int_{\mathbb{C}P^1}
  \widetilde{\Omega}\) to be the \(\widetilde{\Omega}\)-area of this sphere
  at infinity.
One then considers the connected component of the moduli space of
  \(\widetilde{J}\)-pseudoholomorphic curves which contain this
  \(\mathbb{C}P^1 \subset \mathbb{C}P^2\setminus \mathcal{O} \subset
  \widetilde{W}\).
As detailed in \cite{FH2}, automatic transversality guarantees that this
  four real dimensional moduli space (of unparameterized curves) is cut
  out transversely, each distinct pair of curves intersects at exactly
  one point, and by requiring \(\Omega\) to be exact there cannot be any
  ``bubbles'' that arise in the compactification.
Homotopy invariance of intersection numbers guarantees that each curve in
  this moduli space intersects \(\mathbb{C}P^1 \subset
  \mathbb{C}P^2\setminus \mathcal{O} \subset \widetilde{W}\).
It remains to show that this family of curves satisfies the energy
  surjectivity condition; that is, that this family of curves extends from
  \(M^+ = H^{-1}(1)\) to every energy level \(H^{-1}(s)\) for \(s\in
  (-1,1)\).
This too is detailed in \cite{FH2}, and follows from a mixture of
  automatic transversality and compactness of the family of curves; the
  latter is guaranteed by the fact that \(\widetilde{J}\) is
  \(\widetilde{\Omega}\)-tame.
This then guarantees energy surjectivity.
That is, for each \(\Omega\)-tame almost complex structure \(J\), there
  exists a pseudoholomorphic map \(\tilde{u}:(\widetilde{S},j)\to
  (\widetilde{W},\widetilde{J})\) with the property that the restricted map
  \begin{align*}                                                          
    u:=\tilde{u}\big|_{S}\qquad \text{where} \qquad S:= \big\{z\in
    \widetilde{S} : H\circ \tilde{u}(z) \in (E_-, E_+)\big\}
    \end{align*}
  satisfies  \(H\circ u(S) = (E_-, E_+)\).
It is also easily checked that \({\rm genus}(S) = 0\) and
  \(\int_{S}u^*\widetilde{\Omega} \leq C_{\widetilde{\Omega}}\).
Consequently, Theorem \ref{THM_main_maybe} applies, and hence almost every
  energy level in \((E_-, E_+)\) contains a periodic orbit.
By letting \(E_-\to -1\) and \(E_+\to 1\), the desired result is immediate.

This covers the tight \(S^3\) case; the overtwisted case and the
  homotopically non-trivial \(S^2\subset M^+\) case are very similar,
  although the mechanism to generate the curves is different.
The reader is directed to \cite{FH2} for the details.

\end{proof}

\appendix

\section{Miscellaneous Support}

This section is mostly devoted to providing a few support definitions and
  results which are important but are otherwise a bit of a distraction from
  more important arguments.

\subsection{Supporting Proofs}\label{SEC_supporting_proofs}

Our primary goal of this section is to prove Proposition
  \ref{PROP_adiabatically_degenerating_constructions}, however to do so we
  must first establish a few important supporting lemmata.
The first of these is the following.

\begin{lemma}[cross term control]
  \label{LEM_cross_term_control}
  \hfill\\
Let \(\big(\mathcal{I}_{\ell}\times M, \Omega, \hat{\lambda}\big)\)
  be a framed Hamiltonian energy pile, and let \(\hat{\rho}\)  and \(\xi\)
  be the associated structures defined in Section \ref{SEC_background},
  and assume as in the conclusions of Lemma
  \ref{LEM_localization_to_framed_Hamiltonian_energy_pile} that along
  \(\{0\}\times M\), the sub-bundles \(\hat{\rho}\) and \(\xi\) are
  \(\Omega\)-symplectic complements.
Let \(J_0\) be an \(\Omega\)-compatible almost complex structure on
  \(\mathcal{I}_{\ell}\times M\), with associated Riemannian metric
  \(g_0= \Omega\circ ({\rm Id}\times J_0) \).
Then for each \(\delta>0\), there exists an \(\epsilon=\epsilon(\delta,
  \Omega, \hat{\lambda}, \hat{\omega}, J_0) \in (0,\ell)\), with the
  property that for each \(v = v^{\hat{\rho}}  + v^\xi \in
  T(\mathcal{I}_{\epsilon} \times M)\) with \(v^{\hat{\rho}}\in
  \hat{\rho}\) and \(v^\xi\in \xi\) we have
  \begin{align*}                                                          
    |\Omega(v^{\hat{\rho}}, v^\xi)| \leq \delta \|v^{\hat{\rho}}\|_{g_0}
    \|v^\xi\|_{g_0}.
    \end{align*}
\end{lemma}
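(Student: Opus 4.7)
My plan is to reduce the inequality to a continuity/compactness statement on a sphere bundle. First I would note that both sides of the bound are homogeneous of degree one in $v^{\hat{\rho}}$ and of degree one in $v^\xi$, so it suffices to prove it when each of these two vectors is a $g_0$-unit vector; the case in which either vanishes is trivial. Accordingly, I introduce the $g_0$-unit sphere subbundles $S\hat{\rho} \subset \hat{\rho}$ and $S\xi \subset \xi$, and form the fiber product
\[
E := S\hat{\rho} \times_{\mathcal{I}_{\ell} \times M} S\xi \xrightarrow{\pi} \mathcal{I}_{\ell} \times M,
\]
which is a smooth fiber bundle whose fibers are products of two spheres in finite-dimensional vector spaces. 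Define the continuous function $\Phi \colon E \to \mathbb{R}$ by $\Phi(p, u, w) := \Omega_p(u, w)$. The lemma follows once I produce, for each $\delta > 0$, an $\epsilon > 0$ with $|\Phi| \le \delta$ on $\pi^{-1}(\mathcal{I}_{\epsilon} \times M)$.

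Next I would invoke the standing hypothesis from Lemma \ref{LEM_localization_to_framed_Hamiltonian_energy_pile}: the subbundles $\hat{\rho}$ and $\xi$ are $\Omega$-symplectic complements along $\{0\} \times M$. This is exactly the statement $\Phi \equiv 0$ on $E|_{\{0\} \times M}$. Since $M$ is a closed manifold and the fibers of $E$ are compact, the restriction $E|_{\{0\} \times M}$ is a compact subset of $E$, and hence the open set $U := \Phi^{-1}\bigl((-\delta, \delta)\bigr)$ is an open neighborhood of it.

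Finally I would apply a tube-lemma argument. Because $\pi$ is a fiber bundle with compact fiber, $\pi$ is a proper map, so the closed set $E \setminus U$ has closed image $\pi(E \setminus U) \subset \mathcal{I}_{\ell} \times M$, and this image is disjoint from $\{0\} \times M$. Compactness of $M$ then yields an $\epsilon > 0$ such that $\mathcal{I}_{\epsilon} \times M$ is disjoint from $\pi(E \setminus U)$, which is equivalent to $\pi^{-1}(\mathcal{I}_{\epsilon} \times M) \subset U$. Scaling back to arbitrary $v^{\hat{\rho}}$ and $v^\xi$ via the homogeneity observation yields the claimed bound, with the evident dependence $\epsilon = \epsilon(\delta, \Omega, \hat{\lambda}, \hat{\omega}, J_0)$.

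I do not expect a serious technical obstacle here: the argument is purely continuity plus compactness. The only care required is in the construction of the sphere bundle $E$ and in verifying that $\pi$ is proper, both of which are immediate since $M$ is closed and the two distributions $\hat{\rho}$ and $\xi$ have constant finite rank. The symplectic-complement hypothesis along $\{0\} \times M$ is doing essentially all of the work, while the adiabatic parameter $\epsilon$ simply quantifies how far into $\mathcal{I}_{\ell}$ the defect from that complement property remains below the tolerance $\delta$.
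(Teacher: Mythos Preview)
Your proposal is correct and follows essentially the same approach as the paper. The paper's proof defines $\Theta(s) := \sup_{v\in\hat{\rho}_s\setminus\{0\},\, w\in\xi_s\setminus\{0\}} |\Omega(v,w)|/(\|v\|_{g_0}\|w\|_{g_0})$, observes that $\Theta$ is continuous with $\Theta(0)=0$, and concludes; your sphere-bundle/tube-lemma packaging is an equivalent way to encode the same continuity-plus-compactness argument.
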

%
\begin{proof}
Given $s\in \mathcal{I}_{\ell}$ we consider the embedding
\[
M\rightarrow \mathcal{I}_{\ell}\times M\colon m\rightarrow (s,m)
\]
and the pull-backs of $\hat{\rho}\rightarrow \mathcal{I}_{\ell}\times M$ and
  \(\xi\to \mathcal{I}_{\ell}\times M\), respectively. We denote them by
  \(\hat{\rho}_s\) and \(\xi_s\). 
Then the pull-back of the tangent space \( T(\mathcal{I}_{\ell}\times M)\)
  to \(M\) by the same map has the direct sum decomposition
  $\hat{\rho}_s\oplus\xi_s$.  In the case of $s=0$ this decomposition is
  $\Omega$-orthogonal.
We define the function
\begin{align*}                                                            
  &\Theta\colon (-\ell,\ell) \to [0, \infty)
  \\ 
  &\Theta(s):=\sup_{\substack{ v \in \hat{\rho}_s \setminus \{0\}\\ w \in
  \xi_s \setminus \{0\}}} \frac{|\Omega(v,w)|}{\|v\|_{g_0} \|w\|_{g_0}}.
  \end{align*}
It is straightforward to show that \(\Theta\) is continuous and  
  and that \(\Theta(0)=0\). 
By compactness 
\begin{align*}                                                            
  &\tilde{\Theta}\colon [0,\ell)\rightarrow {\mathbb R}\\
  &\varepsilon\rightarrow \max_{s\in [-\varepsilon,\varepsilon]} \Theta(s)
  \end{align*}

  is also continuous and \(\tilde{\Theta}(0)=0\).
The desired result is then immediate.
\end{proof}

In order to proceed further, we will need to define a certain metric bound
  on the geometry of a weakly adapted almost complex structure.
We denote this quantity \(|\rangle J\langle |\), and establish it as
  follows.
Given a framed Hamiltonian energy pile
  \[
  (\mathcal{I}_{\ell}\times M, \Omega, \hat{\lambda}),
  \]
  we first fix a background metric \(g_0\) on \(\mathcal{I}_{\ell}\times M\)
  associated to an auxiliary \(\Omega\)-compatible almost complex
  structure \(J_0\) by the usual formula \(g_0:=\Omega\circ ({\rm Id}
  \times J_0)\).  
Of course, near the ends of \(\mathcal{I}_{\ell}\times M\)  the metric
  $g_0$ might not behave well.
Our Hamiltonian energy pile comes with the structures \(\hat{\omega}\) and
  \(\xi\).
Given any other weakly adapted almost complex structure \(J\) we define
  the following quantity, where \(\ell'\in (0,\ell)\):
  \begin{align}\label{EQ_contrived}                                       
    &|\rangle J\langle |_{(\mathcal{I}_{\ell'} \times M, \Omega, 
    \hat{\lambda}, \hat{\omega}, J_0)} := 
    \sup_{\substack{q\in \mathcal{I}_{\ell'}\times M\\ v_q^\xi\in
    \xi_q\setminus \{0\} }}
    {\rm max}\Bigg( 
    \frac{\|v_q^\xi\|_{g_J}}{\|v_q^\xi\|_{g_0}},
    \frac{\|v_q^\xi\|_{g_0}}{\|v_q^\xi\|_{g_J}},
    \frac{\|Jv_q^\xi\|_{g_0}}{\|v_q^\xi\|_{g_0}},
    \frac{\|v_q^\xi\|_{g_0}}{\|Jv_q^\xi\|_{g_0}}
    \Bigg);
    \end{align}
  here \(g_J\) is the metric as defined in equation
  (\ref{EQ_Riemannian_metric}) in Definition \ref{DEF_weakly_adapted_J}.

An immediate benefit of such a definition is that whenever it is the case
  that \(|\rangle J\langle | \leq C_1\), it is also the case that the
  following hold for each \(v^\xi \in \xi\) lying above a point in
  \(\mathcal{I}_{\ell'}\times M\)
  \begin{align}                                                           
    \|v^\xi\|_{g_0} &\leq C_1 \|v^\xi\|_{g_J}\label{EQ_g0_gJ}\\
    \|v^\xi\|_{g_J} &\leq C_1 \|v^\xi\|_{g_0}\\
    \|Jv_q^\xi\|_{g_0} &\leq C_1 \|v_q^\xi\|_{g_0} \label{EQ_Jg0_g0}\\
    \|v_q^\xi\|_{g_0}  &\leq C_1 \|Jv_q^\xi\|_{g_0}.
    \end{align}
It is also worth noting that if \(J_1\) and \(J_2\) are weakly adapted
  almost complex structures which agree on \(\xi\), then \( |\rangle
  J_1\langle | =|\rangle J_2\langle | \). 
Indeed, the quantity \(|\rangle J\langle |\) depends only on
  \(J\big|_\xi\).
With this definition established, we can now proceed with an important
  application.

\begin{proposition}[metric area controlled by symplectic area]
  \label{PROP_metric_symplectic}
  \hfill\\
Let \(\big(\mathcal{I}_{\ell}\times M, \Omega, \hat{\lambda}\big)\) be a
  framed Hamiltonian energy pile, and let \(\hat{\rho}\), \(\xi\), and
  \(\hat{\omega}\) be the associated structures defined in equations
  (\ref{Hope}), (\ref{EQQQ_xi}), (\ref{EQ_xi}), and assume,
  as in the conclusions of Lemma
  \ref{LEM_localization_to_framed_Hamiltonian_energy_pile}, that along
  \(\{0\}\times M\) the sub-bundles \(\hat{\rho}\) and \(\xi\) are
  symplectic complements and \(\hat{\lambda}(X_H)=1\).
Let \(J_0\) be an auxiliary \(\Omega\)-compatible almost complex structure
  on \(\mathcal{I}_{\ell}\times M\), with associated Riemannian metric
  \(g_0=\Omega\circ({\rm Id} \times J_0)\).
Fix a large positive constant \(C_1>1\). 
Then there exists a number \(\epsilon=\epsilon(\Omega, \hat{\lambda},
  \hat{\omega}, J_0) \in (0, \ell)\) with the property that after trimming
  \(\mathcal{I}_{\ell}\times M\) to \(\mathcal{I}_\epsilon\times M\), the
  following holds.
If \(J\) is a weakly adapted almost complex structure for which 
  \begin{align*}                                                          
    |\rangle J\langle |_{(\mathcal{I}_\epsilon \times M, \Omega,
    \hat{\lambda}, \hat{\omega}, J_0)}  \leq C_1
    \end{align*}
  in the sense of equation (\ref{EQ_contrived}), then 
  \begin{align*}                                                          
    (ds \wedge \hat{\lambda} + \hat{\omega})(v, Jv )\leq 2 \Omega(v , J
    v)
    \end{align*}
  for each \(v\in T(\mathcal{I}_\epsilon \times M)\).  
Here $J$ only needs to be defined over \(\mathcal{J}_{\varepsilon}\times M
  \).
\end{proposition}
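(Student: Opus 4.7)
The plan is to decompose every tangent vector according to the splitting $T(\mathcal{I}_\epsilon \times M) = \hat{\rho} \oplus \xi$, perform a direct two-dimensional computation on the $\hat{\rho}$-part using the weakly adapted relation $\phi \cdot J\partial_s = \widehat{X}$, and then absorb the $\hat{\rho}$--$\xi$ cross terms using Lemma \ref{LEM_cross_term_control} together with the hypothesis $|\rangle J\langle| \leq C_1$. Writing $v = v^{\hat{\rho}} + v^\xi$ and using that $J$ preserves the splitting, that $ds \wedge \hat{\lambda}$ annihilates $\xi$, and that $\hat{\omega}$ annihilates $\hat{\rho}$, the inequality reduces immediately to
\[
A' + B \leq 2(A + B + C),
\]
where $A := \Omega(v^{\hat{\rho}}, Jv^{\hat{\rho}})$, $A' := (ds\wedge\hat{\lambda})(v^{\hat{\rho}}, Jv^{\hat{\rho}})$, $B := \hat{\omega}(v^\xi, Jv^\xi) = \Omega(v^\xi, Jv^\xi)$, and $C := \Omega(v^{\hat{\rho}}, Jv^\xi) + \Omega(v^\xi, Jv^{\hat{\rho}})$.

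Writing $v^{\hat{\rho}} = a\,\partial_s + b\,X_H$ and using $\Omega(\partial_s, X_H) = 1$, $(ds \wedge \hat{\lambda})(\partial_s, X_H) = \hat{\lambda}(X_H)$, $J\partial_s = X_H/(\phi \hat{\lambda}(X_H))$, and $JX_H = -\phi \hat{\lambda}(X_H)\,\partial_s$, a direct computation produces the clean identities
\[
A = \frac{a^2}{\phi\,\hat{\lambda}(X_H)} + b^2\,\phi\,\hat{\lambda}(X_H), \qquad A' = \hat{\lambda}(X_H)\cdot A.
\]
Because $\hat{\lambda}(X_H)=1$ along $\{0\}\times M$ by Lemma \ref{LEM_localization_to_framed_Hamiltonian_energy_pile}, after shrinking $\epsilon$ we may assume $\hat{\lambda}(X_H) \leq \tfrac{3}{2}$ throughout $\mathcal{I}_\epsilon\times M$. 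The target inequality then reduces to $B + 2C \geq -A/2$, which is automatic when $C \geq 0$ and otherwise requires $|C| \leq A/4 + B/2$.

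The key step, and the main obstacle, is a structural cancellation that removes the apparently degenerating $1/\phi$ factor in $Jv^{\hat{\rho}}$ from the cross term. Using $i_{X_H}\Omega = -ds$ together with $v^\xi, Jv^\xi \in \ker ds$, expanding each summand in $C$ yields
\[
C = a\,\Omega(\partial_s, Jv^\xi) + b\,\phi\,\hat{\lambda}(X_H)\,\Omega(\partial_s, v^\xi),
\]
so that $C$ carries no $1/\phi$ factor. Lemma \ref{LEM_cross_term_control} supplies, upon shrinking $\epsilon$ further, the bound $|\Omega(\partial_s, w)| \leq \delta\,\|\partial_s\|_{g_0}\,\|w\|_{g_0}$ for any prescribed $\delta>0$ and all $w \in \xi$, while the hypothesis $|\rangle J\langle| \leq C_1$ combined with $B = \|v^\xi\|_{g_J}^2$ yields $\|v^\xi\|_{g_0},\,\|Jv^\xi\|_{g_0} \leq C_1^2\sqrt{B}$. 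These estimates give $|C| \leq \delta K\bigl(|a|\sqrt{B} + |b|\,\phi\,\hat{\lambda}(X_H)\,\sqrt{B}\bigr)$ for a constant $K$ depending only on $C_1$ and $\sup_{\mathcal{I}_{\ell/2}\times M}\|\partial_s\|_{g_0}$.

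Finally, two applications of weighted AM-GM with weight $\phi\,\hat{\lambda}(X_H)$ give
\[
|a|\sqrt{B} + |b|\,\phi\,\hat{\lambda}(X_H)\,\sqrt{B} \leq A + \tfrac{3}{2} B,
\]
whence $|C| \leq \delta K\,(A + \tfrac{3}{2}B)$. Choosing $\epsilon$ small enough that $\delta K \leq \tfrac{1}{4}$ secures $|C| \leq A/4 + B/2$ and completes the proof. It is worth emphasizing that a naive application of Lemma \ref{LEM_cross_term_control} would also require bounding $\|Jv^{\hat{\rho}}\|_{g_0}$, which scales like $1/\phi$ and cannot be absorbed; the algebraic cancellation $\xi \subset \ker ds$ (so that $\Omega(v^\xi, Jv^{\hat{\rho}})$ reduces to a multiple of $\Omega(\partial_s, v^\xi)$ free of any $1/\phi$) is precisely what allows the cross terms to be controlled in terms of $A$ and $B$ alone.
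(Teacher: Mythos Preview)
Your proof is correct and follows essentially the same approach as the paper: decompose along $\hat{\rho}\oplus\xi$, establish the identity $A' = \hat{\lambda}(X_H)\cdot A$ by direct computation on $\hat{\rho}$, exploit the cancellation $i_{X_H}\Omega = -ds$ with $\xi\subset\ker ds$ to strip the $1/\phi$ factor from the cross terms, and then absorb those cross terms via Lemma~\ref{LEM_cross_term_control} and the $|\rangle J\langle|\le C_1$ bound. The only differences are cosmetic---you use the basis $(\partial_s,X_H)$ instead of $(\partial_s,\widehat{X})$, apply AM--GM directly on the algebraic quantities $A,B$ rather than converting through $g_J$-norms, and prove only the one-sided inequality rather than the paper's two-sided estimate $|\Omega(v,Jv)-(ds\wedge\hat{\lambda}+\hat{\omega})(v,Jv)|\le\tfrac12(ds\wedge\hat{\lambda}+\hat{\omega})(v,Jv)$.
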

%
The important fact is that \(\varepsilon\) does not depend on \(J\) and
  only on the numerical bound!
\begin{proof}
We first fix $\ell'\in (0,\ell)$. 
This is done since the background metric $g_0$ might not be well-behaved
  near the ends of \(\mathcal{I}_{\ell}\times M\).
We begin by defining the constants \(C_2\) and \(\delta\) by
  \begin{align}\label{EQ_partial_s_norm_0}                                
    C_2:=\sup_{q\in \mathcal{I}_{\ell'}\times M} \|\partial_s\|_{g_0}
    \qquad\text{and}\qquad \delta :=\frac{1}{8 C_2 C_1^2}.
    \end{align}
Let \(\epsilon=\epsilon(\delta, \Omega, \hat{\lambda}, \hat{\omega},
  J_0)\) be the constant guaranteed by Lemma \ref{LEM_cross_term_control}
  and we may assume without loss of generality that
  \(0<\varepsilon\leq \ell'\).
Recall that a consequence of Lemma \ref{LEM_cross_term_control} is that
  for each \(v = v^{\hat{\rho}}  + v^\xi \in T(\mathcal{I}_{\epsilon}
  \times M)\) with \(v^{\hat{\rho}}\in \hat{\rho}\) and \(v^\xi\in \xi\)
  we have
  \begin{align}\label{EQ_Omega_delta}                                     
    |\Omega(v^{\hat{\rho}}, v^\xi)| \leq \delta \|v^{\hat{\rho}}\|_{g_0}
    \|v^\xi\|_{g_0}.
    \end{align}
Also recall that along \(\{0\}\times M\) we have \(\hat{\lambda}(X_H)=1\),
  so that by shrinking \(\epsilon>0\), we can further guarantee that 
  \begin{align}\label{EQ_lambda_near_one}                                 
    \sup_{q\in \mathcal{I}_\epsilon\times M}
    \big|\hat{\lambda}(X_H(q))-1\big| \leq \frac{1}{2^{10}}.
    \end{align}
Next recall that
  \begin{align*}                                                            
    \widehat{X}=\frac{X_H}{\hat{\lambda}(X_H)},
    \qquad
    J \partial_s = {\textstyle \frac{1}{\phi}}
    \widehat{X},
    \qquad\text{and}\qquad
    J\widehat{X} = -\phi
    \partial_s.
    \end{align*}
Additionally, recall that we have the splitting \(T(\mathcal{I}_\epsilon
  \times M) = \hat{\rho}\oplus \xi\), and the associated projections
  \(\pi^{\hat{\rho}}\colon \hat{\rho}\oplus \xi \to \hat{\rho}\) and
  \(\pi^\xi\colon \hat{\rho}\oplus \xi \to \xi\).
In general we will use the abbreviated notation \(v^{\hat{\rho}} =
  \pi^{\hat{\rho}}(v)\) and \(v^\xi = \pi^\xi(v)\).  In the following 
  we work with $\hat{\rho}\oplus\xi\rightarrow
  \mathcal{I}_{\varepsilon}\times M$, where $\varepsilon>0$ is the
  previously chosen number.
We begin with the following claim.
\begin{align}\label{EQ_dslambda_v_Omega}                                  
  (ds\wedge\hat{\lambda})(v^{\hat{\rho}},Jv^{\hat{\rho}}) =
  \hat{\lambda}(X_H) \cdot \Omega(v^{\hat{\rho}}, Jv^{\hat{\rho}})
  \quad\text{ for each }v^{\hat{\rho}}\in\hat{\rho}.
  \end{align}
To prove this, we first write \(v^{\hat{\rho}}= a \partial_s + b
  \widehat{X}\) for some \(a,b\in \mathbb{R}\), and then compute as
  follows.
\begin{align*}                                                            
  \Omega(v^{\hat{\rho}}, Jv^{\hat{\rho}}) 
  &=
  \Omega\big(a\partial_s +b \widehat{X}, J(a \partial_s + b
  \widehat{X})\big)
  \\
  &=
  \Omega\big(a\partial_s +b \widehat{X}, {\textstyle \frac{1}{\phi}} a
  \widehat{X}  - \phi b\partial_s \big)
  \\
  &=
  \Big(\frac{a^2}{\phi} + \phi b^2\Big) \; \Omega(\partial_s ,\widehat{X})
  \\
  &=
  \Big(\frac{a^2}{\phi} + \phi b^2\Big) \; \frac{\Omega(\partial_s
  ,X_H)}{\hat{\lambda}(X_H)}
  \\
  &=
  \Big(\frac{a^2}{\phi} + \phi b^2\Big) \; \frac{d
  H(\partial_s)}{\hat{\lambda}(X_H)}
  \\
  &=
  \Big(\frac{a^2}{\phi} + \phi b^2\Big) \; \frac{d
  s(\partial_s)}{\hat{\lambda}(X_H)}
  \\
  &=
  \Big(\frac{a^2}{\phi} + \phi b^2\Big)\frac{1}{\hat{\lambda}(X_H)} .
  \end{align*}
Similarly, we compute the following (with the same \(v^{\hat{\rho}}\in
  \hat{\rho}\) as above).
  \begin{align*}                                                          
    (ds \wedge \hat{\lambda}) (v^{\hat{\rho}}, Jv^{\hat{\rho}}) 
    &=
    (ds \wedge \hat{\lambda})\big(a\partial_s +b \widehat{X}, J(a \partial_s + b
    \widehat{X})\big)
    \\
    &=
    (ds \wedge \hat{\lambda})\big(a\partial_s +b \widehat{X}, {\textstyle
    \frac{1}{\phi}} a \widehat{X} - \phi b\partial_s \big)
    \\
    &=
    \Big(\frac{a^2}{\phi} + \phi b^2\Big) 
    \\
    &=
    \hat{\lambda}(X_H) \; \Omega(v^{\hat{\rho}}, Jv^{\hat{\rho}}).
    \end{align*}
This establishes equation (\ref{EQ_dslambda_v_Omega}).
The case with \(\hat{\omega}\) is much easier.
Indeed, recall that by definition we have \(\hat{\omega} = \Omega \circ
  (\pi^\xi\times \pi^\xi)\).
It immediately then follows that
  \begin{align}\label{EQ_omega_vs_Omega}                                  
    \hat{\omega}(v^\xi,Jv^\xi) = \Omega(v^\xi,Jv^\xi)\qquad \text{for each
    }v^\xi\in \xi.
    \end{align}
In just a moment we will be concerned with estimating cross terms, however
  first we will need an elementary estimate.  Starting with
  \begin{align*}                                                          
    \|\partial_s\|_{g_J}^2 
    \; \; =\; \; 
    (ds \wedge \hat{\lambda})(\partial_s, J\partial_s)
    \; \; = \; \; 
   ( ds \wedge \hat{\lambda})(\partial_s, {\textstyle
    \frac{1}{\phi}}\widehat{X})
    \; \; = \; \; 
    {\textstyle \frac{1}{\phi}}.
    \end{align*}
    and combining the above  with the definition of \(C_2\) in equation
  (\ref{EQ_partial_s_norm_0})  yields
  \begin{align}\label{EQ_partial_s_estimate}                              
    \|\partial_s\|_{g_0}^2 \leq C_2^2 = \phi C_2^2 \|\partial_s\|_{g_J}^2
    \leq C_2^2 \|\partial_s\|_{g_J}^2.
    \end{align}
We are now prepared to estimate cross terms.
To that end, we let \(v = v^{\hat{\rho}}+ v^\xi \in T(\mathcal{I}_\epsilon
  \times M)\) with \(v^{\hat{\rho}} = a \partial_s +b \widehat{X} \in
  \hat{\rho}\) and \(v^\xi\in \xi\).
Then
\begin{align*}                                                            
  |\Omega(v^{\hat{\rho}}, J v^\xi)| 
  &= 
  |\Omega(a \partial_s +b \widehat{X}, J v^\xi)| 
  \\ 
  &= 
  \Big|\Omega(a \partial_s, J v^\xi)+b\frac{\Omega(X_H, J
  v^\xi)}{\hat{\lambda}(X_H)}\Big|
  \\ 
  &= 
  \Big|\Omega(a \partial_s, J v^\xi)-b\, \frac{d H (J
  v^\xi)}{\hat{\lambda}(X_H)}\Big|
  \\ 
  &= 
  \Big|\Omega(a \partial_s, J v^\xi)-b\, \frac{d s (J
  v^\xi)}{\hat{\lambda}(X_H)}\Big|
  \\ 
  &= 
 \Big |\Omega(a\partial_s, J v^\xi)\Big|  & & \text{by equation (\ref{EQ_xi}),
  and } J:\xi\to \xi
  \\ 
  &\leq 
  \delta \|a\partial_s\|_{g_0} \| J v^\xi \|_{g_0} & & \text{by equation 
  (\ref{EQ_Omega_delta})} 
  \\ 
  &\leq 
  \delta C_1\|a\partial_s\|_{g_0} \|v^\xi\|_{g_0} & & \text{by equation
  (\ref{EQ_Jg0_g0})}  
  \\ 
  &\leq 
  \delta C_1^2\|a\partial_s\|_{g_0} \|v^\xi\|_{g_J} & & \text{by equation
  (\ref{EQ_g0_gJ})} 
  \\
  &\leq 
  \delta C_1^2 C_2 \|a \partial_s\|_{g_J} \|v^\xi\|_{g_J} &&
  \text{by equation (\ref{EQ_partial_s_estimate})}  %
  \\
  &=
  {\textstyle \frac{1}{8}} \|a \partial_s\|_{g_J} \|v^\xi\|_{g_J} &&
  \text{by equation (\ref{EQ_partial_s_norm_0})}
  \\
  &\leq 
  {\textstyle \frac{1}{8}} \big(\|a \partial_s\|_{g_J}^2 +
  \|v^\xi\|_{g_J}^2\big)
  \\
  &\leq 
  {\textstyle \frac{1}{8}} \big(\|a \partial_s\|_{g_J}^2 + \|b
  \widehat{X}\|_{g_J}^2
  + \|v^\xi\|_{g_J}^2\big)
  \\
  &\leq 
  {\textstyle \frac{1}{8}} \big(\|v^{\hat{\rho}}\|_{g_J}^2 +
  \|v^\xi\|_{g_J}^2\big)
  \\
  &=
  {\textstyle \frac{1}{8}}\|v\|_{g_J}^2
  \\
  &=
  {\textstyle \frac{1}{8}}(ds\wedge \hat{\lambda} +\hat{\omega})(v, Jv).
  \end{align*}
The other cross term is estimated rather similarly.  
Again let \(v = v^{\hat{\rho}}+ v^\xi \in T(\mathcal{I}_\epsilon \times
  M)\) with \(v^{\hat{\rho}} = a \partial_s +b \widehat{X} \in
  \hat{\rho}\) and let \(v^\xi\in \xi\).
We estimate:
\begin{align*}                                                            
  |\Omega(v^\xi, Jv^{\hat{\rho}})|
  &=
  \big|\Omega\big(v^\xi, J(a\partial_s +b \widehat{X})\big)\big|
  \\
  &=
  \big|\Omega(v^\xi, {\textstyle \frac{a}{\phi}}\widehat{X} - b\phi
  \partial_s)\big|
  \\ 
  &=
  \big|\Omega(v^\xi, b\phi \partial_s)\big|
  \\ 
  &\leq 
  \delta \|v^\xi\|_{g_0} \|b\phi \partial_s\|_{g_0}
  \\ 
  &\leq 
  \delta C_1 C_2 \phi \|v^\xi\|_{g_J} \|b \partial_s\|_{g_J}
  \\ 
  &\leq 
  {\textstyle \frac{1}{8}}  \|v^\xi\|_{g_J} \|b \partial_s\|_{g_J}
  \\ 
  &\leq 
  {\textstyle \frac{1}{8}}  \big( \|v^\xi\|_{g_J}^2+  \|b
  \partial_s\|_{g_J}^2)
  \\
  &\leq 
  {\textstyle \frac{1}{8}}  \big( \|v^\xi\|_{g_J}^2+
  \|v^{\hat{\rho}}\|_{g_J}^2)
   \\ 
  &\leq 
  {\textstyle \frac{1}{8}} \|v\|_{g_J}^2  
   \\ 
  &=
  {\textstyle \frac{1}{8}} (ds\wedge \hat{\lambda} +\hat{\omega})(v, Jv) 
 \end{align*}
With these two estimates established, we can now use them to establish the
  following.

\begin{align*}                                                            
  \big|\Omega(v, Jv) &- (ds \wedge \hat{\lambda}
  +\hat{\omega})(v,Jv)\big|\\
  &=
  \big|\Omega\big(v^{\hat{\rho}} + v^\xi, J(v^{\hat{\rho}} + v^\xi)\big) -
  (ds \wedge \hat{\lambda} +\hat{\omega})\big(v^{\hat{\rho}} + v^\xi,
  J(v^{\hat{\rho}} + v^\xi)\big)\big|
  \\
  &=
  \big|\Omega\big(v^{\hat{\rho}} + v^\xi, J(v^{\hat{\rho}} + v^\xi)\big) -
  (ds \wedge \hat{\lambda}) (v^{\hat{\rho}}, Jv^{\hat{\rho}})
  -\hat{\omega} (v^\xi, J v^\xi)\big|
  \\
  &\leq
  |\Omega(v^{\hat{\rho}}, Jv^{\hat{\rho}}) - ds\wedge
  \hat{\lambda}(v^{\hat{\rho}}, J v^{\hat{\rho}})|
  +
  |\Omega(v^{\xi}, Jv^{\xi}) - \hat{\omega}(v^{\xi}, J v^{\xi})| 
  \\
  &\qquad
  +\big|\Omega(v^\xi, Jv^{\hat{\rho}}) \big|
  +
  \big|\Omega(v^{\hat{\rho}}, J v^\xi) \big|
  \\
  &\leq
  |\Omega(v^{\hat{\rho}}, Jv^{\hat{\rho}}) - (ds\wedge
  \hat{\lambda})(v^{\hat{\rho}}, J v^{\hat{\rho}})|
  +\big|\Omega(v^\xi, Jv^{\hat{\rho}}) \big|
  +
  \big|\Omega(v^{\hat{\rho}}, J v^\xi) \big|
  \\
  &\leq
  |\Omega(v^{\hat{\rho}}, Jv^{\hat{\rho}}) - (ds\wedge
  \hat{\lambda})(v^{\hat{\rho}}, J v^{\hat{\rho}})|
  + {\textstyle \frac{1}{4}} (ds\wedge \hat{\lambda} +\hat{\omega})(v, Jv) 
  \\
  &=
  \Big|\frac{1}{\hat{\lambda}(X_H)}(ds\wedge \hat{\lambda})(v^{\hat{\rho}},
  J v^{\hat{\rho}}) - (ds\wedge \hat{\lambda})(v^{\hat{\rho}}, J
  v^{\hat{\rho}})\Big|
  + {\textstyle \frac{1}{4}}  (ds\wedge \hat{\lambda} +\hat{\omega})(v,
  Jv)
  \\
  &\leq
  \Big|\hat{\lambda}(X_H)^{-1}-1 \Big|  (ds\wedge
  \hat{\lambda}+\hat{\omega})(v, J v)
  + {\textstyle \frac{1}{4}}  (ds\wedge \hat{\lambda} +\hat{\omega})(v, Jv) 
  \\
  &\leq
  {\textstyle \frac{1}{2}} (ds\wedge \hat{\lambda} +\hat{\omega})(v, Jv) 
  \end{align*}
In other words, we have shown that 
  \begin{align*}                                                          
    \big|\Omega(v, Jv) - (ds \wedge \hat{\lambda} +\hat{\omega})(v,
    Jv)\big| &\leq {\textstyle \frac{1}{2}} (ds\wedge \hat{\lambda}
    +\hat{\omega})(v, Jv)
    \end{align*}
  and thus
  \begin{align*}                                                          
    (ds\wedge \hat{\lambda} +\hat{\omega})(v, Jv) \leq 2 \Omega(v, Jv)
    \end{align*}
  for all \(v\in T(\mathcal{I}_\epsilon\times M)\).
\end{proof}

With the above estimates established, we can now turn our attention to the
  main result of this section. 
Note that the metric \(\tilde{g}\) occurring below is a translation
  invariant metric on ${\mathbb R}\times M$ associated to a metric on \(M\).  
The norms $\|{\cdot}\|_{\mathcal{C}^n}$ use this metric.

\renewcommand\thesection{\arabic{section}}
\setcounter{CurrentSection}{\value{section}}
\setcounter{CurrentProposition}{\value{lemma}}
\setcounter{lemma}{\value{CounterPropositionAdiabatic}}
\setcounter{section}{\value{CounterSectionAdiabatic}}
\begin{proposition}[adiabatically degenerating constructions]
  \hfill\\
Let \(\big(\mathcal{I}_{\ell}\times M, \Omega, \hat{\lambda}\big)\) be a
  framed Hamiltonian energy pile, and let \(\hat{\rho}\), \(\xi\), and
  \(\hat{\omega}\) be the associated structures defined in Section
  \ref{SEC_background}, and assume, as in the conclusions of Lemma
  \ref{LEM_localization_to_framed_Hamiltonian_energy_pile}, that along
  \(\{0\}\times M\) the sub-bundles \(\hat{\rho}\) and \(\xi\) are
  symplectic complements and \(\hat{\lambda}(X_H)=1\). 
Let \(\widehat{X}=X_H/\hat{\lambda}(X_H)\) as above and denote by
\(\|\cdot \|_{\mathcal{C}^n}\) the
  \(\mathcal{C}^n\)-norm on \(\mathbb{R}\times M\) with respect to the
  auxiliary translation invariant metric \(\tilde{g}\), and the \(\Psi_k\)
  are the embeddings as in equation (\ref{EQ_Psi_new}).
Let \(\{\check{J}_k\}_{k\in \mathbb{N}}\) be a sequence of almost
  complex structures on \(\mathcal{I}_\ell \times M\) which satisfy the
  following conditions.
\begin{enumerate}[(D1)]                                                   
  \item 
  \(\check{J}_k\colon \hat{\rho} \to \hat{\rho}\; \) and  \(\;
  \check{J}_k\colon \xi \to \xi\) for each \(k\in \mathbb{N}\),
  \item 
  \((ds\wedge \hat{\lambda} +\hat{\omega})\circ ({\rm Id}\times
  \check{J}_k)\) is a Riemannian metric for each \(k\in \mathbb{N}\),
  \item 
  there exist constants  \(\{C_n'\}_{n\in \mathbb{N}}\) such that
  \begin{align*}                                                          
    \sup_{k\in \mathbb{N}} \|\check{J}_k\|_{\mathcal{C}^n} \leq C_n'
    \end{align*}
  \end{enumerate}
Then there exists an \(\epsilon>0\) with the following significance.
For any sequence of functions \(\phi_k : (-\epsilon, \epsilon)\to (0, 1]\)
  which converge in \(\mathcal{C}^\infty\), the weakly adapted almost
  complex structures defined by
  \begin{align}\label{EQ_def_Jk}                                          
    \phi_k \cdot J_k \partial_s = \widehat{X}\qquad\text{and}\qquad
    J_k\big|_\xi := \check{J}_k\big|_\xi
    \end{align}
  satisfy the following properties
\begin{enumerate}[(E1)]                                                   
  \item For each \(v\in T(\mathcal{I}_{\epsilon}\times M)\) we have
  \begin{align*}                                                          
    (ds \wedge \hat{\lambda} + \hat{\omega})(v, J_kv )\leq 2 \Omega(v ,
    J_k v).
    \end{align*}
  \item 
  There exists a sequence \(\{K_n\}_{n=0}^\infty\) of positive
  constants so that with respect to the auxiliary translation invariant
  metric \(\tilde{g}\) on
  \(\mathbb{R}\times M\) it holds that
  \begin{align*}                                                          
    \sup_{k\in \mathbb{N}} \| (\Psi_k)_* J_k \|_{\mathcal{C}^n} \leq K_n
    \end{align*}
  for each \(n\in \mathbb{N}\). 
  \end{enumerate}
In particular, if \(\epsilon' \in (0, \epsilon)\) and
  \(\phi_k\big|_{(-\epsilon', \epsilon')}\to 0\), then the almost complex
  structures \(J_k\) are adiabatically degenerating on
  \(\mathcal{I}_{\epsilon'}\times M\) in the sense of Definition
  \ref{DEF_adiabatically_degenerating_almost_complex_structures}, and
  on \(\mathcal{I}_{\epsilon}\times M\) these almost complex structures are
  all tame.
\end{proposition}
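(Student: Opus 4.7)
The plan is to split the argument into three tasks: (i) verifying that each \(J_k\) is a weakly adapted almost complex structure in the sense of Definition \ref{DEF_weakly_adapted_J}, (ii) deducing (E1) by a reduction to Proposition \ref{PROP_metric_symplectic}, and (iii) verifying the uniform \(C^n\)-bounds (E2) by a direct push-forward computation. The concluding ``in particular'' assertion follows essentially for free from (E1) and (E2). The main obstacle is the uniform (in \(k\)) bound on the quantity \(|\rangle J_k \langle|\) needed to apply Proposition \ref{PROP_metric_symplectic}.

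For (i), conditions (J1) and (J2) are immediate from (D1) and the defining equation \eqref{EQ_def_Jk}. For (J3), I would split \(v=v^{\hat{\rho}}+v^\xi\) (with \(v^{\hat{\rho}}=a\partial_s+b\widehat X\)) and compute, exactly as in the proof of Proposition \ref{PROP_metric_symplectic}, that the \((\hat{\rho},\hat{\rho})\)-contribution to \(g_{J_k}\) equals \(a^2/\phi_k+\phi_k b^2\) and the \((\xi,\xi)\)-contribution equals \(\hat\omega(v^\xi,\check J_k v^\xi)>0\) by (D2); the cross terms vanish because \(J_k v^\xi\in\xi\subset\ker(ds)\cap\ker(\hat\lambda)\) and \(\pi^\xi v^{\hat{\rho}}=0\). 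For (ii), since \(|\rangle J_k\langle|\) depends only on \(J_k|_\xi=\check J_k|_\xi\), three of the four ratios in \eqref{EQ_contrived} are bounded uniformly in \(k\) and in the point by the \(C^0\) bound of (D3) together with the identity \(\check J_k^2=-\mathrm{Id}\) and pointwise bounds on \(\hat\omega\). The main obstacle is the matching uniform lower bound \(\hat\omega(v^\xi,\check J_k v^\xi)\geq c\,\|v^\xi\|_{g_0}^2\); I would establish this by a pointwise linear-algebra argument on each symplectic vector space \((\xi_q,\hat\omega_q)\): the symmetric operator \(A_k\) representing \(\hat\omega(\cdot,\check J_k\cdot)\) via \(g_0\) has determinant fixed by \(\check J_k^2=-\mathrm{Id}\) (so independent of \(k\)) and trace controlled by \(\|\check J_k\|_{g_0}\leq C_0'\), which forces its smallest eigenvalue to be bounded below by a constant depending only on \(C_0'\) and the pointwise size of \(\hat\omega\) and \(g_0\); since these are fixed background data and \(\overline{\mathcal I_{\ell'}}\times M\) is compact for any \(\ell'<\ell\), the resulting \(c\) is uniform. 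With \(|\rangle J_k\langle|\leq C_1\) in hand, Proposition \ref{PROP_metric_symplectic} supplies an \(\epsilon>0\) (depending on \(C_1\) and the fixed background data, but independent of the sequence \(\phi_k\)) for which (E1) holds.

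For (iii), an explicit chain-rule computation using \(\Psi_k(s,p)=(\psi_k(s),p)\) with \(\psi_k'=1/\phi_k\) shows that \((\Psi_k)_* J_k\) sends \(\partial_{\check s}\mapsto\widehat X\) and \(\widehat X\mapsto -\partial_{\check s}\) (the \(\phi_k\) factors cancel entirely), while on \(\xi\) it agrees with \(\check J_k\) read off at the preimage point \(\psi_k^{-1}(\check s)\). Differentiation in \(\check s\) then brings down only factors of \(\phi_k\) and its derivatives (via the chain rule for \(\psi_k^{-1}\)), all of which are uniformly \(C^\infty\)-bounded since \(\{\phi_k\}\) is \(C^\infty\)-convergent and \(\phi_k\leq 1\); derivatives in the \(M\)-direction are controlled by (D3) and by the fixed smooth structure of the energy pile. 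This produces the uniform constants \(K_n\) of (E2). Finally, the ``in particular'' claim is automatic: on the full \(\mathcal I_\epsilon\times M\), (E1) reads \(g_{J_k}(v,v)\leq 2\,\Omega(v,J_k v)\), which is \(\Omega\)-tameness of \(J_k\); and on \(\mathcal I_{\epsilon'}\times M\) the hypothesis \(\phi_k\to 0\) supplies \((\mathcal{J}1)\) of Definition \ref{DEF_adiabatically_degenerating_almost_complex_structures}, while \((\mathcal{J}2)\) is (E1) and \((\mathcal{J}3)\) is (E2).
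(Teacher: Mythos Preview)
Your proof is correct and follows essentially the same route as the paper's: bound \(|\rangle J_k\langle|\) uniformly using (D3), invoke Proposition~\ref{PROP_metric_symplectic} to obtain \(\epsilon\) and (E1), and then check (E2) by a direct push-forward computation. The paper is terse at the one nontrivial point---it simply asserts that \(C_1:=\sup_k|\rangle \check J_k\langle|\) is finite ``because of hypothesis (D3)''---whereas you actually supply the missing lower bound on \(\hat\omega(v^\xi,\check J_k v^\xi)\) via the determinant/eigenvalue argument, which is a nice addition.
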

%
\begin{proof}
\setcounter{section}{\value{CurrentSection}}
\setcounter{lemma}{\value{CurrentProposition}}
\renewcommand\thesection{\Alph{section}}

Fix an auxiliary \(\Omega\)-compatible almost complex structure \(J_0\) on
  \(\mathcal{I}_\ell\times M\), and let \(g_0\) be the associated
  background Riemannian
  metric \(g_0=\Omega\circ ({\rm Id}\times J_0)\).
Pick an \(\ell'\in (0,\ell) \) and consider the sequence \((\check{J}_k)\).  
We define 
  \begin{align*}                                                          
    C_1 := \sup_{k\in \mathbb{N}} |\rangle \check{J}_k\langle
    |_{(\mathcal{I}_{\ell'} \times M, \Omega, \hat{\lambda}, \hat{\omega},   
    J_0)}
    \end{align*}
  where \(|\rangle J \langle|\) is defined as in equation
  (\ref{EQ_contrived}).
We note that \(C_1 \) is finite because of hypothesis (D3).  
We then apply Proposition \ref{PROP_metric_symplectic} which guarantees
  the existence of an \(\epsilon>0\) for which (E1) holds for any weakly
  adapted almost complex structures \(J_k\) for which \(J_k\big|_\xi =
  \check{J}_k\big|_\xi\).

To establish (E2), assume that the \(\phi_k\) have been fixed so they
  converge in \(\mathcal{C}^\infty\).
Define the \(J_k\) as in equation (\ref{EQ_def_Jk}).
To estimate the norms of the \((\Psi_k)_* J_k\) we first note that it is
  sufficient to establish \(\mathcal{C}^n\) bounds for
  \begin{align*}                                                          
    ((\Psi_k)_* J_k)\partial_{\check{s}} \qquad\text{and}\qquad
    ((\Psi_k)_* J_k)\big|_\xi,
    \end{align*}
  however each of these bounds are immediately obtained as a consequence
  of the fact that the \(\phi_k\) are converging in \(\mathcal{C}^\infty\)
  together with hypothesis (D3).
The remaining desired conclusions are immediate.
This completes the proof of Proposition
  \ref{PROP_adiabatically_degenerating_constructions}. 

\end{proof}

\renewcommand{\thesection}{\Alph{section}}

\subsection{Extra Definitions}\label{SEC_extra_definitions}

For the convenience of the reader, we provide a few definitions here which
  are used indirectly. The reader should consult \cite{FH2}, which contains further discussions of this concept.
  
\begin{definition}[realized Hamiltonian homotopy]
  \label{DEF_hamiltonian_homotopy}
  \hfill\\
Let \(M\) be a smooth (odd-dimensional) closed manifold, let
  \(\mathcal{I}\subset \mathbb{R}\) be an interval equipped with the
  coordinate \(t\), and let \(\hat{\lambda}\) and \(\hat{\omega}\)
  respectively be a one-form and two-form on \(\mathcal{I}\times M\).
We say \((\mathcal{I}\times M, (\hat{\lambda}, \hat{\omega}))\) is a
  realized Hamiltonian homotopy provided the following hold.
\begin{enumerate}                                                        
  \item \(\hat{\lambda}(\partial_t)=0 \).
  \item \(i_{\partial_t} \hat{\omega}=0\).
  \item \(d \hat{\omega}\big|_{\{t={\rm const}\}} = 0\)
  \item \(dt\wedge \hat{\lambda}\wedge \hat{\omega}\wedge \cdots \wedge
    \hat{\omega} >0\).
  \item \(\hat{\lambda}\) is invariant under the flow of \(\partial_t\)
  \item if \(\mathcal{I}\) is unbounded, then there exists a neighborhood
    of \(\{\pm\infty\}\times M\) on  which
    \(\hat{\omega}\) is invariant under the flow of \(\partial_t\).
  \end{enumerate}
\end{definition}
%

\begin{definition}[adapted structures for a realized Hamiltonian
  homotopy]
  \label{DEF_adapted_structures_Ham_homotopy}
  \hfill\\
Let \((\mathcal{I}\times M , (\hat{\lambda}, \hat{\omega}))\) be a
  realized Hamiltonian homotopy.
We say an almost Hermitian structure \((\widehat{J}, \hat{g})\) on
  \(\mathcal{I}\times M\) is adapted to this realized Hamiltonian homotopy
  provided the following hold.
\begin{enumerate}                                                         
  \item \(\widehat{J}\partial_t = \widehat{X}\).
  \item \(\widehat{J}\colon \hat{\xi}\to \hat{\xi}\).
  \item \(\hat{g} = (dt \wedge \hat{\lambda} + \hat{\omega})(\cdot,
    \widehat{J} \cdot) \).
  \item if \(\mathcal{I}\) is unbounded, then there exists a neighborhood
    of \(\{\pm\infty\}\times M\) on which the restriction
    \(\widehat{J}\big|_{\hat{\xi}}\) is invariant under
    the flow of \(\partial_t\).
  \end{enumerate}
\end{definition}
%

\bibliographystyle{plain}
\bibliography{bibliography}{}

\end{document}